\newtheorem{theorem}{Theorem}
\newtheorem{lemma}[theorem]{Lemma}
\newtheorem{proposition}[theorem]{Proposition}
\newtheorem{corollary}[theorem]{Corollary}
\theoremstyle{definition}
\newtheorem{remark}[theorem]{Remark}
\theoremstyle{definition}
\newtheorem{example}[theorem]{Example}
\theoremstyle{definition}
\newtheorem{definition}[theorem]{Definition}
\def\AA{{\mathcal A}}
\def\MM{{\mathcal M}}
\def\MMM{\overline{\mathcal{M}}}
\DeclareMathOperator{\D}{\bf{D}}
\def\AAA{{ \mathscr{A}}}
\DeclareMathOperator{\SO}{SO}
\DeclareMathOperator{\PSL}{PSL}
\DeclareMathOperator{\flip}{flip}
\DeclareMathOperator{\Ann}{Ann}
\def\PP{{\mathscr P}_{q}}
\def\bl{\textbf{bl}}
\def\MMMM{{\mathcal M}_{0, \lowercase{n}}}
\begin{document}
\title[Compactifications of $\MMMM$ associated with Alexander self dual complexes]{Compactifications of $\MMMM$ associated with Alexander self dual complexes: Chow ring, $\psi$-classes, and intersection numbers}
\author{Ilia Nekrasov$^1$}
\email{geometr.nekrasov@yandex.ru}
\address{$^1$Chebyshev laboratory, Department of Math. and Mech., St. Petersburg State University}
\author{Gaiane Panina$^2$}
\email{gaiane-panina@rambler.ru}
\address{$^2$PDMI RAS, St. Petersburg State University}

\begin{abstract}
An  \textit{Alexander self-dual complex} gives rise to a compactification of $\mathcal{M}_{0,n}$, called \textit{ ASD compactification}, which is a smooth algebraic variety. ASD compactifications include (but are not exhausted by) the \textit{polygon spaces}, or the moduli spaces of flexible polygons. We present an explicit description of the Chow rings of ASD compactifications. We study the analogs of Kontsevich's tautological bundles, compute their Chern classes,  compute top intersections of the Chern classes, and derive a recursion for the intersection numbers.
\end{abstract}

\maketitle

\keywords{Alexander self dual complex, modular compactification, tautological ring, Chern class, Chow ring}


\section{Introduction}

The moduli space  of $n$-punctured rational curves $\MM_{0,n}$ and its various compactifications  is a classical object, bringing together algebraic geometry, combinatorics, and topological robotics. Recently, D.I.Smyth \cite{Smyth13} classified all  \textit{modular compactifications} of $\MM_{0,n}$. We make use of an interplay between different compactifications, and:

\begin{itemize}
\item describe the classification in terms of (what we call) \textit{preASD simplicial complexes};

\item describe the Chow rings of the  compactifications arising from Alexander self-dual complexes (ASD compactifications);

\item compute for ASD compactifications the associated Kontsevich's \textit{$\psi$-classes}, their top monomials,  and give a recurrence relation for the top monomials.
\end{itemize}

Oversimplifying, the main approach is as follows. Some (but not all) compactifications are the  well-studied \textit{polygon spaces}, that is, moduli spaces of flexible polygons. A polygon space corresponds to a \textit{threshold} Alexander self-dual complex. Its cohomology ring (which equals the Chow ring) is known due to  J.-C. Hausmann and A. Knutson \cite{HKn}, and  A. Klyachko\cite{Kl}. The paper \cite{NP} gives a computation-friendly presentation of the ring. Due to Smyth \cite{Smyth13}, all the modular compactifications correspond to \textit{preASD complexes}, that is, to those complexes that are contained in an ASD complex. A removal of a facet of a preASD complex amounts to a blow up  of the associated compactification. Each  ASD compactification is achievable from a threshold ASD compactification by a sequence of blow ups  and blow downs. Since the changes in the Chow ring are controllable, one can start with a polygon space, and then (by elementary steps) reach any of the ASD compactifications and describe its Chow ring (Theorem \ref{ASDChow}).

M. Kontsevich's $\psi$-classes \cite{Kon} arise here in a standard way. Their computation of  is a mere modification of the Chern number count for the tangent bundle over $\mathbb{S}^2$ (a classical exercise in  a topology course). The recursion  (Theorem \ref{ThmRecursion}) and the top monomial counts (Theorem \ref{main_theorem}) follow.

 It is worthy mentioning that a disguised  compactification by simple games, i.e., ASD complexes, is discussed from a combinatorial viewpoint in \cite{PanGal}.

\bigskip

Now let us give a very brief overview of moduli compactifications of $\MM_{0,n}$. A compactification by a smooth variety is very desirable since it makes intersection theory applicable. We also expect that
(1) a compactification is \textit{modular}, that is, itself  is the moduli space  of some curves and marked points lying on it, and (2) 
 the complement of $\MM_{0,n}$ (the “boundary”) is a  divisor.

The space  $\MM_{0,n}$ is viewed as the configuration space of $n$ distinct marked points (``particles'') living in the complex projective plane. The space $\MM_{0,n}$ is non-compact due to forbidden collisions of the marked points. Therefore, each compactification should suggest an answer to the question: what happens when two (or more) marked points  tend  to each other?  There exist two possible choices: either one allows some (not too many!) points to coincide, either one applies a blow up. It is important that the blow ups amount to adding points that correspond to $n$-punctured nodal curves of arithmetic genus zero.

A compactification obtained by blow ups only is the celebrated \textit{Deligne--Mumford compactification}. If one avoids blow ups and allows (some carefully chosen) collections of points to coincide, one gets an ASD-compactification; among them are the \textit{polygon spaces}. Diverse combinations of these two options (in certain cases one allows points to collide, in other cases one applies a blow up) are also possible; the complete classification is due to \cite{Smyth13}.

Now let us be more precise and look at the compactifications in more detail.

\subsection{Deligne--Mumford compactification}\label{DMcom}

\begin{definition}\cite{DM69}
Let $B$ be a scheme. A \textit{family of rational nodal curves with $n$ marked points} over $B$ is
\begin{itemize}
\item a flat proper morphism $\pi: C \rightarrow B$ whose geometric fibers $E_{\bullet}$ are nodal connected curves of arithmetic genus zero, and
\item a set of sections $(s_{1}, \dots, s_{n})$ that do not intersect nodal points of geometric fibers. \newline
In this language, the sections correspond to marked points. The above condition means that a nodal point of a curve may not be marked.
\end{itemize}
A family $(\pi:C \rightarrow B; s_{1}, \dots, s_{n})$ is \textit{stable} if the divisor $K_{\pi}+s_{1}+\dots+s_{n}$ is $\pi$-relatively ample.
\medskip

Let us rephrase this condition: a family $(\pi:C \rightarrow B; s_{1}, \dots, s_{n})$ is  {stable} if each irreducible component of each geometric fiber has at least three \textit{special points} (nodal points and points of the sections $s_{i}$). 
\end{definition}

\begin{theorem} \cite{DM69}
(1) There exists a smooth and proper over $\mathbb{Z}$ stack $\MMM_{0,n}$, representing the moduli functor of stable rational curves. Corresponding moduli scheme is a smooth projective variety over $\mathbb{Z}$.

(2) The compactification equals the  moduli space for $n$-punctured stable curves of arithmetic genus zero with $n$ marked points. A stable  curve is a curve of arithmetic genus zero   with at worst nodal singularities and finite automorphism group. This means that (i) every irreducible component has at least three marked or nodal points, and (ii) no marked point is a nodal point. 
\end{theorem}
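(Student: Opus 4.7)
The plan is to construct $\MMM_{0,n}$ inductively via Knudsen's stabilization procedure, which is substantially cleaner in genus zero than the general GIT route. The base case $\MMM_{0,3} = \Spec \mathbb{Z}$ is immediate, since $\PSL_2$ acts simply transitively on ordered triples of distinct points of $\mathbb{P}^1$. For the inductive step I would identify $\MMM_{0,n+1}$ with the universal family over $\MMM_{0,n}$: given a point $[C; p_1,\dots,p_n] \in \MMM_{0,n}$ and a choice of $q \in C$, I produce a stable $(n+1)$-pointed curve by setting $p_{n+1} = q$ when $q$ is a smooth unmarked point, and by blowing up $C$ at $q$ and attaching a new $\mathbb{P}^1$ component carrying the colliding markings when $q$ coincides with a marked point or a node. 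This procedure manifestly preserves smoothness and projectivity of the total space, so the induction settles both simultaneously, provided the construction works in families.

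Smoothness and the dimension count $\dim \MMM_{0,n} = n - 3$ would also be verified directly via deformation theory: for a stable nodal rational curve $(C, p_1,\dots,p_n)$, the nodes deform unobstructedly (locally $xy = t$), and obstructions to pointed deformations live in $\mathrm{Ext}^1$-groups on a curve that vanish for dimension reasons. Properness is then the valuative criterion: given a family of smooth $n$-pointed rational curves over a punctured disk, I would take a flat limit in some ambient projective space and iteratively contract unstable components to produce a stable extension. The key observation is that in genus zero this stable limit exists and is unique \emph{on the nose}, without any ramified base change, which is precisely why $\MMM_{0,n}$ is a fine moduli space rather than only a stack with nontrivial isotropy.

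Part (2) is then essentially a restatement of the construction: conditions (i) that every irreducible component carries at least three special points and (ii) that no marked point is nodal are exactly what is needed for $K_\pi + s_1 + \dots + s_n$ to be $\pi$-relatively ample and for every automorphism group to be trivial, so the moduli functor described matches the one the stack represents.

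The main obstacle I foresee is globalizing Knudsen's contraction/stabilization in families and checking it descends to a morphism of stacks: one must blow up a relative curve along sections that may pass through the nodal locus, and verifying that the resulting family is flat and that the construction commutes with arbitrary base change is delicate. Once this is in place, projectivity follows inductively, because each forgetful morphism $\MMM_{0,n+1} \to \MMM_{0,n}$ is projective and the base is projective by the inductive hypothesis; alternatively one could argue directly that the log canonical class $K + \sum s_i$ descends to an ample line bundle on the coarse space.
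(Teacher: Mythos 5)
The statement you are asked to prove is not proved in the paper at all: it is quoted as background from \cite{DM69} (with the pointed, genus-zero, fine-moduli refinement really going back to Knudsen and Keel), so there is no internal argument to compare yours against. Judged on its own terms, your outline follows the standard Knudsen route (induction on $n$, identification of $\MMM_{0,n+1}$ with the universal curve, deformation theory for smoothness, genus-zero stable reduction for properness), and that route does work; but as written the proposal defers exactly the step that constitutes the theorem. The identification of $\MMM_{0,n+1}$ with the universal family, and more generally the stabilization and contraction operations on flat families of pointed nodal curves together with their compatibility with arbitrary base change, is not an "obstacle to be checked later" --- in the inductive approach it \emph{is} the proof, since representability of the functor, the existence of the forgetful/section morphisms, and projectivity all hinge on it. Flagging it without an argument leaves the core of parts (1) unestablished.

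Two further points need more care than you give them. First, "manifestly preserves smoothness" of the total space is not automatic: the universal curve has nodal fibers, and smoothness of its total space over $\mathbb{Z}$ requires knowing the family is versal at each node (locally $xy=t$ with $t$ a parameter transverse to the boundary divisor); this is a statement to prove, not an observation. Likewise properness via the valuative criterion needs uniqueness of the stable limit (separatedness) in addition to existence, and the claim that no ramified base change is needed in genus zero deserves a sentence (a family of smooth rational curves with a section over the punctured disc is a trivial $\mathbb{P}^1$-bundle after shrinking, so the limit can be taken inside $\mathbb{P}^1\times\Delta$ and then stabilized). Second, for part (2) you correctly match conditions (i) and (ii) with relative ampleness of $K_\pi+s_1+\dots+s_n$, but the relevant extra fact --- that for genus-zero nodal pointed curves finiteness of the automorphism group forces it to be trivial --- is what upgrades the stack to a fine moduli scheme, and it should be stated explicitly rather than folded into "essentially a restatement of the construction."
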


The Deligne-Mumford compactification has  a natural stratification by stable trees with $n$ leaves. A \textit{stable tree with $n$ leaves} is a tree with exactly $n$ leaves enumerated by elements of $[n]=\{1,...,n\}$ such that each vertice is at least trivalent. 

Here and in the sequel, we use the following \textbf{notation}: by \textit{vertices} of a tree we mean all the vertices (in the usual graph-theoretical sense) excluding the leaves. A \textit{bold edge} is an edge connecting two vertices  (see  Figure \ref{Figtrees}).
 
 \begin{figure}
 \includegraphics[scale=0.7]{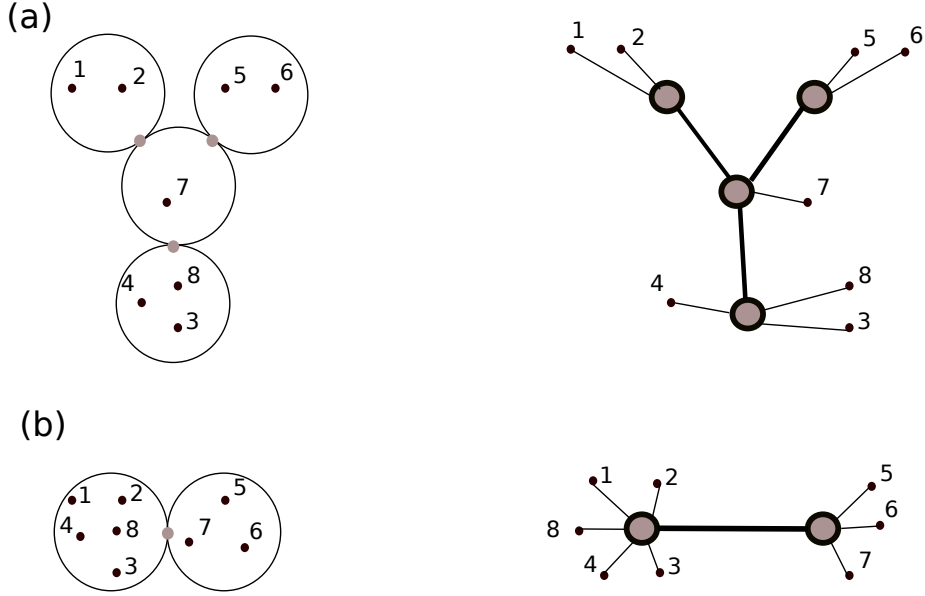}
\caption{Stable nodal curves (left) and the corresponding trees (right)}
\label{Figtrees}
\centering
\end{figure}

The initial space $\MM_{0,n}$ is a stratum corresponding to the one-vertex tree. Two-vertex trees (Fig.\ref{Figtrees}(b)) are in a bijection with bipartitions of the set $[n]$: $T\sqcup T^{c} = [n]$ s.t. $|T|, |T^{c}| > 1$. We denote the closure of the corresponding stratum  by $\D_{T}$. The latter are important since the (Poincaré duals of) closures of the strata generate the Chow ring $\textbf{A}^{*}(\MMM_{0,n})$:

\begin{theorem}\cite[Theorem 1]{Keel92} \label{KeelPresentation}
The Chow ring $\bf{A}^{*}$$(\MMM_{0,n})$ is isomorphic to the polynomial ring
$$\mathbb{Z}\left[\D_{T}:\; T \subset [n]; |T|>1, |T^{c}| >1\right]$$
factorized by the  relations:
\begin{enumerate}
\item $\D_{T} = \D_{T^{c}}$;
\item $\D_{T}\D_{S} =0$ unless $S \subset T$ or $T \subset S$ or $S \subset T^{c}$ or $T \subset S^{c}$;
\item For any distinct elements $i,j,k,l \in [n]$:
$$\sum_{i,j \in T; k,l \not\in T} \D_{T} = \sum_{i,k \in T; j,l \not\in T} \D_{T} = \sum_{i,l \in T; j,k \not\in T} \D_{T}$$
\end{enumerate}
\end{theorem}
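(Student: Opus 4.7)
The plan is to follow Keel's inductive strategy, realizing $\MMM_{0,n+1}$ as an iterated blow-up over a simpler base so that the Chow ring can be built up using the blow-up formula at each step. The base case is $\MMM_{0,4}\cong\P^1$: its three boundary points are pairwise rationally equivalent on $\P^1$, which is exactly the four-point relation (3) in this range, while relations (1) and (2) are vacuous.

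For the inductive step I would invoke Kapranov's construction that presents $\MMM_{0,n+1}$ as an iterated smooth blow-up of $\P^{n-2}$, with centers running over proper transforms of linear subspaces spanned by subsets of $n$ chosen points in general position, performed in order of increasing dimension. Applying the standard blow-up formula for Chow rings iteratively produces an explicit presentation of $\bf{A}^*(\MMM_{0,n+1})$ in which the generators can be identified with boundary divisor classes $\D_T$: the hyperplane class of $\P^{n-2}$ contributes via relation (3), and each exceptional divisor of a blow-up at the proper transform of the subspace spanned by a subset $S$ matches a boundary stratum labelled by $S$ (or equivalently, by its complement).

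Next I would verify that the three stated relations hold geometrically. Relation (1) is a notational identification, since the boundary stratum depends only on the unordered partition $\{T,T^c\}$. Relation (2) encodes the transversal intersection pattern of the boundary: $\D_S$ and $\D_T$ meet in a nonempty stratum only when the two partitions are compatible, i.e., nested or disjoint, and this intersection is empty as a cycle otherwise. Relation (3) is obtained by pulling back along the additional forgetful morphism $\varphi_{ijkl}\colon\MMM_{0,n+1}\to\MMM_{0,4}\cong\P^1$ that remembers only the marks $i,j,k,l$; each of the three sums in (3) is the pullback of one of the three boundary points of $\MMM_{0,4}$, and any two points on $\P^1$ are rationally equivalent.

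The main obstacle is completeness: showing that the surjection from the stated polynomial quotient ring onto $\bf{A}^*(\MMM_{0,n+1})$ is injective. My plan is to use relations (2) and (3) to reduce every monomial in the $\D_T$ to one indexed by a nested family of subsets, thereby producing an explicit spanning set, and then to compare its cardinality with an independent geometric additive basis of the Chow groups read off from the stratification by stable trees. The combinatorial bookkeeping that ensures the blow-up relations collapse to exactly (1)--(3) at every stage of Kapranov's iterated construction (and that no auxiliary generators survive beyond the $\D_T$) is the technical heart of the argument, and is the reason the proof proceeds by induction on the sequence of blow-ups rather than directly on $n$.
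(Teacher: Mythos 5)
The paper itself offers no proof of this statement (it is quoted from Keel), so your proposal has to stand on its own; Keel's actual argument, for comparison, inducts along the forgetful map $\MMM_{0,n+1}\to\MMM_{0,n}\times\MMM_{0,4}$, factors it as a composition of blow-ups along smooth centers, and gets generators, relations \emph{and} the additive structure simultaneously from the blow-up formula, so no separate spanning-versus-basis count is ever needed. Your verification half is fine in outline: the base case $\MMM_{0,4}\cong\mathbb{P}^1$, relation (1) as a notational identification, relation (2) from emptiness of the set-theoretic intersection for incompatible pairs, and relation (3) by pulling back the three boundary points under $\varphi_{ijkl}$ are all standard, and a Kapranov-style iterated blow-up of $\mathbb{P}^{n-2}$ can indeed be used to see that the $\D_T$ generate (though the dictionary between exceptional divisors, proper transforms of spanned subspaces, the hyperplane class and the $\D_T$ needs more care than you give it).

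The genuine gap is the completeness step, and the specific plan you sketch for it does not work as stated. Reducing monomials via (2) to products over nested families produces exactly the classes of closed boundary strata, and these are \emph{not} linearly independent, so there is no ``independent geometric additive basis of the Chow groups read off from the stratification by stable trees'' against which to compare cardinalities: already for $n=5$ there are ten boundary divisors $\D_T$ while ${\bf A}^1(\MMM_{0,5})$ has rank $5$ (the space is $\mathbb{P}^2$ blown up at four points), the discrepancy being accounted for by relation (3) and its consequences in higher degree. Thus one must also quotient the nested-monomial spanning set by the relations generated by (3), and proving that the resulting count matches the rank of each Chow group --- equivalently, that (1)--(3) generate \emph{all} relations --- is precisely the content of the theorem; it is what Keel's inductive blow-up presentation (or, in your route, a careful tracking of how each application of the blow-up formula over $\mathbb{P}^{n-2}$ collapses onto (1)--(3) with no surviving extra generators or relations) actually establishes, and it is missing from your argument.
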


\subsection{Weighted compactifications}\label{Weicom}

The next breakthrough step was done by B. Hassett in \cite{Has03}. 

Define a \textit{weight data} as an element $\mathcal{A} = (a_{1}, \dots, a_{n}) \in \mathbb{R}^{n}$ such that
\begin{itemize}
\item $0 < a_{i} \leq 1$ for any $i \in [n]$,

\item $a_{1}+ \dots +a_{n} > 2$.
\end{itemize}

\begin{definition}
Let $B$ be a scheme. A family of nodal curves with $n$ marked points $(\pi: C \rightarrow B; s_{1}, \dots, s_{n})$ is \textit{$\mathcal{A}$--stable} if
\begin{enumerate}
\item $K_{\pi}+ a_{1}s_{1}+\dots + a_{n}s_{n}$ is $\pi$-relatively ample,

\item whenever the sections $\{s_{i}\}_{i \in I}$ intersect for some $I \subset [n]$, one has $\sum_{i \in I} a_{i} < 1$.
\end{enumerate}
The first condition can be rephrased as:
 each irreducible component of any geometric fiber has at least three {distinct} special points.

\end{definition}

\begin{theorem}\cite[Theorem 2.1]{Has03}
For any weight data $\mathcal{A}$ there exist a connected Deligne--Mumford stack $\MMM_{0, \mathcal{A}}$ smooth and proper over $\mathbb{Z}$, representing the moduli functor of $\mathcal{A}$--stable rational curves. The corresponding moduli scheme is a smooth projective variety over $\mathbb{Z}$.
\end{theorem}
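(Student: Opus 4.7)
The plan is to obtain $\MMM_{0,\mathcal{A}}$ as a birational contraction of the Deligne--Mumford compactification $\MMM_{0,n}$ and then verify directly that this contraction represents the $\mathcal{A}$-stable moduli functor. On closed points I would define a map $\rho_{\mathcal{A}}:\MMM_{0,n}\to\MMM_{0,\mathcal{A}}$ by iteratively contracting every irreducible component of a stable curve on which $K_{C}+\sum a_{i}s_{i}$ fails to be ample. On such a component the weights of the marked points that meet it sum to at most one, so after those marked points are allowed to coincide, condition (2) of $\mathcal{A}$-stability is automatically preserved. The content is that this prescription extends to a morphism of stacks: one forms the relative log-canonical sheaf $\omega_{\pi}(\sum a_{i}s_{i})$ on the universal curve over $\MMM_{0,n}$, interprets the weights as $\mathbb{Q}$-divisor coefficients, and takes the relative $\mathrm{Proj}$ of a sufficiently divisible multiple of the associated graded algebra.

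Second, I would check that $\mathcal{A}$-stability is an open condition in any flat family of genus zero nodal curves with $n$ sections; this already yields representability by some Deligne--Mumford stack. Smoothness is a consequence of standard deformation theory, since for a nodal curve of arithmetic genus zero the obstruction group $H^{1}$ of the logarithmic tangent sheaf (twisted by the marked points) vanishes. Properness follows from the valuative criterion: given an $\mathcal{A}$-stable family over the punctured spectrum of a discrete valuation ring, one first applies the existence half of the Deligne--Mumford stable-reduction theorem inside $\MMM_{0,n}$ and then postcomposes with $\rho_{\mathcal{A}}$, with uniqueness deduced from separation of $\MMM_{0,n}$ together with the canonical nature of the contraction.

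For projectivity of the coarse moduli scheme I would exhibit an ample line bundle directly. The natural candidate is a tensor power of the pushforward of the log-canonical sheaf from the first step; a weighted analogue of Knudsen's construction of the $\psi$-classes shows that such a bundle separates points and tangent vectors on $\MMM_{0,\mathcal{A}}$. Combined with properness this upgrades the coarse moduli space to a smooth projective variety over $\mathbb{Z}$.

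The main obstacle is the very first step: promoting the fibrewise contraction to a morphism of Deligne--Mumford stacks. One has to verify that the relative log-canonical algebra is finitely generated in each flat family and that its $\mathrm{Proj}$ is actually a family of $\mathcal{A}$-stable curves rather than a family acquiring unexpected singularities or extraneous components. In genus zero each contracted locus is a tree of projective lines, which makes the verification tractable, but the bookkeeping is delicate whenever several components to be contracted meet a surviving component at a common node, since one must keep track of how several marked points collide simultaneously and check that the inequality in condition (2) survives.
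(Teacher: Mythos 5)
First, a point of reference: the paper does not prove this statement at all --- it is quoted verbatim from Hassett \cite[Theorem 2.1]{Has03} --- so your proposal can only be measured against the standard construction, and there it has a genuine structural gap. You propose to obtain $\MMM_{0,\mathcal{A}}$ as a ``birational contraction'' of $\MMM_{0,n}$, but the construction you actually describe (relative $\mathrm{Proj}$ of a multiple of $\omega_{\pi}(\sum a_i s_i)$ over the universal curve of $\MMM_{0,n}$) contracts the \emph{fibers}, not the base: it produces a family of $\mathcal{A}$-stable curves over $\MMM_{0,n}$, hence at best a classifying morphism $\rho_{\mathcal{A}}\colon \MMM_{0,n}\to\MMM_{0,\mathcal{A}}$ \emph{provided the target already exists as an algebraic stack}. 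It does not construct the moduli stack, since one still has to identify which points of $\MMM_{0,n}$ get glued, and that requires either the universal property of an already-built $\MMM_{0,\mathcal{A}}$ or a $\mathrm{Proj}$ of a section ring on $\MMM_{0,n}$ itself together with a proof that the result carries a universal family. The sentence meant to discharge existence --- that openness of $\mathcal{A}$-stability in flat families ``already yields representability by some Deligne--Mumford stack'' --- is where the real content of Hassett's theorem sits and is not supplied: openness only gives an open substack once you have an ambient algebraic stack of all $n$-pointed genus-zero nodal curves (e.g. the stack $\mathcal{V}_{0,n}$ used by Smyth and quoted in Section 2 of this paper, or Hassett's boundedness plus Hilbert-scheme presentation), and the Deligne--Mumford property additionally needs finiteness and unramifiedness of automorphism groups, which is precisely what the ampleness of $K_{\pi}+\sum a_i s_i$ has to be used for. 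In Hassett's actual argument the stack is constructed first (boundedness, openness, finite automorphisms, deformation theory for smoothness over $\mathbb{Z}$, stable reduction for properness, semipositivity/GIT for projectivity), and the reduction morphisms $\MMM_{0,n}\to\MMM_{0,\mathcal{A}}$ come afterwards, not the other way around.

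A second, smaller gap is the separatedness half of the valuative criterion. Uniqueness of $\mathcal{A}$-stable limits does not follow from ``separation of $\MMM_{0,n}$ together with the canonical nature of the contraction,'' because an arbitrary $\mathcal{A}$-stable extension over a DVR is not handed to you as the contraction of a Deligne--Mumford-stable extension; you must first show that, possibly after finite base change, any such extension is dominated by the stable model of the generic fiber (blow up the total space of the family and compare relative log-canonical models), and only then does uniqueness of the DM limit transfer to uniqueness of the $\mathcal{A}$-stable limit. Your smoothness step (vanishing of obstructions for nodal genus-zero curves with sections away from the nodes) and the projectivity sketch via a log-canonical polarization are reasonable, but without an independent construction of the stack and a correct separatedness argument the proposal does not yet amount to a proof.
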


The  Deligne--Mumford compactification arises as a special case  for the weight data $ (1,\dots, 1)$.

It is natural to ask: how much does a weighted compactification $\MMM_{0, \mathcal{A}}$ depend on $\mathcal{A}$? Pursuing this question, let us consider the  space of parameters:
$$\AAA_{n} = \left\{  \mathcal{A} \in \mathbb{R}^{n}:\; 0< a_{i} \leq 1, \; \sum_{i} a_{i} >2 \right\} \subset \mathbb{R}^{n}.$$

The hyperplanes $\sum_{i \in I} a_{i} = 1$, $I \subset [n], |I| \geq 2$, (called \textit{walls}) cut the polytope $\AAA_{n}$ into \textit{chambers}. The Hassett compactification depends on a chamber only \cite[Proposition 5.1]{Has03}.

Combinatorial stratification of the space $\MMM_{0, \mathcal{A}}$ looks similarly to that of the Deligne--Mumford's with the only difference --- some of the marked points now can coincide \citep{Cey07}. 

More precisely, a \textit{weighted tree} $(\gamma, I)$ is an ordered $k$-partition $I_{1}\sqcup \dots \sqcup I_{k} = [n]$ and a tree $\gamma$ with $k$ ordered leaves marked by elements of the partition such that (1) $\sum_{j \in I_{m}} a_{j} \leq 1$ for any $m$, and (2) for each vertex, the number of emanating bold edges plus the total weight is greater than $2$. Open strata are enumerated by weighted trees:
the stratum of the space $\MMM_{0,\mathcal{A}}$ corresponding to a weighted tree $({\gamma}, I)$ consists of curves whose irreducible components form the tree $\gamma$ and collisions of sections form the partition $I$. Closure of this stratum is denoted by $\D_{({\gamma}, I)}$.

\subsection{Polygon spaces as compactifications of $\MM_{0,n}$}\label{FlePolcom}

Assume that an $n$-tuple of positive real numbers $\mathcal{L} = (l_{1},...,l_{n})$ is fixed. We associate with it a \textit{flexible polygon}, that is, $n$  rigid bars of lengths $l_{i}$ connected in a cyclic chain by revolving joints. A \textit{configuration} of $\mathcal{L}$ is an $n$-tuple  of points $(q_{1},...,q_{n}), \; q_i \in \mathbb{R}^3,$ with $|q_i q_{i+1}|=l_{i}, \; \; |q_{n} q_{1}|=l_{n}$.

The following two  definitions for the \textit{polygon space}, or the \textit{moduli space of the flexible polygon} are equivalent:

\begin{definition} \cite{HKn}
\begin{itemize}
\item[I.]The moduli space $M_{\mathcal{L}}$ is a set of all configurations of $\mathcal{L}$  modulo orientation preserving  isometries of $\mathbb{R}^3$.

\item[II.] Alternatively, the space $M_{ \mathcal{L}}$ equals the quotient of the space $$\left\{(u_1,...,u_n) \in (\mathbb{S}^2)^n : \sum_{i=1}^n l_iu_i=0\right\}$$ by the diagonal action of the group $\SO_3(\mathbb{R})$.
\end{itemize}
\end{definition}

The second definition shows that the space  $M_{\mathcal{L}}$ does not depend on the
ordering of $\{l_1,...,l_n\}$; however, it does depend on the values
of $l_i$. 


Let us consider the \textit{parameter space} $$\left\{ (l_{1}, \dots, l_{n}) \in \mathbb{R}^{n}_{>0}:\ l_{i}< \sum_{j\neq i} l_{j} \text{ for }i=1, \dots, n\right\}.$$

This space is cut into open \textit{chambers} by \textit{walls}. The latter are hyperplanes with defining equations
$$\sum_{i\in I} l_i = \sum_{j\notin I} l_j.$$

The diffeomorphic type of $M_{ \mathcal{L}}$ depends only on the chamber containing $\mathcal{L}$. For a point $\mathcal{L}$ lying strictly inside some chamber, the space $M_{\mathcal{L}}$ is a smooth $(2n-6)$-dimensional  algebraic variety \cite{Kl}. In this case we say that the length vector is \textit{generic}. 

\begin{definition}
For a generic length vector $\mathcal{L}$, we call a subset $J\subset [n]$ {\it long} if $$\sum_{i \in J} l_i > \sum_{i\notin J} l_i.$$
Otherwise, $J$ is called \textit{short}. The set of all short sets we denote by $SHORT(\mathcal{L})$.
\end{definition}

Each subset of a short set is also short, therefore $SHORT(\mathcal{L})$ is  a (threshold Alexander self-dual) simplicial complex.  Rephrasing the above, the diffeomorphic type of $M_{ \mathcal{L}}$ is defined by the simplicial complex $SHORT(\mathcal{L})$.

\section{ASD and preASD compactifications}\label{ASDpreASD}

\subsection{ASD and preASD simplicial complexes}
Simplicial complexes provide a necessary combinatorial framework  for the description of the category of smooth modular compactifications of $\MM_{0,n}$.

A \textit{simplicial complex} (a \textit{complex,} for short) $K$ is a subset of $2^{[n]}$ with the hereditary property: $A \subset B\in K$ implies $A \in K$. Elements of $K$ are called \textit{faces} of the complex. Elements of $2^{[n]} \setminus K$ are called \textit{non-faces}. The maximal (by inclusion) faces  are called \textit{facets}.

We assume that the set of $0$-faces (the set of vertices) of a complex is $[n]$. The complex $2^{[n]}$ is denoted by $\Delta_{n-1}$. Its $k$-skeleton is denoted by $\Delta_{n-1}^{k}$. {In particular, $\Delta_{n-1}^{n-2}$ is the boundary complex of the simplex $\Delta_{n-1}$.} 

\begin{definition}
For a  complex $K \subset 2^{[n]}$,  its \textit{Alexander dual}  is  the simplicial complex 
$$K^{\circ}:= \{A \subset [n]:\; A^{c} \not\in K\} = \{A^{c}:\; A \in 2^{[n]}\backslash K\}.$$
Here and in the sequel,  $A^c=[n]\setminus A$ is the complement of $A$.
A  complex $K$ is \textit{Alexander self-dual (an ASD complex)} if $K^{\circ}=K$. A \textit{pre Alexander self-dual (a pre ASD)} complex is a complex contained in some ASD  complex.
\end{definition}

In other words, ASD complexes (pre ASD complexes, respectively)  are characterized by the condition:
 for any partition $[n]=A\sqcup B$, {exactly one} (at most one, respectively) of $A$, $B$ is a face.

\medskip

Some ASD complexes are \textit{threshold complexes}: they equal $SHORT(\mathcal{L})$ for some generic weight vectors $\mathcal{L}$ (Section \ref{FlePolcom}).  It is known that threshold ASD complexes exhaust all ASD complexes for $n \leq 5$. However, for bigger $n$ this is no longer true. Moreover, for $n \rightarrow \infty$ the percentage of threshold ASD complexes tends to zero.

To produce new examples of ASD complexes, we use \textit{flips}:
\begin{definition}\cite{PanGal}
For an ASD complex $K$ and a facet $A \in K$ we build a new ASD complex
$$\flip_{A}(K):= (K\backslash A) \cup A^{c}.$$
\end{definition}

It is easy to see  that
\begin{proposition}\label{LemmaNotLonger2}
(1) \cite{PanGal} Inverse of a flip is also some flip.
(2) \cite{PanGal} Any two ASD complexes are connected by a sequence of flips. (3) For any ASD complex $K$ there exists a threshold ASD complex $K'$ that can be obtained  from $K$ by a sequence of flips with some $A_{i}\subset [n]$ such that  $|A_{i}| > 2, |A_{i}^c| > 2$.

\end{proposition}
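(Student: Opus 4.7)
The plan is to prove (3); parts (1) and (2) are cited directly from \cite{PanGal}.

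The starting observation is that by (2) there exists \emph{some} sequence of flips transforming $K$ into a threshold ASD complex. What (3) demands on top of that is the absence of \emph{small} flips from the sequence, i.e.\ flips performed at a facet $A$ with $|A|\le 2$ or $|A^{c}|\le 2$. So the real task is to rewrite the sequence produced by (2) so that every small flip disappears.

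First I would analyse the rigid combinatorial meaning of a small facet. If a singleton $\{i\}$ is a facet of an ASD complex $L$, then no pair $\{i,j\}$ lies in $L$; by the ASD condition every set $[n]\setminus\{i,j\}$ with $j\ne i$ must then be a face of $L$, so $L$ is extremely ``fat'' away from $i$. An analogous rigid picture holds when $\{i,j\}$ is a facet, and symmetrically when $|A^{c}|\le 2$. Using these local descriptions I would prove a \emph{substitution lemma}: a single flip at a small facet can be realized as a finite composition of flips whose facets all satisfy $|A|>2$ and $|A^{c}|>2$. The construction I have in mind is to enlarge the small facet gradually, passing through a chain of medium-sized facets of intermediate ASD complexes, so that the composed effect equals the original small flip.

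Once the substitution lemma is available, I would apply it to each small flip in the sequence from (2). The resulting sequence consists entirely of large flips and still transforms $K$ into a threshold ASD complex, which is the desired conclusion.

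The main obstacle I expect is the substitution lemma itself: at every intermediate stage the chosen set has to be a genuine facet, and the intermediate complex has to remain ASD. The naive nested-chain replacement need not be valid as stated, so I expect to proceed by induction---on $n$, on the size of the small facet, or on the number of small flips remaining in the sequence. As a secondary safeguard, if a particular substitution cannot be carried out with the given target, I would shift the target threshold complex by invoking (2) inside the subclass of threshold ASD complexes; since threshold complexes are abundant, this flexibility should suffice to close the remaining gaps.
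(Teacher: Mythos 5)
There is a genuine gap, and it sits exactly where you expected trouble: the substitution lemma is not just hard, it is false. A flip at a facet $A$ with $|A|>2$ and $|A^{c}|>2$ never changes the collection of faces of cardinality at most $2$: the removed face $A$ has size $>2$, and the added face $A^{c}$ has size $>2$ while all of its proper subsets were already faces (if $B\subsetneq A^{c}$ then $B^{c}\supsetneq A$ is a non-face, so $B\in K$ by self-duality). By contrast, a flip at a small facet $A$ with $|A|\le 2$ deletes the small face $A$, and a flip with $|A^{c}|\le 2$ creates the new small face $A^{c}$; in either case the $2$-element non-faces change. Hence no finite composition of ``large'' flips can ever equal a single small flip, and the main engine of your argument cannot be built. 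Your fallback remark about ``shifting the target threshold complex'' gestures toward the right fix but is not developed into an argument.

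The paper's route turns this obstruction into the proof. Since large flips preserve the $2$-element non-faces, it suffices to find, for the given ASD complex $K$, a \emph{threshold} ASD complex $K'$ with the same collection of $2$-element non-faces; the standard connectivity argument behind part (2) then only uses large flips: pick $A\in K\setminus K'$ of maximal cardinality, note it is a facet of $K$, flip at it (this strictly decreases $|K\setminus K'|$ because $A^{c}\in K'$), and observe that every element of $K\setminus K'$ and of $K'\setminus K$ has size at least $3$ when the small faces agree, so $|A|>2$ and $|A^{c}|>2$ automatically. The existence of such a $K'$ follows from a short classification: any two non-faces of an ASD complex intersect (disjointness would force one of them to be contained in the complement of the other, which is a face), so the $2$-element non-faces form a pairwise intersecting family of pairs, i.e.\ up to renumbering they are empty, the triangle $\{12\},\{23\},\{31\}$, or the star $\{12\},\{13\},\dots,\{1k\}$, and in each case one easily writes down a generic length vector whose complex $SHORT(\mathcal{L})$ has exactly these $2$-element non-faces. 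If you want to salvage your write-up, replace the substitution lemma by this invariance observation plus the classification step.
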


\begin{proof} We prove (3).
It is sufficient to show that for any ASD complex, there exists a threshold ASD complex with the same collection of $2$-element non-faces. For this, let us observe that any two 
non-faces of an ASD complex necessarily intersect. Therefore, all possible collections of $2$-element non-faces of an ASD complex  (up to renumbering) are:
\begin{enumerate}
\item empty set;
\item $(12),\  (23),\ (31)$;
\item $(12),(13),\dots ,(1k)$.
\end{enumerate}
It is easy to find appropriate threshold ASD complexes for all these cases.
\end{proof}

ASD complexes appear in the game theory literature   as ``simple games with constant
sum'' (see \citep{vNMor}). One imagines $n$ players and all possible ways of partitioning them into two teams. The teams compete, and  a team \textit{looses} if it belongs to $K$. In the language of flexible polygons, a short set is a loosing team.

\medskip
\textbf{Contraction, or freezing operation.}  Given an ASD complex $K$, let us build a new ASD complex  $K_{(ij)}$  with $n-1$ vertices
$\{1,...,\widehat{{i}},...,\widehat{{j}},...,n,(i,j)\}$ by \textit{contracting the edge} $\{i,j\}\in K$, or \textit{freezing}   $i$ and $j$ together.

The formal definition is: for $A\subset \{1,...,\widehat{{i}},...,\widehat{{j}},...,n\}$, $A\in K_{(ij)}$ iff $A\in K$, and 
 $A\cup \{(ij)\}\in K_{(ij)}$ iff $A \cup \{i,j\}\in K$.
 
 Contraction  $K_I$ of any other face $I\in K$ is defined analogously.
 
Informally, in the language of simple game, contraction of an edge means making one player out of two.
In the language of flexible polygons, ``freezing'' means producing one new edge  out of two old ones (the lengths sum up).

\begin{figure}
\includegraphics[scale=1]{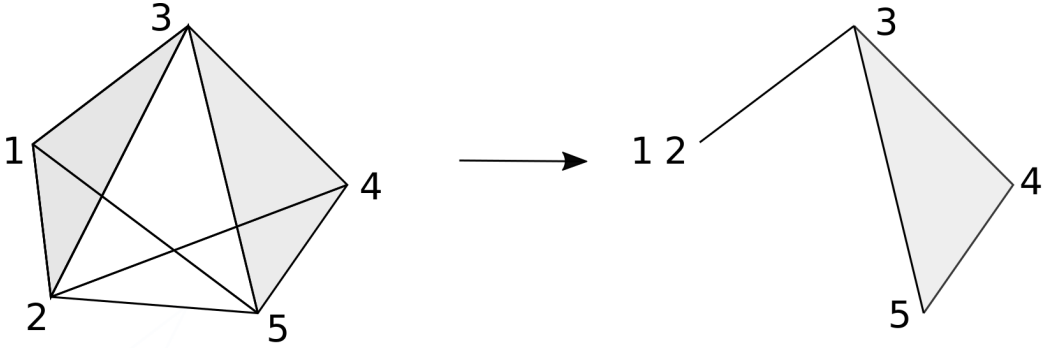}
\caption{Contraction of $\{1,2\}$ in a simplicial complex}
\label{Figcont}
\centering
\end{figure}

\subsection{Smooth extremal assignment compactifications}\label{Smycom}

Now we review the results of  \cite{Smyth13} and \cite{Han15}, and indicate a relation with preASD complexes.

For a scheme $B$, consider the space $\mathcal{U}_{\,0, n}(B)$ of all flat, proper, finitely-presented morphisms $\pi: \;\mathcal{C} \rightarrow B$ with $n$ sections $\{s_{i}\}_{i \in [n]}$, and connected, reduced, one-dimensional geometric fibers of genus zero.  Denote  by $\mathcal{V}_{0, n}$ the irreducible component of $\mathcal{U}_{\,0,n}$ that contains $\MM_{0,n}$.
\begin{definition}
A modular compactification of $\MM_{0,n}$ is an open substack $\mathcal{X} \subset \mathcal{V}_{0,n}$ that is proper over $\mathbb{Z}$. A modular compactification is stable  if every geometric point $(\pi:\;\mathcal{C} \rightarrow B; s_{1}, \dots, s_{n})$ is stable. We call a modular compactification {smooth} if it is a smooth algebraic variety.
\end{definition}

\begin{definition} A smooth extremal assignment $\mathcal{Z}$ over $\MMM_{0,n}$ is an assignment to each stable tree with $n$ leaves a subset of its vertices
$$\gamma \mapsto \mathcal{Z}(\gamma) \subset Vert(\gamma)$$
such that:
\begin{enumerate}
\item for any tree $\gamma$, the assignment is a proper subset of vertices: $\mathcal{Z}(\gamma) \subsetneqq Vert(\gamma)$,
\item for any contraction $\gamma \rightsquigarrow \tau$ with $\{v_{i}\}_{i \in I} \subset Vert(\gamma)$ contracted to $v \in Vert(\tau)$, we have $v_{i}\in \mathcal{Z}(\gamma)$ for all $i \in I$ if and only if $v \in \mathcal{Z}(\tau)$.
\item for any tree $\gamma$ and $v \in \mathcal{Z}(\gamma)$ there exists a two-vertex tree $\gamma'$ and $v'\in \mathcal{Z}(\gamma')$ such that $$\gamma' \rightsquigarrow \gamma \text{ and }v' \rightsquigarrow v.$$\end{enumerate}\end{definition}
\begin{definition}\label{DefAss}
Assume that $\mathcal{Z}$ is a smooth extremal assignment. A curve $(\pi:\;\mathcal{C} \rightarrow B; s_{1}, \dots, s_{n})$ is $\mathcal{Z}$--stable if it can be obtained from some Deligne--Mumford stable curve $(\pi':\;\mathcal{C}' \rightarrow B'; s_{1}', \dots, s_{n}')$ by (maximal) blowing down irreducible components of the curve $\mathcal{C}'$ corresponding to the vertices from the set $\mathcal{Z}(\gamma(\mathcal{C}'))$.
\end{definition} 

A smooth assignment is completely defined by its value on two-vertex stable trees with $n$ leaves. The latter bijectively correspond to unordered partitions $A\sqcup A^{c} = [n]$ with $|A|, |A^c| > 1$: sets $A$ and $A^{c}$ are affixed to two vertices of the tree. The first condition  of Definition \ref{DefAss} implies that no more than one of $A$ and $A^{c}$ is ``assigned''. One concludes that  preASD complexes are in bijection with smooth assignments.

All possible modular compactifications of $\MM_{0, n}$ are parametrized by smooth extremal assignments:

\begin{theorem}\cite[Theorems 1.9 \& 1.21]{Smyth13} and \cite[Theorem 1.3]{Han15}
\begin{itemize}
\item For any smooth extremal assignment $\mathcal{Z}$ of $\MM_{0,n}$, or equivalently, for any preASD complex $K$, there exists a stack $\MMM_{0,\mathcal{Z}} = \MMM_{0,K} \subset \mathcal{V}_{0,n}$ parameterizing all $\mathcal{Z}$--stable rational curves.
\item For any smooth modular compactification $\mathcal{X} \subset \mathcal{V}_{0,n}$, there exist a smooth extremal assignment $\mathcal{Z}$ (a preASD complex $K$) such that $\mathcal{X} = \MMM_{0,\mathcal{Z}} = \MMM_{0,K}$.
\end{itemize}
\end{theorem}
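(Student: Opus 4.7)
The plan is to split the theorem into a combinatorial dictionary (smooth extremal assignments $\longleftrightarrow$ preASD complexes) and the underlying geometric existence/smoothness statement, which is the real content of \cite{Smyth13} and \cite{Han15}. Once the dictionary is in place, the theorem is just a translation of those cited results.

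For the dictionary, I would start from the observation, already sketched above, that two-vertex stable trees on $[n]$ correspond bijectively to unordered bipartitions $[n]=A\sqcup A^c$ with $|A|,|A^c|\geq 2$. Given a preASD complex $K$, I declare the $A$-vertex of $\gamma_{A,A^c}$ to be $\mathcal{Z}_K$-assigned iff $A\in K$. Condition (1) applied to $\gamma_{A,A^c}$ then reads ``not both of $A,A^c$ lie in $K$'', which is precisely the preASD condition. Condition (3) forces a unique extension of $\mathcal{Z}_K$ to all stable trees by declaring a vertex $v\in\mathrm{Vert}(\gamma)$ assigned iff every two-vertex contraction separating $v$ from the rest assigns the $v$-side. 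Conversely, from any $\mathcal{Z}$ one reads off $K_{\mathcal{Z}}:=\{A\subset [n] : \text{the $A$-vertex of }\gamma_{A,A^c}\text{ is }\mathcal{Z}\text{-assigned}\}$; condition (1) yields the preASD property while condition (2), applied to contractions of three-vertex trees, gives heredity, so that $K_{\mathcal{Z}}$ is a simplicial complex. The assignments $K\mapsto \mathcal{Z}_K$ and $\mathcal{Z}\mapsto K_{\mathcal{Z}}$ are mutually inverse by construction.

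The geometric half I would invoke wholesale. For every preASD complex $K$, existence of the open substack $\overline{\mathcal{M}}_{0,K}=\overline{\mathcal{M}}_{0,\mathcal{Z}_K}\subset \mathcal{V}_{0,n}$ parameterizing $\mathcal{Z}_K$-stable curves is \cite[Thm.~1.9]{Smyth13}; its properness over $\mathbb{Z}$ is \cite[Thm.~1.21]{Smyth13}; and smoothness of this stack under the preASD (equivalently ``smooth assignment'') hypothesis is \cite[Thm.~1.3]{Han15}. For the converse, any smooth modular compactification $\mathcal{X}\subset \mathcal{V}_{0,n}$ arises from some extremal assignment by Smyth's classification, and its smoothness together with Han's criterion force that assignment to be a smooth one, hence a preASD complex by our dictionary.

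The main obstacle is the geometric half, which lives entirely inside \cite{Smyth13,Han15}: constructing the stack demands controlling maximal blow-downs of irreducible components of Deligne--Mumford stable curves, checking flatness and finite presentation of the resulting families over $B$, and a delicate local singularity analysis to pin down which $\mathcal{Z}$-stable moduli stacks are smooth. The combinatorial half is much gentler, and the only fussy point on our side is verifying that condition (3) produces no assignments beyond the unique extension of two-vertex data; this is a short induction on the number of bold edges, using condition (2) repeatedly to reduce each inductive step to a two-vertex contraction.
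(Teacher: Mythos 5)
Your proposal matches the paper's treatment: this theorem is imported wholesale from \cite{Smyth13} and \cite{Han15}, and the paper's only original content here is the same combinatorial dictionary you describe (two-vertex stable trees $\leftrightarrow$ bipartitions $A\sqcup A^c$, with condition (1) giving the preASD property and the assignment determined by its two-vertex values), so your argument is correct and essentially identical in structure, just spelled out in slightly more detail on the heredity and uniqueness checks.
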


There are two different ways to look at a moduli spaces. In the present paper we  look at the moduli space as at a smooth algebraic variety equipped with $n$ sections (\textit{fine moduli space}). The other way is to look at it as at a smooth algebraic variety (\textit{coarse moduli space}). Different preASD complexes give rise to different fine moduli spaces. However, two different complexes can yield isomorphic coarse moduli spaces.

Indeed, consider two preASD complexes $K$ and $K\cup\{ij\}$ (we abbreviate the latter as $K+(ij)$), assuming that $\{ij\}\notin K$. The corresponding algebraic varieties $\MMM_{0, K}$ and $\MMM_{0, K + (ij)}$ are isomorphic. A vivid explanation is: to let a couple of marked points to collide is the same as to add a nodal curve with these two points sitting alone on an irreducible component. Indeed, this irreducible component would have exactly three special points, and $\PSL_{2}$ acts transitively on triples.

\begin{theorem}\cite[Statements 7.6--7.10]{Han15}\label{objects_fixedlevel}
The set of smooth modular compactifications of $\MM_{0,n}$ is in a bijection with objects of the ${\bf preASD}_{n}/ \thicksim$, where $K\thicksim L$ whenever $K\setminus L$ and $L\setminus K$ consist of two-element sets only.
\end{theorem}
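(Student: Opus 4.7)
The proof builds on the preceding theorem of Smyth--Han, which identifies smooth modular compactifications (viewed as stacks $\MMM_{0,K} \subset \mathcal{V}_{0,n}$ equipped with $n$ sections) with preASD complexes. The remaining task is to characterize when two preASD complexes $K, L$ yield isomorphic underlying algebraic varieties after forgetting the pointed-family structure; the assertion is that this occurs exactly when $K \sim L$. Since the relation $\sim$ is generated by elementary moves $K \mapsto K \cup \{ij\}$ with $\{ij\} \notin K$ and $K \cup \{ij\}$ still preASD, it suffices to analyze a single such move in each direction.

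For the forward implication, I would formalize the heuristic given just before the theorem. A $K$-stable family in which sections $s_i, s_j$ lie on an attached rational bubble $E \subset \mathcal{C}$ corresponds to a $(K \cup \{ij\})$-stable family in which $s_i = s_j$ sits at the image of the contracted bubble. The bubble $E \cong \mathbb{P}^1$ carries exactly three special points (the two sections and the node), so $\PSL_2$ acts simply transitively on the relevant triples and $E$ is canonically determined with trivial moduli. Bubble contraction and its inverse (a pointed blow-up along the colliding sections) therefore define mutually inverse morphisms $\MMM_{0,K} \leftrightarrow \MMM_{0,K \cup \{ij\}}$ of underlying schemes, although not of families; iteration handles any $K \sim L$.

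For the reverse implication, suppose $A \in K \triangle L$ with $|A| = k \geq 3$; without loss of generality $A \in K \setminus L$. Then $K$ permits the $k$ sections labeled by $A$ to coincide at a single point of one component, whereas $L$ forces them to be distributed on a rational bubble with $k+1$ special points. Since $\MM_{0,k+1}$ has positive dimension $k-2 \geq 1$, the $A$-labeled boundary divisor in $\MMM_{0,L}$ contracts in $\MMM_{0,K}$ to a stratum of strictly smaller dimension, exhibiting $\MMM_{0,L}$ as a nontrivial blow-up of $\MMM_{0,K}$ along this locus. This strictly increases the Picard rank, and the contributions from distinct size-$\geq 3$ faces are independent classes in the Picard group, so no combination of higher-face discrepancies can accidentally yield isomorphic varieties.

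The main obstacle is making the last independence claim precise when $K$ and $L$ differ at several size-$\geq 3$ faces simultaneously. The cleanest route is to factor both $\MMM_{0,K}$ and $\MMM_{0,L}$ as iterated blow-downs from a common fine model built by intersecting $K$ and $L$ at the level of size-$\geq 3$ faces, so that each face in the symmetric difference $K \triangle L$ of size $\geq 3$ corresponds to a distinct exceptional divisor. Once the distinctness of these exceptional divisors is verified (e.g., through their pairing with $\psi$-classes studied later in the paper), any abstract isomorphism of varieties must match the two sets of blow-down data, leaving 2-element faces as the only possible discrepancy, as required.
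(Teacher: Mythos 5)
You should first note that the paper does not actually prove this theorem: it is imported from \cite{Han15} (Statements 7.6--7.10), and the only argument present in the text is the heuristic just before the statement, namely that $\MMM_{0,K}$ and $\MMM_{0,K+(ij)}$ are isomorphic because a bubble carrying only $s_i,s_j$ and a node has exactly three special points and hence no moduli. Your forward implication is a correct formalization of exactly that heuristic, so that half is fine.

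The reverse implication, however, has a genuine gap, and in fact aims at a false statement. You try to show that complexes differing by a face of size $\geq 3$ give non-isomorphic \emph{abstract} varieties. That is false: any permutation $\sigma$ of $[n]$ induces an abstract isomorphism $\MMM_{0,K}\cong\MMM_{0,\sigma(K)}$ (for threshold complexes this is just the independence of the polygon space $M_{\mathcal{L}}$ of the ordering of the lengths), while generically $K\setminus\sigma(K)$ contains faces of size $\geq 3$, so $K\not\thicksim\sigma(K)$. Hence the bijection must be read for compactifications with their modular structure --- equivalently, up to isomorphisms extending the identity on $\MM_{0,n}$ (or as the corresponding substacks of $\mathcal{V}_{0,n}$) --- and the reverse direction has to show that the birational map extending the identity is not an isomorphism once some $A$ with $|A|\geq 3$ lies in $K\triangle L$; your argument never fixes this particular map. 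Even granting your framework, the key steps are asserted rather than proved: Picard rank cannot decide the question when $K\triangle L$ contains large faces on both sides, the claim that ``any abstract isomorphism of varieties must match the two sets of blow-down data'' is precisely the unproven point (abstract isomorphisms need not preserve boundary strata or exceptional divisors), and the appeal to pairings with $\psi$-classes ``studied later in the paper'' is not carried out (those are developed only for ASD complexes). The mechanism that actually works, and is what the cited statements of \cite{Han15} verify, is modular: for $A\in K\setminus L$ with $|A|\geq 3$, a one-parameter family in $\MM_{0,n}$ in which the $A$-marked points collide limits in $\MMM_{0,K}$ to the codimension-$(|A|-1)\geq 2$ stratum where they coincide (the limit remembers only the collision point), whereas in $\MMM_{0,L}$ the limit remembers the cross-ratios of the $A$-points on the resulting bubble (or on the contracted model if $A^{c}\in L$); no map extending the identity on $\MM_{0,n}$ can match these limits in both directions, and this is checked face by face rather than by Picard-group bookkeeping.
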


\begin{example}\label{example}
PreASD  complexes and corresponding compactifications.
\begin{enumerate}
\item the 0-skeleton $\Delta_{n-1}^{0} = [n]$ of the  simplex $\Delta_{n-1}$ corresponds to the Deligne--Mumford compactification; 
\item {the complex $\mathcal{P}_{n} : = {\bf pt} \sqcup \Delta_{n-2}^{n-3}$ (disjoint union of a point and the boundary of a simplex $\Delta_{n-2}$)} is ASD. It corresponds to the Hassett weights $(1, \varepsilon, \dots, \varepsilon)$; this compactification is isomorphic to $\mathbb{P}^{n-3}$;
\item the Losev--Manin compactification $\MMM_{0, n}^{LM}$ \cite{LosMan00} corresponds to the weights $(1,1,\varepsilon, \dots, \varepsilon)$ and to the complex ${\bf pt}_{1} \sqcup {\bf pt}_{2} \sqcup \Delta_{n-3}$;
\item the space $(\mathbb{P}^{1})^{n-3}$ corresponds to weights $(1,1,1,\varepsilon, \dots, \varepsilon)$, and to the complex ${\bf pt}_{1} \sqcup {\bf pt}_{2} \sqcup {\bf pt}_{3} \sqcup \Delta_{n-4}$.
\end{enumerate}
\end{example}

\medskip

\subsection{ASD compactifications
via  stable point configurations}
 
ASD compactifications can be explained  in a self-contained way, without referring to \cite{Smyth13}.

Fix an ASD complex $K$ and consider configurations of $n$ (not necessarily all distinct) points $p_1,...,p_n$ in the projective line. A configuration is called  {\em stable} if the index set of each collection of coinciding points belongs to $K$. That is, whenever $p_{i_1}=...=p_{i_k}$, we have $\{i_1,...,i_k\}\in K$.

Denote by  $STABLE(K)$ the space of stable configurations in the  complex projective line.
The group  $\mathrm{PSL}_{2}(\mathbb{C})$  acts naturally on this space. Set $$\MMM_{0, K}:=STABLE(K)/\mathrm{PSL}_{2}(\mathbb{C}).$$

If $K$ is a threshold complex, that is, arises from some flexible polygon $\mathcal{L}$, then  the space $\MMM_{0, K}$ is isomorphic to the polygon space $M_{\mathcal{L}}$  \cite{Kl}.

\bigskip
Although the next theorem fits in a broader context of \cite{Smyth13}, we give here its elementary proof.
\begin{theorem}\label{ThmSmoothComp}
The space $\MMM_{0, K}$ is a  compact smooth variety with a natural complex structure.
\end{theorem}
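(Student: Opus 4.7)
The plan is to realize $\MMM_{0,K}$ as a geometric quotient of the open subvariety $STABLE(K) \subset (\p^1)^n$ by a free action of $\PSL_{2}(\C)$, and to verify compactness by a direct limit-extraction argument exploiting Alexander self-duality. Explicitly, $STABLE(K)$ is the complement in $(\p^1)^n$ of the union of the closed subvarieties $\Delta_A := \{(p_1, \dots, p_n) : p_i = p_j \text{ for all } i,j \in A\}$ over the non-faces $A \notin K$; it is manifestly open and $\PSL_{2}(\C)$-invariant.

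Freeness follows from ASD: a stable configuration cannot take only one distinct value (which would place $[n]$ in $K$, impossible since $\emptyset \in K = K^{\circ}$), nor only two (which would partition $[n] = A \sqcup A^c$ with both blocks in $K$, contradicting ASD); so every stable configuration has at least three pairwise distinct coordinates and hence trivial $\PSL_{2}(\C)$-stabilizer. The complex and algebraic structure on the quotient is then produced by local slices: cover $STABLE(K)$ by the $\PSL_{2}(\C)$-invariant opens $U_{ijk} := \{p_i, p_j, p_k \text{ pairwise distinct}\}$ indexed by triples; each $U_{ijk}$ admits the algebraic slice $\{p_i = 0,\, p_j = 1,\, p_k = \infty\}$, identifying $U_{ijk}/\PSL_{2}(\C)$ with a Zariski open in $(\p^1)^{n-3}$. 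The transitions between such charts are Möbius reparametrizations in three distinguished coordinates, hence algebraic, and the quotient inherits the structure of a smooth algebraic variety of dimension $n-3$ with a natural complex structure.

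The main obstacle is compactness, because $STABLE(K)$ is only open in $(\p^1)^n$ and, when $K$ is non-threshold, is not the stable locus of any GIT linearization that would yield compactness for free. The argument I would give is a direct bubbling of semistable-reduction flavor. Given a sequence $[x^{(k)}] \in \MMM_{0,K}$, extract a subsequence $x^{(k)} \to x$ in $(\p^1)^n$; if $x \in STABLE(K)$ we are done, otherwise the cluster partition of $x$ contains some block $B \notin K$ and by ASD the complement $B^c$ is a face. I would then choose $g_k \in \PSL_{2}(\C)$ zooming into $B$ at the rate dictated by the slowest intra-$B$ separation, so that after acting the $B$-coordinates spread to a limiting configuration with a strictly smaller maximal non-face cluster, while the $B^c$-coordinates collapse onto a single point ``at infinity'' --- a collapse that is legal precisely because $B^c \in K$. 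The most delicate point is to ensure termination, which I would handle by iterating the construction ordered by reverse inclusion on non-face blocks, using the hereditary property of simplicial complexes to control the subclusters that may arise inside the shrinking block. Since $[n]$ is finite the process halts after finitely many steps at a stable limit, which gives the required sequential compactness of $\MMM_{0,K}$.
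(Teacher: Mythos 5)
Your proposal is correct and follows essentially the same route as the paper: the same $U_{ijk}$ cross-ratio charts (with the observation that stability forces at least three distinct points, hence a free $\PSL_2(\mathbb{C})$-action) give smoothness, and compactness is obtained by the same Möbius-rescaling ("zoom into the collapsing non-face cluster, legal because its complement is a face") argument. The only cosmetic difference is that you phrase the rescaling step as a directly terminating iteration on strictly shrinking non-face clusters, whereas the paper runs it as a contradiction with a minimal-cardinality collapsing long set; these are the same idea.
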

\begin{proof}$\;$
\textbf{Smoothness.} For a distinct triple of indices $i,j,k \in [n]$, denote by $U_{i,j,k}$ the subset of $\MMM_{0, K}$ defined by  $p_i\neq p_j,$ $p_j \neq p_k$, and $p_i \neq p_k$.   For each of $U_{i,j,k}$, we get rid of the action of the group $\mathrm{PSL}_{2}(\mathbb{C})$, setting
$$U_{i,j,k}=\big\{(p_1,...,p_n)\in \MMM_{0, K}:\; p_i=0,p_j=1,\text{ and }p_k=\infty  \big\}.$$

Clearly, each of the charts  $U_{i,j,k}$ is an open smooth manifold. Since all the $U_{i,j,k}$ cover $\MMM_{0, K}$, smoothness is proven.

\medskip

\textbf{Compactness.} Let us show that each sequence of $n$-tuples has a converging subsequence.

Assume the contrary. Without loss of generality, we may assume that the sequence  $(p_1^i=0,p_2^i=1,p_3^i=\infty,p_4^i,...,p_n^i)_{i=1}^{\infty}$ has no converging subsequence. We may assume that for some set $I \notin K$,  all  $p^i_j$  with $j\in I$ converge to a common point. We say that we have a \textit{collapsing long set} $I$. This notion depends on the choice of a chart. We may assume that our collapsing long set has the minimal cardinality among all long sets that can collapse without a limit (that is, violate compactness) for this complex $K$. We may assume that  $I=\{3,4,5,...,k\}$. 

This long set can contain at most one of the points $p_1,p_2,p_3$. We consider the case when it contains $p_3$; other cases are treated similarly.

That is, all the points $p^i_4,...,p^i_k$ tend to $\infty$. Denote by $C_i$ the circle with the minimal radius embracing the points $p^i_3=\infty ,p^i_4,p^i_5,...,p^i_k$. The circle contains at least two points of $p^i_4,...,p^i_k, p_3=\infty$. Apply a transform $\phi_i \in \mathrm{PSL}_{2}(\mathbb{C})$ which turns the radius of $C_i$ to $1$, and keeps at least two of the points $p^i_4,...,p^i_k, p_3=\infty$ away from each other. In this new chart the cardinality of the collapsing long set gets smaller. A contradiction to the minimality assumption.
\end{proof}

A natural question is: what if one  takes a simplicial complex  (not a self-dual one), and cooks the analogous quotient space. Some heuristics are: if the complex contains simultaneously  some  set $A$  and its complement $[n]\setminus A$, we  have a stable tuple with a non-trivial stabilizer in $\mathrm{PSL}_{2}(\mathbb{C})$, so the factor has a natural nontrivial orbifold structure. If a simplicial complex is smaller than some ASD complex $K'$, and therefore, we get a proper open subset of $\MMM_{0, K'}$, that is, we lose compactness.

\subsection{Perfect cycles}
Assume that we have an ASD complex $K$ and the associated compactification
$\MMM_{0, K}$. Let $K_I$ be the contraction of some face $I\in K$. Since the variety $\MMM_{0, K_I}$ naturally embeds in $ \MMM_{0, K}$, the contraction procedure gives rise to a number of subvarieties of $\MMM_{0, K}$. These varieties (1) ``lie on the boundary'' \footnote{That is do not intersect the initial space $\MM_{0, n}$.} and (2)  generate the Chow ring (Theorem \ref{ASDChow} ). Let us look at them in more detail.

\medskip

An \textit{elementary perfect cycle} $(ij)=(ij)_{K}\subset \MMM_{0, K}$  is defined as

$$(ij)=(ij)_K=\{(p_1,...,p_n)\in \MMM_{0, K}: p_i=p_j\}.$$

Let $[n]=A_1\sqcup...\sqcup A_k$ be an unordered partition of $[n]$. A \textit{perfect cycle} associated to the partition 
\begin{align*}
(A_1)\cdot...\cdot(A_k)&=(A_1)_{K}\cdot...\cdot(A_k)_{K} = \\
&= \{(p_1,...,p_n)\in \MMM_{0, K}: i,j \in A_m \Rightarrow p_i=p_j\}.
\end{align*}


Each perfect cycle is isomorphic to $\MMM_{0, K'}$ for some complex $K'$ obtained from $K$ by a series of contractions.

Singletons play no role, so we omit all one-element sets $A_i$ from our notation. Consequently, all the perfect cycles are labeled by partitions of some subset of $[n]$ such that all the $A_i$ have at least two elements.

Note that for arbitrary $A \in K$, the complex $K_A$ might be ill-defined. This happens if $A \notin K$.
In this case the associated perfect cycle $(A)$ is empty.

 For each perfect cycle there is an associated Poincaré dual element in the cohomology ring. These dual elements we denote by the same symbols as the perfect cycles. 

\medskip

The following rules allow to compute the cup-product of perfect cycles: 

\begin{proposition}\label{ComputRules}$\;$
\begin{enumerate}
                                      \item  Let $A$ and $B$ be disjoint subsets of $[n]$. Then
                                      \begin{enumerate}

  \item $(A)\smile (B)=(A)\cdot (B).$

  \item $(Ai)\smile  (Bi)=(ABi).$

\end{enumerate}
                                      \item For $A\notin K$, we have $(A)=0$. If one of $A_k$ is a non-face of $K$, then $(A_1)\cdot...\cdot(A_k)=0$.

                                      \item  The four-term relation:

$(ij)+(kl)=(jk)+(il)$ holds for any distinct $i,j,k,l$.
                               
\end{enumerate}
\end{proposition}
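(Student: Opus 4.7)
The plan is to verify each item by a direct geometric argument in the quotient model $\MMM_{0,K}=STABLE(K)/\PSL_2$, using the affine charts $U_{a,b,c}$ from the proof of Theorem~\ref{ThmSmoothComp}: items (1) and (2) reduce to transverse-intersection calculations in linear coordinates, while the four-term relation (3) is read off from a cross-ratio identity on $\MMM_{0,K}$.

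Item (2) is immediate from the definition of $K$-stability: if $A\notin K$, no configuration satisfies $p_j=p_{j'}$ for all $j,j'\in A$, so the perfect cycle $(A)$ is empty as a subvariety, hence zero as a class, and any product containing such a factor vanishes. For (1a), I would observe that in each chart $U_{a,b,c}$ the subvariety $(A)$ is cut out by linear equations $p_{j_1}=p_{j_2}$ for $j_1,j_2\in A$ (with the normalized values $0,1,\infty$ substituted whenever one of $a,b,c$ lies in $A$), and these involve only coordinates indexed by elements of $A$; the same holds for $(B)$. When $A\cap B=\emptyset$, the two linear systems use disjoint sets of variables, so the subvarieties meet transversely with set-theoretic intersection equal to the perfect cycle $(A)(B)$, giving $(A)\smile(B)=(A)\cdot(B)$. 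The proof of (1b) is the same argument applied to the pair $A\cup\{i\}$ and $B\cup\{i\}$: the coordinate of $i$ is the only shared variable, transversality persists, and the intersection is $(ABi)$.

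For the four-term relation (3), I would exploit the $\PSL_2$-invariant cross-ratio
\[
\chi \;=\; \frac{(p_i-p_k)(p_j-p_l)}{(p_i-p_l)(p_j-p_k)},
\]
which descends to a rational function on $\MMM_{0,K}$. A direct factorization yields
\[
\operatorname{div}(\chi)=(ik)+(jl)-(il)-(jk), \qquad \operatorname{div}(\chi-1)=(ij)+(kl)-(il)-(jk),
\]
the second line using the algebraic identity $(p_i-p_k)(p_j-p_l)-(p_i-p_l)(p_j-p_k)=(p_i-p_j)(p_k-p_l)$. Both divisors are principal, hence vanish in the Chow ring; subtracting yields $(ij)+(kl)=(ik)+(jl)=(il)+(jk)$, which implies the desired relation.

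The main obstacle is the careful case analysis of transversality and multiplicity for (1): in every chart $U_{a,b,c}$ one must check that the intersection of the two linear systems is a \emph{reduced} scheme of the expected codimension, including the subcases where one or more elements of $\{a,b,c\}$ lies in $A$ or $B$ (so that some defining equations become $p_j=0$, $p_j=1$ or $p_j=\infty$). One must also verify that perfect cycles which happen to be empty in a given chart do not contribute spurious components elsewhere. Once this chart-by-chart bookkeeping is settled, the cross-ratio computation for (3) is essentially a one-line divisor check.
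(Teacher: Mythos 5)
Your proposal is correct, but it splits from the paper on item (3). For items (1) and (2) you do exactly what the paper does: the paper's proof is the one-line observation that these are transversal intersections of holomorphically embedded subvarieties (with $(A)$ empty when $A\notin K$), and your chart-by-chart linear-equations argument in the $U_{a,b,c}$ is just that observation spelled out; your worry about reducedness is harmless since in each chart both cycles are intersections of coordinate hyperplane translates in disjoint (or almost disjoint) sets of variables, so the scheme-theoretic intersection is automatically reduced of the expected codimension. For the four-term relation (3), however, the paper does not argue directly: it defers the relation to Theorem \ref{ASDChow}, where it comes out of the identification of ${\bf A}^{*}(\MMM_{0,K})$ with a quotient of ${\bf A}^{*}_{univ}$, obtained by propagating the known polygon-space case of \cite{NP} through flips and the blow-up description of Lemma \ref{blowupVSflip}. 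Your cross-ratio argument is a genuinely different and more elementary route: since points of $\MMM_{0,K}$ are honest point configurations on $\mathbb{P}^1$ (no nodal curves), the $\PSL_2$-invariant cross-ratio descends to a rational function, and $\operatorname{div}(\chi-1)=(ij)+(kl)-(il)-(jk)$ is principal, hence zero in ${\bf A}^1$; the only checks needed are that $\chi$ is nonconstant (clear on the dense open $\MM_{0,n}$, $n\geq 4$), that each coincidence divisor occurs with multiplicity one (immediate in the charts), and that empty cycles $(ab)$ with $\{a,b\}\notin K$ contribute neither a component nor a class, which is consistent. What your route buys is a self-contained proof of (3) that does not rely on the later Chow-ring theorem — in fact it removes the slightly awkward forward reference in the paper, where the well-definedness of the map ${\bf A}^{*}_{univ}\to{\bf A}^{*}(\MMM_{0,K})$ implicitly uses this very relation; what the paper's route buys is that the relation drops out for free once the blow-up machinery needed for Theorem \ref{ASDChow} is in place.
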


\begin{proof}
In the cases (1) and (2)  we have a transversal intersection of holomorphically embedded complex varieties. The item (3) will be proven in Theorem \ref{ASDChow}.
\end{proof}

Examples:
$$(123)\cdot (345)=(12345);\ \ (12)\cdot (34) \cdot (23)=(1234).$$
A more sophisticated computation:
$$(12)\cdot(12) = (12)\cdot \big( (13) + (24) - (34) \big) = (123) + (124) -(12)\cdot(34).$$

\begin{proposition}\label{PropPerf}A cup product of  perfect cycles is a perfect cycle.

\end{proposition}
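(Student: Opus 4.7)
The plan is to reduce any product of perfect cycles to a $\Z$-linear combination of perfect cycles by iterating the computation rules of Proposition~\ref{ComputRules}, using the four-term relation as the main tool for the self-intersection case. A useful preliminary remark is that every perfect cycle of block size $\geq 2$ decomposes into elementary ones: applying rule (1)(b) repeatedly to $A = \{i_1, \dots, i_r\}$ gives $(A) = (i_1 i_2)(i_1 i_3)\cdots(i_1 i_r)$. Hence it is enough to prove that the product of an arbitrary perfect cycle $P = (A_1)\cdots(A_k)$ with one elementary perfect cycle $(ij)$ is again a perfect cycle (in the $\Z$-span sense); the full claim then follows by induction on the total number of elementary factors in the initial product.

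I would handle $P\cdot(ij)$ by a case analysis on how $\{i,j\}$ sits with respect to the partition $\{A_s\}$. If neither $i$ nor $j$ lies in any $A_s$, rule (1)(a) adjoins the new block $\{i, j\}$. If exactly one of $i, j$ lies in some $A_r$ and the other lies outside every block, rule (1)(b) enlarges $A_r$ by that element. If $i \in A_r$ and $j \in A_s$ with $r \neq s$, two applications of rule (1)(b) merge the two blocks into $A_r \cup A_s$. In each of these three cases the result is visibly a single perfect cycle.

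The main obstacle is the self-intersection case $\{i, j\} \subset A_r$, where Proposition~\ref{ComputRules}(1) does not apply directly. My plan is to invoke the four-term relation to write $(ij) = (jk) + (il) - (kl)$ for a pair $k, l \in [n]\setminus A_r$ chosen so that $\{k, l\}$ is not contained in any single block of the partition. Such a choice is always available: since $K$ is ASD and every singleton is a $0$-face of $K$, every face of $K$ has cardinality at most $n - 2$, so $[n]\setminus A_r$ has at least two elements; moreover, if $[n]\setminus A_r$ were contained in a single other block $A_{s_0}$, the partition would consist of $\{A_r, A_{s_0}\}$ with $A_r \sqcup A_{s_0} = [n]$, a decomposition into complementary faces that is forbidden by the ASD property, and hence $P = 0$ already. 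With $k, l$ selected as above, each of $P\cdot(jk), P\cdot(il), P\cdot(kl)$ falls into one of the three easy cases and is therefore a single perfect cycle, so the four-term relation expresses $P\cdot(ij)$ as a $\Z$-linear combination of perfect cycles. Iterating this reduction through the finitely many elementary factors of the original product completes the argument.
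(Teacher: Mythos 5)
Your proof is correct and follows essentially the same route as the paper's: decompose into elementary cycles, multiply by one elementary factor $(ij)$ at a time, handle the easy configurations with Proposition~\ref{ComputRules}(1), and resolve the self-intersection case $\{i,j\}\subset A_r$ via the four-term relation with $k,l$ chosen outside $A_r$ and not in a common block. The only difference is that you explicitly justify, using the ASD property, that such $k,l$ exist (or that $P=0$ otherwise), a point the paper leaves implicit.
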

\textit{Proof.}  Clearly, each perfect cycle is a product of elementary ones.
Let us prove that the product of two perfect cycles is an integer linear combination of   perfect cycles.
We may assume that the second factor is an elementary perfect cycle, say, $(12)$. Let the first factor be $(A_1)\cdot(A_2)\cdot(A_3)\cdot\ldots\cdot(A_k)$.

We need the following case analysis:
\begin{enumerate}
\item If at least one of $1,2$ does not belong to $\bigcup A_i$, the product is a perfect cycle by  Proposition \ref{ComputRules}, (1).
\item If $1$ and $2$ belong to different $A_i$, we use the following:

for any perfect cycle $(A_1)\cdot(A_2)$ with $ i\in A_1, j\in A_2$, we have
$$(A_1)\cdot(A_2)\smile (ij)=(A_1)\smile (ij)\smile (A_2)= (A_1j)\smile (A_2)= (A_1\cup A_2).$$

\item Finally, assume that $1,2 \in A_1$. Choose $i\notin A_1,\ \ j\notin A_1$ such that $i$ and $j$ do not belong to one and the same $A_k$. By Proposition \ref{ComputRules}, (3), $$(A_1)\cdot(A_2)\cdot(A_3)\cdot\dotso\cdot(A_k)\smile (12)=(A_1)\cdot(A_2)\cdot(A_3)\cdot\ldots\cdot(A_k)\smile \big((1i)+(2j)-(ij)\big).$$
After expanding the brackets, one reduces this to the above cases.\qed
\end{enumerate}
                                    
\begin{lemma}\label{LemmaTriple}
For an ASD complex $K$, let $A\sqcup B\sqcup C=[n]$ be a partition of $[n]$ into three faces. Then $(A)\cdot(B)\cdot(C)=1$ in the  graded component ${\bf A}^{n-3}_K$ of the Chow ring, which is canonically identified with $\mathbb{Z}$.
\end{lemma}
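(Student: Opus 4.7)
The plan is to compute the intersection $(A)\cap(B)\cdot(C)$ set-theoretically, show it consists of a single reduced point, and conclude that the triple product is $1$ in the top graded component.

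First I would do a dimension check: each cycle $(X)$ has complex codimension $|X|-1$ in $\MMM_{0,K}$, since it is cut out (locally, after fixing three of the marked points by $\mathrm{PSL}_2(\mathbb{C})$) by requiring the $|X|-1$ coordinates indexed by $X\setminus\{x_0\}$ to coincide with a chosen reference coordinate. Summing over the three blocks of the partition, the total codimension is $(|A|-1)+(|B|-1)+(|C|-1)=n-3$, which matches $\dim_{\mathbb C}\MMM_{0,K}$, so the product lives in the top piece $\mathbf{A}^{n-3}_K\cong\mathbb Z$.

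Next I would identify the set-theoretic intersection. A point of $(A)\cap(B)\cap(C)$ is a class of stable configuration $[p_1,\ldots,p_n]$ in which all $p_i$ with $i\in A$ coincide at some point $p_A\in\mathbb{P}^1$, and likewise $p_B$ for $B$ and $p_C$ for $C$. Since $A\sqcup B\sqcup C=[n]$, the configuration is determined by the triple $(p_A,p_B,p_C)$. Here the ASD property is decisive: if, say, $p_A=p_B$, then the collision set $A\cup B=C^{c}$ would have to be a face of $K$; but exactly one of $C$ and $C^{c}$ is a face, and $C$ is a face by hypothesis, so $C^{c}\notin K$. Thus $p_A,p_B,p_C$ must be pairwise distinct, and since $\mathrm{PSL}_2(\mathbb{C})$ acts simply transitively on ordered triples of distinct points of $\mathbb{P}^1$, the intersection consists of a single point.

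Finally I would verify that this intersection is transverse, so that the point carries multiplicity $1$. Pick $i\in A$, $j\in B$, $k\in C$ and work in the chart $U_{i,j,k}$ constructed in the proof of Theorem \ref{ThmSmoothComp}, where the residual coordinates are $\{p_\ell:\ell\in [n]\setminus\{i,j,k\}\}$ and the intersection point sits at $p_\ell=0$ for $\ell\in A\setminus\{i\}$, $p_\ell=1$ for $\ell\in B\setminus\{j\}$, $p_\ell=\infty$ for $\ell\in C\setminus\{k\}$. In this chart the three cycles are cut out by disjoint sets of coordinate equations, hence their tangent spaces at the intersection point meet transversally, with complementary codimensions summing to the dimension of the chart. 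Therefore the scheme-theoretic intersection is a reduced point, and $(A)\cdot(B)\cdot(C)=1$ in $\mathbf{A}^{n-3}_K$. I expect the only subtle point is the ASD-driven argument ruling out coincidences among $p_A,p_B,p_C$; the transversality step is then a direct coordinate computation.
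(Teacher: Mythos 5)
Your proof is correct and takes essentially the same approach as the paper, whose entire argument is the one-line observation that the cycles $(A)$, $(B)$, and $(C)$ intersect transversally at a unique point. You have merely supplied the details behind that sentence: the ASD property forcing the three collision points to be pairwise distinct (hence a unique $\mathrm{PSL}_2(\mathbb{C})$-orbit), and the coordinate computation in the chart $U_{i,j,k}$ establishing transversality and multiplicity $1$.
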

\begin{proof}  
Indeed, the cycles $(A)$, $(B)$, and $(C)$ intersect transversally at a unique point.
\end{proof} 

Now we see that the set of perfect cycles is closed under cup-product.  In the next section we  show that the Chow ring equals the ring of perfect cycles.

\medskip
\subsection{Flips and blow ups. }

Let  $K$ be an  ASD complex, and let $A \subset [n]$ be its facet. 

\begin{lemma}\label{LemmaPerfCyclProj}
The perfect cycle $(A)$ is isomorphic to $\MMM_{0, \mathcal{P}_{|A^c|+1}} \cong \mathbb{P}^{|A^c|-2}$.
\end{lemma}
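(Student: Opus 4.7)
The plan is to proceed in three short steps: reduce to a contraction, identify the contracted complex, and cite the model compactification from Example \ref{example}.

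First, I would invoke the identification, recorded immediately after the contraction construction, that each elementary perfect cycle $(A)_K$ is canonically isomorphic to the smaller compactification $\MMM_{0, K_A}$, where $K_A$ is the contraction of $K$ along the face $A$. This reduces the lemma to a purely combinatorial claim about the complex $K_A$, namely that $K_A \cong \mathcal{P}_{|A^c|+1}$.

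Second, I would verify this combinatorial claim directly from the definitions. Label the vertices of $K_A$ by $A^c$ together with the extra ``frozen'' vertex $\star$ standing for $A$, giving $|A^c|+1$ vertices in total. Using that $A$ is a facet of $K$ (in particular maximal with respect to inclusion) and that $K$ is ASD, I check the four defining conditions of $\mathbf{pt}_\star \sqcup \Delta^{|A^c|-2}_{|A^c|-1}$:
\begin{itemize}
\item $\{\star\}\in K_A$, because $A\in K$;
\item $\{\star,i\}\notin K_A$ for every $i\in A^c$, because $A\cup\{i\}\notin K$ by maximality of $A$;
\item every proper subset $B\subsetneq A^c$ is a face of $K_A$: its complement in $[n]$ equals $A\cup(A^c\setminus B)$, which strictly contains $A$ and therefore does not lie in $K$, so by Alexander self-duality $B\in K$;
\item $A^c\notin K_A$, because $A\in K$ and $K$ is ASD.
\end{itemize}
These four conditions exactly describe $\mathcal{P}_{|A^c|+1}$: the vertex $\star$ is isolated, and the full boundary of the simplex on $A^c$ appears.

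Third, I would apply Example \ref{example}(2), which identifies $\MMM_{0,\mathcal{P}_m}$ (the Hassett compactification with weights $(1,\varepsilon,\dots,\varepsilon)$) with $\mathbb{P}^{m-3}$. Specializing to $m=|A^c|+1$ yields the desired isomorphism $(A)\cong\MMM_{0,\mathcal{P}_{|A^c|+1}}\cong\mathbb{P}^{|A^c|-2}$.

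I do not expect a genuine obstacle. The only substantive point is the combinatorial check in the second step, and that step works precisely because the two hypotheses on $A$ are complementary: ``$A$ is a facet'' rules out every face strictly above $A$ (controlling which subsets of $A^c$ become non-faces once $A$ is frozen), while ``$K$ is ASD'' supplies all the remaining faces by complementation, so the resulting complex is forced to be $\mathcal{P}_{|A^c|+1}$.
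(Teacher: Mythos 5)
Your proposal is correct and follows the paper's own route: identify $(A)$ with the compactification of the contracted complex $K_A$, recognize $K_A$ as $\mathcal{P}_{|A^c|+1}={\bf pt}\sqcup\Delta_{|A^c|-1}^{|A^c|-2}$, and invoke Example \ref{example}(2). The paper's proof is a one-line version of this; your second step merely makes explicit the combinatorial check (using that $A$ is a facet and $K$ is ASD) that the paper leaves implicit.
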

\begin{proof} Contraction of $A$ gives the complex ${\bf pt}\;\sqcup\;\Delta_{|A^c|}^{} = \mathcal{P}_{|A^c|+1}$ from the Example \ref{example}, (2). \end{proof}

\begin{lemma}\label{blowupVSflip}
For  an ASD complex $K$  and its facet $A$, there are two blow up morphisms 
$$\pi_{ A }: \MMM_{0,K\backslash A} \rightarrow \MMM_{0, K} \text{ and }\pi_{A^c}: \MMM_{0,K\backslash A} \rightarrow \MMM_{0, \flip_{A} (K)}.$$
The centers of these blow ups are the perfect cycles $(A)$ and $(A^c)$ respectively.  The exceptional divisors are equal: $\D_{A} = \D_{A^c}$. Both are     isomorphic to $\MMM_{0, \mathcal{P}_{|A|+1}} \times \MMM_{0, \mathcal{P}_{|A^c|+1}} \cong \mathbb{P}^{|A|} \times \mathbb{P}^{|A^c|}$. The maps $\pi_{A}|_{\D_{A}}$ and $\pi_{A^c}|_{\D_{A^c}}$ are projections to the first and the second components respectively.
\end{lemma}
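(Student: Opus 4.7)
The plan is to construct the morphisms $\pi_A$ and $\pi_{A^c}$ by bubble contraction and then identify the exceptional divisor using the stable-configuration picture of Section \ref{ASDpreASD}. Since $K$ is ASD and $A \in K$ is a facet, $A^c \notin K$, and removing $A$ produces a preASD (non-ASD) complex $K\setminus A$ in which both $A$ and $A^c$ are non-faces while all their proper subsets remain faces. By the modular description of Section \ref{Smycom}, $\MMM_{0, K\setminus A}$ therefore carries a boundary divisor $\D_A$, absent in the two ASD moduli, consisting of two-component nodal curves whose one bubble carries all $A$-indexed marked points and whose other component carries all $A^c$-indexed ones.

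Off $\D_A$ the curve is smooth and its configuration is simultaneously $K$-stable and $\flip_A(K)$-stable, giving open embeddings into both targets. On $\D_A$, collapsing the $A$-bubble to its node puts all $A$-indexed sections at a single point of the main component, producing a $K$-stable configuration lying in the perfect cycle $(A)$; this extends the open embedding to the desired morphism $\pi_A$. Symmetrically, collapsing the $A^c$-bubble defines $\pi_{A^c}$. Both are birational morphisms of smooth proper varieties, isomorphisms away from $\D_A$, and they contract $\D_A$ onto the asserted centers.

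To identify $\D_A$ with $\MMM_{0, \mathcal{P}_{|A|+1}} \times \MMM_{0, \mathcal{P}_{|A^c|+1}}$, observe that a nodal curve of this boundary type is determined independently by the configuration on each of its two components, each component carrying its marked points together with the node as a distinguished non-marked point. On the $A$-bubble the node may not be approached by any $A$-indexed section, while arbitrary proper subsets of $A$-indexed points may coincide; this is precisely the moduli of $\mathcal{P}_{|A|+1}$-stable configurations with the node playing the role of the unique ``heavy'' vertex. The analogous statement for the $A^c$-component follows from the explicit computation of the contracted complex $K_A$ already carried out in the proof of Lemma \ref{LemmaPerfCyclProj}. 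Under $\pi_A|_{\D_A}$ the $A$-bubble is collapsed and only the location of the node on the $A^c$-component survives, so $\pi_A|_{\D_A}$ is the projection onto the second factor $\MMM_{0, \mathcal{P}_{|A^c|+1}} \cong (A)$; the case of $\pi_{A^c}$ is symmetric.

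The principal obstacle is upgrading $\pi_A$ from a birational contraction to a genuine blow-up. By the universal property of blow-ups it suffices to identify $\D_A$ with the projectivized normal bundle $\mathbb{P}(N_{(A)/\MMM_{0,K}})$. The natural way to verify this is a local computation in a chart $U_{i,j,k}$ with $i,j,k \in A^c$ around a general point of $(A)$: the cycle $(A)$ is cut out transversally by the equations $p_a = p_{a_0}$ for $a \in A$ and a chosen $a_0 \in A$, so the corresponding normal directions, taken up to overall rescaling, record precisely the $\PSL_2$-class of the emerging $A$-bubble configuration, matching the description of $\D_A$ above. Alternatively, one may invoke the classification of Smyth and Han \cite{Smyth13, Han15}, in which removing a facet from a smooth extremal assignment is by construction realized as the blow-up of the corresponding boundary stratum, directly yielding both $\pi_A$ and $\pi_{A^c}$.
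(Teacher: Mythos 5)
Your argument is correct and follows the same underlying geometry as the paper, but the paper's proof is essentially a citation: it observes that the argument of Hassett's Corollary 3.5 for reduction morphisms of weighted compactifications carries over verbatim, the only content spelled out being that curves which are $(K\setminus A)$-stable but not $K$-stable (resp.\ not $\flip_A(K)$-stable) are two-component nodal curves with the $A$-marked points on one component and the $A^c$-marked points on the other. You instead rederive the wall-crossing by hand: constructing $\pi_A,\pi_{A^c}$ by bubble contraction, identifying $\D_A$ with $\MMM_{0,\mathcal{P}_{|A|+1}}\times\MMM_{0,\mathcal{P}_{|A^c|+1}}$ via the contracted complexes, and verifying the blow-up structure by a local computation in a chart $U_{i,j,k}$, $i,j,k\in A^c$, where $(A)$ is the linear collision locus $\{p_a=p_{a_0},\ a\in A\}$. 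This buys self-containedness (in the spirit of the paper's own elementary treatment of Theorem \ref{ThmSmoothComp}), at the cost of details the paper delegates to \cite{Has03}. One point to tighten: the universal property of the blow-up is not "identify $\D_A$ with $\mathbb{P}(N_{(A)/\MMM_{0,K}})$" --- an abstract isomorphism of the exceptional divisor with the projectivized normal bundle does not by itself make a birational contraction a blow-up; what is needed is that the scheme-theoretic preimage of $(A)$ is a Cartier divisor (equivalently, that $\pi_A$ is locally the standard blow-up of the collision locus), which is exactly what your chart computation provides once written out, and also the route taken in Hassett's proof; alternatively your fallback to \cite{Smyth13,Han15} suffices. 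Incidentally, your computation also makes transparent that the fibers of the exceptional divisor are $\mathbb{P}^{|A|-2}$ and $\mathbb{P}^{|A^c|-2}$, i.e.\ $\D_A\cong\MMM_{0,\mathcal{P}_{|A|+1}}\times\MMM_{0,\mathcal{P}_{|A^c|+1}}\cong\mathbb{P}^{|A|-2}\times\mathbb{P}^{|A^c|-2}$, consistent with Lemma \ref{LemmaPerfCyclProj} (the exponents $|A|,|A^c|$ in the statement are a typo).
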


The \textit{proof} literally repeats  \cite[Corollary 3.5]{Has03}: $K$--stable but not $K_{A}$($K_{A^c}$)--stable  curves have two connected components. The  marked points with indices from the set $A$  lie on one of the irreducible components, and  marked points with indices from the set $A^c$ lie on the other. \qed

\begin{corollary}
For an ASD complex $K$ and its  facet $A$, the algebraic varieties $\MMM_{0, K}$ and $\MMM_{0, K \backslash A}$ are HI--schemes, i.e., the canonical map from the Chow ring to the cohomology ring is an isomorphism.
\end{corollary}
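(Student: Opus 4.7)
The plan is to reduce the statement to the assertion that $\MMM_{0,K}$ itself is an HI--scheme, and then prove the latter by induction along the flip graph, taking polygon spaces as the base case.

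First I would set up the reduction. By Lemma \ref{blowupVSflip}, the map $\pi_A\colon \MMM_{0, K\setminus A}\to \MMM_{0, K}$ is the blow up along the smooth subvariety $(A)\cong \mathbb{P}^{|A^c|-2}$. Projective spaces are trivially HI, and the HI property is preserved under blow ups along smooth HI centers: Keel's blow-up formula gives a decomposition
$$\textbf{A}^*(\MMM_{0,K\setminus A}) = \pi_A^*\,\textbf{A}^*(\MMM_{0, K}) \oplus \bigoplus_{i=1}^{|A|-1} \textbf{A}^{*-i}\bigl((A)\bigr)\cdot \xi^i,$$
where $\xi$ is the exceptional class, and the analogous formula holds for singular cohomology; the two decompositions are matched by the cycle class map. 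Hence, once we know that $\MMM_{0, K}$ is HI, the corollary follows for $\MMM_{0, K\setminus A}$.

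Next I would establish that $\MMM_{0, K}$ is HI for \emph{every} ASD complex $K$, by induction on the minimal number of flips needed to connect $K$ to a threshold ASD complex, which exists by Proposition \ref{LemmaNotLonger2}(3). For the base case, when $K$ is threshold, $\MMM_{0, K}$ is a polygon space $M_\mathcal{L}$, which is an HI--scheme by the explicit cohomology ring computations of Hausmann--Knutson \cite{HKn} and Klyachko \cite{Kl}. For the inductive step, suppose $K' = \flip_A(K)$ with $|A|, |A^c|>2$, so that both centers $(A)\cong \mathbb{P}^{|A^c|-2}$ and $(A^c)\cong \mathbb{P}^{|A|-2}$ appearing in Lemma \ref{blowupVSflip} are honest projective spaces, and assume inductively that $\MMM_{0, K}$ is HI. The forward direction of Keel's formula applied to $\pi_A$ shows that the common blow up $\MMM_{0, K\setminus A}$ is HI. The morphism $\pi_{A^c}\colon \MMM_{0, K\setminus A}\to \MMM_{0, K'}$ is also a blow up along a projective-space center; since the pullback $\pi_{A^c}^*$ is injective and splits off the exceptional summand controlled by the HI center $(A^c)$, the complement of this summand inside the HI ring $\textbf{A}^*(\MMM_{0, K\setminus A})$ is identified compatibly with the corresponding subring of $H^*(\MMM_{0, K\setminus A})$, yielding the HI property for $\MMM_{0, K'}$.

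The main obstacle is the reverse direction of Keel's formula: passing from HI of the blow up back to HI of the base. This requires verifying that the direct-sum decomposition $\textbf{A}^*(Bl) = \pi^*\textbf{A}^*(Y)\oplus E$ provided by the projective bundle formula on the exceptional divisor is compatible, summand by summand, with its cohomological analogue under the cycle class map. This compatibility is standard once one observes that the exceptional divisor $\D_A = \D_{A^c}$ is itself a product of projective spaces (hence HI) by Lemma \ref{blowupVSflip}, but it should be spelled out carefully, since the bookkeeping is the only non-formal ingredient in the whole argument.
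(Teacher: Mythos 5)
Your proposal is correct and follows essentially the same route as the paper: the paper's proof is exactly the induction along flips from the threshold (polygon-space) case, using Lemma \ref{blowupVSflip} together with Keel's HI-transfer result (Theorem \ref{ChowCoh}), whose two bullets give precisely the forward and reverse blow-up directions you argue. The only difference is that you re-derive the transfer of the HI property under blow ups via the explicit Chow/cohomology decompositions, whereas the paper simply cites Theorem \ref{ChowCoh}, so no new idea is needed beyond what you wrote.
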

\textit{Proof.} This follows from Lemma \ref{blowupVSflip}  and Theorem~\ref{ChowCoh}.\qed

\section{Chow rings of ASD compactifications}

As it was already mentioned, many examples of ASD compactifications  are polygon spaces, that is, come from a threshold ASD complex. Their Chow rings  were computed in \cite{HKn}. A more relevant to the present paper presentation of the ring is given in  \cite{NP}. We recall it below.

\begin{definition}
Let ${\bf A}^{*}_{univ} = {\bf A}^{*}_{univ, n}$ be the ring 
$$\mathbb{Z}\big[(I):\; I \subset [n], 2 \leq |I| \leq n-2 \big]$$
factorized by  relations:
\begin{enumerate}
\item ``\textit{The four-term relations}'': $(ij)+(kl)-(ik)-(jl)=0$ for any $i,j,k,l \in [n]$.

\item ``\textit{The multiplication rule}'': $(Ik)\cdot(Jk) = (IJk)$ for any disjoint $I, J \subset [n]$ not containing element $k$.

\end{enumerate}
\end{definition}

There is a natural graded ring homomorphism from ${\bf A}^*_{univ}$  to the Chow ring of an ASD-compactification that sends each of the generators $(I)$ to the corresponding perfect cycle.

\begin{theorem}\cite{NP}\label{ChowForPolygons}
The Chow ring (it equals the cohomology ring) of a polygon space equals the ring  ${\bf A}^*_{univ}$  factorized by 
$$(I)=0 \ \ \ \hbox{whenever $I$ is a long set.}$$
\end{theorem}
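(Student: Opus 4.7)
The plan is to establish this isomorphism by checking well-definedness, surjectivity, and injectivity of the natural homomorphism from the quotient of ${\bf A}^*_{univ}$ by the long-set relations onto the Chow ring ${\bf A}^*(M_{\mathcal{L}})$, relying on the prior Hausmann--Knutson \cite{HKn} and Klyachko \cite{Kl} computations as the comparison target.

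First, I would verify that the presentation is well-defined. The natural map sending each formal generator $(I)$ to the corresponding geometric perfect cycle on $M_{\mathcal{L}} \cong \MMM_{0,SHORT(\mathcal{L})}$ automatically respects the listed relations: the four-term relation is exactly Proposition~\ref{ComputRules}(3), the multiplication rule is part (1b), and for a threshold ASD complex the long sets coincide with the non-faces, so $(I)=0$ whenever $I$ is long by Proposition~\ref{ComputRules}(2). Hence the map descends to a well-defined ring homomorphism $\phi$ from the candidate ring to the Chow ring.

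Second, for surjectivity I would argue that the elementary perfect cycles $(ij)$ together with the multiplication rule exhaust the generators known from \cite{HKn}. Concretely, the Hausmann--Knutson presentation is built from degree-two classes $R, V_1, \dots, V_n$ arising as Chern classes of the natural circle bundles attached to the bars of the polygon, and differences of the $V_i$ can be identified (up to sign) with elementary perfect cycles $(ij)$ since the locus $p_i = p_j$ is the divisor where the corresponding edges become parallel. Once the Hausmann--Knutson generators are realized as $\mathbb{Z}$-linear combinations of the $(ij)$, the surjectivity of $\phi$ follows.

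Third, and this is the main obstacle, comes injectivity. I see two viable strategies. Strategy (a): match Hilbert series --- compute the graded dimensions of the presentation ring by producing a monomial basis in perfect cycles (using the multiplication rule to reduce to products of pairwise disjoint faces, and the four-term relation plus long-set relations to cut down the remaining span) and compare with Klyachko's explicit Betti number formula for $M_{\mathcal{L}}$. Strategy (b): set up an explicit ring isomorphism with the Hausmann--Knutson presentation by expressing the generators $(ij)$ in terms of $V_i - V_j$, then translating the long-set relations in one presentation into those of the other. Strategy (b) is probably cleaner because the Hausmann--Knutson relations are already organized around long sets, but it requires careful bookkeeping to confirm that the ideal generated by long-set relations in one presentation maps onto the corresponding ideal in the other. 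I expect most of the real work to live in this comparison, since the combinatorial content of ``short versus long'' has to be shown to encode the same geometric vanishing in both presentations; once this is done, the theorem follows immediately.
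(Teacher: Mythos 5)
There is nothing in the paper to compare your proposal against: Theorem~\ref{ChowForPolygons} is imported from \cite{NP} and is used here as a black box (it is the base case of the induction proving Theorem~\ref{ASDChow}), so your sketch has to stand on its own. As it stands it has two concrete problems. First, a circularity: you justify well-definedness of the four-term relation by Proposition~\ref{ComputRules}(3), but in this paper that item is explicitly deferred to the proof of Theorem~\ref{ASDChow}, whose flip induction is anchored precisely at the polygon-space case you are trying to prove. To avoid the loop you must use an argument independent of Theorem~\ref{ASDChow}, e.g.\ the $\psi$-class section argument of Proposition~\ref{Ch2}, or a direct geometric verification inside $M_{\mathcal{L}}$. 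Second, the concrete dictionary you propose for surjectivity is wrong: in the Hausmann--Knutson presentation the degree-two generator $V_i$ is Poincar\'e dual to the locus where the $i$-th edge is parallel to a fixed edge (say the $n$-th), i.e.\ $V_i=(in)$, and a general elementary cycle satisfies $(ij)=V_i+V_j-\psi_n$, where $\psi_n$ is a multiple of the remaining generator $R$; it is not $\pm(V_i-V_j)$. Differences of the $V_i$ can never produce $R$, so with your identification the map you build would not even be surjective onto the Hausmann--Knutson ring, and the surjectivity step does not close as written.

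The more serious gap is that the actual content of the theorem --- injectivity, i.e.\ that the ideal generated by $(I)$ for $I$ long, together with the four-term and multiplication relations, is the \emph{entire} kernel --- is only named, not proved. Your Strategy (b) requires showing that under the dictionary above the long-set relations generate an ideal containing the Hausmann--Knutson relations, which include not only the monomial relations $\prod_{i\in L}V_i$ for long $L$ but also quadratic relations of the form $V_i^2+RV_i$ and sum-type relations indexed by long sets; nothing in the sketch addresses these. Your Strategy (a) requires exhibiting a spanning set of the combinatorial quotient whose size matches Klyachko's Betti numbers (equivalently the Poincar\'e polynomial of Theorem~\ref{PoiASD}), which again is not carried out; note that reducing monomials in perfect cycles to a normal form is itself nontrivial (compare the reduction arguments in Lemma~\ref{kernels}). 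So the proposal is a reasonable plan --- essentially the program carried out in \cite{NP} --- but the step where all the mathematical work lives is missing, and the one explicit computation offered (the identification of generators) is incorrect.
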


The following generalization of Theorem \ref{ChowForPolygons} is the first main result of the paper:

\begin{theorem}\label{ASDChow}
For an ASD complex $L$, the Chow ring ${\bf A}^{*}_{L}:= {\bf A}^{*}(\MMM_{0, L})$ of the moduli space $\MMM_{0, L}$ is isomorphic to the quotient  ${\bf A}^{*}_{univ}$ by the ideal $\mathcal{I}_{L}:= \big\langle (I):\;I\not\in L \big\rangle$.

\end{theorem}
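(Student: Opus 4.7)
The plan is to establish Theorem \ref{ASDChow} by induction along a flip sequence connecting $L$ to a threshold ASD complex, using Theorem \ref{ChowForPolygons} as the base case and the blow-up description of Lemma \ref{blowupVSflip} as the inductive engine. First, I define the natural graded ring homomorphism
\[
\Phi_L \;:\; {\bf A}^{*}_{univ}\big/\mathcal{I}_L \;\longrightarrow\; {\bf A}^{*}_L,\qquad (I)\longmapsto [(I)_L],
\]
sending each generator to the Poincar\'e dual of the corresponding perfect cycle. Well-definedness is immediate from Proposition \ref{ComputRules}: part (3) supplies the four-term relations, part (1b) supplies the multiplication rule, and part (2) kills the generators $(I)$ for $I\notin L$. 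The theorem asserts that $\Phi_L$ is an isomorphism; surjectivity will fall out of the inductive construction, since Proposition \ref{PropPerf} together with the blow-up formula will show that perfect cycles span ${\bf A}^{*}_L$.

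By Proposition \ref{LemmaNotLonger2}(3) every ASD complex is linked to some threshold complex by a finite sequence of flips whose facets $A_i$ satisfy $|A_i|,|A_i^c|>2$, so it suffices to handle the base case and a single flip. The base case is exactly Theorem \ref{ChowForPolygons}: when $L=SHORT(\mathcal{L})$ is threshold, the non-faces of $L$ are precisely the long sets, hence ${\bf A}^{*}_{univ}/\mathcal{I}_L$ coincides with the presentation of that theorem and $\Phi_L$ is an isomorphism.

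For the inductive step, let $K$ be an ASD complex for which $\Phi_K$ is an isomorphism, let $A$ be a facet with $|A|,|A^c|>2$, and set $L=\flip_A(K)$. By Lemma \ref{blowupVSflip} the variety $\MMM_{0,K\setminus A}$ is simultaneously the blow-up of $\MMM_{0,K}$ along $(A)_K$ and of $\MMM_{0,L}$ along $(A^c)_L$, with common exceptional divisor $\D_A=\D_{A^c}$, and by Lemma \ref{LemmaPerfCyclProj} the two centers are projective spaces. Applying Keel's standard presentation for the Chow ring of a blow-up along a smoothly embedded subvariety then yields two simultaneous descriptions of ${\bf A}^{*}_{K\setminus A}$: one adjoins the class of $\D_A$ to ${\bf A}^{*}_K$ subject to the normal bundle and kernel relations, the other does the analogous thing with $\D_{A^c}$ over ${\bf A}^{*}_L$. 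Combining the first presentation with the inductive hypothesis and the identification $[\D_A]=(A^c)$ produces an explicit presentation of ${\bf A}^{*}_{K\setminus A}$ as a quotient of ${\bf A}^{*}_{univ}$; imposing the second blow-down relation $[\D_{A^c}]=0$ on the perfect cycles supported on $(A^c)_L$ cuts this down to the claimed presentation of ${\bf A}^{*}_L$.

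The principal obstacle is matching Keel's normal bundle relation to the combinatorial relations of ${\bf A}^{*}_{univ}$. Concretely, one must compute the Chern classes of $\mathcal{N}_{(A)_K/\MMM_{0,K}}$ in terms of the perfect cycles $(Ai)$ with $i\notin A$, and verify that the resulting polynomial identity $P_{\mathcal{N}}(-\D_A)=0$, together with the kernel relations, is already a consequence of the four-term and multiplication rules after the substitution $[\D_A]=(A^c)$. The codimension hypothesis $|A|,|A^c|>2$ keeps both centers of dimension at least one and makes these identities nondegenerate, which is precisely why Proposition \ref{LemmaNotLonger2}(3) is the right combinatorial input: it ensures the flip sequence never leaves the regime in which the Keel presentation is directly applicable.
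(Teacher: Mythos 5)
Your strategy is the same as the paper's: base case the threshold complexes via Theorem \ref{ChowForPolygons}, then induct along a flip sequence (Proposition \ref{LemmaNotLonger2}(3)), using the double blow-up description of Lemma \ref{blowupVSflip} and Keel's presentation (Theorem \ref{NewChowRing}) for the inductive step, with the natural map $\Phi_L$ sending generators to perfect cycles. The well-definedness of $\Phi_L$ via Proposition \ref{ComputRules} is fine. However, the two steps that carry essentially all of the content of the inductive step are announced rather than proved, and as written they are genuine gaps. For surjectivity you claim that Proposition \ref{PropPerf} together with the blow-up formula shows perfect cycles span ${\bf A}^{*}_{L}$; but Proposition \ref{PropPerf} only says the set of perfect cycles is closed under cup product, not that they generate the Chow ring. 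The paper has to prove generation: Lemma \ref{OneGen} shows that ${\bf A}^{*}_{K+A,alg}$ is generated by its first graded component (a nontrivial argument inside Keel's presentation, using that $\ker(i_{A}^{*})=\Ann(A)_{alg}$ sits in positive degree and that the coefficients of the Chern polynomial are degree-one classes), and Lemma \ref{FirstGroup} shows $\alpha$ is an isomorphism in degree one by pushing the split exact sequences of Theorem \ref{SESBlow} through the flip. Your proposal offers no substitute for either.

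For injectivity, your plan to ``impose the blow-down relation $[\D_{A^c}]=0$'' from the $\MMM_{0,L}$ side is circular as stated: Keel's presentation over ${\bf A}^{*}_{L}$ requires knowing $\ker(i_{A^c}^{*})$, i.e.\ the annihilator of the center inside the very ring ${\bf A}^{*}_{L}$ you are trying to compute. The paper escapes this by identifying that kernel combinatorially: Lemma \ref{kernels} proves that $\Ann(A)_{comb}$ is generated in degree one by the explicit classes $(ab)$ with $a\in A$, $b\in A^{c}$, and $(a_{1}a_{2})-(b_{1}b_{2})$, and that it agrees with $\Ann(A)_{alg}$; and Lemma \ref{PullingUp} computes the normal-bundle Chern polynomial by exhibiting $(A)$ as the complete intersection of the divisors $(xx_{2}),\dots,(xx_{a-1})$, giving $f_{A}(T)=\prod_{i}\big(T+(xx_{i})\big)=\pi_{A}^{*}(A)$, which is exactly the verification that Keel's relation is a consequence of the four-term and multiplication rules. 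You explicitly flag both of these as ``the principal obstacle'' but do not carry them out, so the proposal is a correct outline of the paper's argument rather than a complete proof; filling it in would require proving statements equivalent to Lemmas \ref{OneGen}, \ref{FirstGroup}, \ref{PullingUp}, and \ref{kernels}.
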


The  idea of the proof is: the claim is true for threshold ASD complexes (i.e., for polygon spaces), and each ASD complex is achievable from a threshold ASD complex  by  a sequence of flips. Therefore it is sufficient to look at a unique flip.
Let us consider an ASD complex $K + B$ where $B\notin K$ is a  facet in $K + B$.  Set $A:=[n]\setminus B$, and consider the ASD complex  $K + A= \flip_{B}(K+B)$.
\begin{center}
\begin{tikzcd}[column sep=small]
& K \arrow[dl, hook] \arrow[dr, hook] & \\
K+ B \arrow[rr, dashrightarrow,  "\flip_{B}"] & & K+A
\end{tikzcd}
\end{center}

We are going to prove that if the claim of the theorem  holds true for $K+B$, then it also holds for $K+A$.

\medskip

 By Lemma \ref{blowupVSflip},
 the space $\MMM_{0, K}$ is the blow up of $\MMM_{0, K+B}$ along the subvariety $(B)$ and the blow up of $\MMM_{0, K+A}$ along the subvariety $(A)$. The diagram of the blow ups looks as follows:
\begin{center}
\begin{tikzcd}
(B)\arrow[d, hook, "i_{B}"] & \D \arrow[hook]{d}{j_{A} = j_{B}} \arrow{r}{g_{A}}\arrow{l}{g_{B}} & (A) \arrow[hook]{d}{i_{A}} \\
 \MMM_{0, K+B} & \MMM_{0, K} \arrow[twoheadrightarrow]{r}{\pi_{A}}\arrow[l, two heads, "\pi_{B}"]  & \MMM_{0, K+A }
\end{tikzcd}
\end{center}
The induced diagram of Chow rings is:
\begin{center}
\begin{tikzcd}
{\bf A}^{*}_{(B)} = {\bf A}^{*}_{\mathcal{P}_{|A|+1}} \arrow{r}{g_{B}^{*}} & {\bf A}^{*}_{\mathcal{P}_{|A|+1}}\times {\bf A}^{*}_{\mathcal{P}_{|B|+1}}  & {\bf A}^{*}_{(A)} = {\bf A}^{*}_{\mathcal{P}_{|B|+1}}  \arrow{l}{g_{A}^{*}} \\
 {\bf A}^{*}_{K + B}\arrow[u, "i_{B}^{*}"]\arrow[r, hookrightarrow, "\pi_{B}^{*}" description] & {\bf A}^{*}_{K} \arrow[hook]{u}{j_{A}^{*} = j_{B}^{*}}  & {\bf A}^{*}_{K + A} \arrow[l, hookrightarrow ,  "\pi_{A}^{*}" description]\arrow[u, "i_{A}^{*}" description]
\end{tikzcd}
\end{center}

Let  ${\bf A}^{*}_{K+ A, comb}$  be the quotient of ${\bf A}^{*}_{univ}$ by the ideal $\mathcal{I}_{K+ A}$.
We have  a natural graded ring homomorphism  $$\alpha = \alpha_{K+A}:{\bf A}^{*}_{K+ A, comb} \rightarrow {\bf A}^{*}_{K+ A}=:{\bf A}^{*}_{K+ A, alg},$$  where the map $\alpha$ sends each symbol $(I)$ to the associated perfect cycle.

A remark on notation: as a general rule, all objects related to ${\bf A}^{*}_{K+ A, comb}$ we mark with a subscript \textit{``comb''}, and objects related to ${\bf A}^{*}_{K+ A, alg}$  we mark with \textit{``alg''}. 
\medskip
\medskip

We shall show that $\alpha$ is an isomorphism.
The outline of the proof is:

\begin{enumerate}
\item  The ring ${\bf A}^{*}_{K+A,alg}$ is generated by the first graded component. (The ring ${\bf A}^{*}_{K+A,comb}$ is also generated by the first graded component; this is clear by construction.) 
\item The restriction of $\alpha$ to the first graded components 
is a group isomorphism. Therefore, $\alpha$ is surjective.
\item The map $\alpha$ is injective.
\end{enumerate}

\medskip

\begin{lemma} \label{OneGen} The ring ${\bf A}^{*}_{K+A, alg}$   is generated by the group ${\bf A}^{1}_{K+A, alg}$. \end{lemma}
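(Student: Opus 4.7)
The plan is to leverage the blow-up diagram of Lemma~\ref{blowupVSflip} together with the running inductive hypothesis of Theorem~\ref{ASDChow}. Under that hypothesis ${\bf A}^{*}_{K+B,\,alg}\cong{\bf A}^{*}_{univ}/\mathcal{I}_{K+B}$, and this combinatorial ring is visibly generated in degree~$1$: the multiplication rule $(Ik)\cdot(Jk)=(IJk)$ rewrites every generator $(I)$ with $|I|\ge 3$ as a product of degree-$1$ generators $(ij)$, so ${\bf A}^{*}_{K+B,\,alg}$ is generated by ${\bf A}^{1}_{K+B,\,alg}$.

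First I would transfer the one-generation property to ${\bf A}^{*}_{K,\,alg}$. By Lemma~\ref{blowupVSflip}, $\pi_B:\MMM_{0,K}\to\MMM_{0,K+B}$ is the blow-up of a smooth variety along the smooth center $(B)\cong\mathbb{P}^{|A|-2}$. The standard blow-up formula for Chow rings of smooth varieties then presents ${\bf A}^{*}_{K}$ as a $\mathbb{Z}$-module direct sum $\pi_B^{*}({\bf A}^{*}_{K+B,\,alg})\oplus\mathcal{E}_B$, where $\mathcal{E}_B$ is an ideal whose $\mathbb{Z}$-module generators come from the Chow ring of $(B)\cong\mathbb{P}^{|A|-2}$ and powers of the exceptional divisor. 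Since both $\pi_B^{*}({\bf A}^{*}_{K+B,\,alg})$ and ${\bf A}^{*}(\mathbb{P}^{|A|-2})$ are generated in degree~$1$, the ring ${\bf A}^{*}_{K}$ is itself generated by~${\bf A}^{1}_{K}$.

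Next I would descend from $K$ to $K+A$ via $\pi_A:\MMM_{0,K}\to\MMM_{0,K+A}$. The same formula yields a decomposition ${\bf A}^{*}_{K}=\pi_A^{*}({\bf A}^{*}_{K+A,\,alg})\oplus\mathcal{E}_A$, where $\mathcal{E}_A$ is a two-sided ideal containing every monomial in which the exceptional divisor $\D_A$ appears; in particular ${\bf A}^{1}_{K}=\pi_A^{*}({\bf A}^{1}_{K+A,\,alg})\oplus\mathbb{Z}[\D_A]$. Given any $\eta\in{\bf A}^{*}_{K+A,\,alg}$, the class $\pi_A^{*}\eta$ is a polynomial in elements of ${\bf A}^{1}_{K}$ by the previous step; substituting each degree-$1$ factor as $\pi_A^{*}\beta+m[\D_A]$ and expanding yields
\[
\pi_A^{*}\eta\;=\;\pi_A^{*}(P)+Q,
\]
with $P$ a polynomial in degree-$1$ elements of ${\bf A}^{*}_{K+A,\,alg}$ and every monomial of $Q$ carrying at least one factor $[\D_A]$, so $Q\in\mathcal{E}_A$. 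The direct-sum decomposition then forces $Q=0$, hence $\pi_A^{*}\eta=\pi_A^{*}P$, and injectivity of $\pi_A^{*}$ (which follows from $\pi_{A*}\pi_A^{*}=\mathrm{id}$) gives $\eta=P$.

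The hard part will be the book-keeping around the blow-up formula: one must verify that $\mathcal{E}_A$ is a two-sided ideal---so that every monomial touching $[\D_A]$ is genuinely exceptional---and that $\pi_A^{*}({\bf A}^{*}_{K+A,\,alg})\cap\mathcal{E}_A=0$. Both features are standard consequences of the blow-up formula for Chow rings of smooth varieties (the ideal property via the projection formula for the inclusion of the exceptional divisor), but they are where all the intersection-theoretic content is concentrated; once they are in hand the argument is essentially formal, and the conceptual core is the symmetric exploitation of the two blow-ups to go $K+B\leadsto K\leadsto K+A$.
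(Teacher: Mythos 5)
Your first two steps are sound and run parallel to the paper: by the inductive hypothesis ${\bf A}^{*}_{K+B}$ is generated in degree one, and since Keel's presentation (Theorem \ref{NewChowRing}) exhibits ${\bf A}^{*}_{K}$ as generated over $\pi_{B}^{*}{\bf A}^{*}_{K+B}$ by the single degree-one class of the exceptional divisor, ${\bf A}^{*}_{K}$ is generated in degree one. The gap is in the descent step, precisely at the point you flag as ``the hard part'': the exceptional summand $\mathcal{E}_{A}$ of the blow-up decomposition ${\bf A}^{*}_{K}\cong \pi_{A}^{*}{\bf A}^{*}_{K+A}\oplus\mathcal{E}_{A}$ (where $\mathcal{E}_{A}=\bigoplus_{i=0}^{d-2}\theta_{*}\bigl(\tau^{*}{\bf A}^{*}((A))\cdot \xi^{i}\bigr)$, $d=\mathrm{codim}\,(A)$) is \emph{not} an ideal, and it is false that every monomial containing a factor $\D_{A}$ lies in it. The projection formula only shows that $\theta_{*}({\bf A}^{*}(\D_{A}))$ is an ideal, and that group is strictly larger than $\mathcal{E}_{A}$ and meets $\pi_{A}^{*}{\bf A}^{*}_{K+A}$ nontrivially: by the key formula $\pi^{*}\iota_{*}(x)=\theta_{*}(c_{d-1}(E)\cap\tau^{*}x)$ (Theorem \ref{SESBlow}a), a product of classes supported on $\D_{A}$ acquires a component in $\pi_{A}^{*}{\bf A}^{*}_{K+A}$. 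The simplest counterexample is the blow-up of a point on a surface, where $[\D]^{2}=-\pi^{*}[\mathrm{pt}]$; in the present setting the ideal generated by $\D_{A}$ contains $\pi_{A}^{*}(A)\neq 0$ (this is visible from the relation $f_{A}(T)=0$ in Keel's presentation, or from Lemma \ref{PullingUp}(3)). Hence your conclusion ``$Q\in\mathcal{E}_{A}$, so the direct sum forces $Q=0$'' does not hold.

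If one repairs this by splitting $Q=\pi_{A}^{*}q_{1}+q_{2}$ with $q_{2}\in\mathcal{E}_{A}$, the direct sum only yields $\eta=P+q_{1}$, where $q_{1}$ is assembled from push-forwards $\iota_{A*}$ of classes on $(A)\cong\mathbb{P}^{|B|-2}$ and from coefficients of the Chern polynomial of $N_{(A)}\MMM_{0,K+A}$ --- and showing that these are themselves generated in degree one is exactly the remaining content of the lemma, not a formality. This is where the paper's argument does real work: it uses that the coefficients of $f_{A}(T)$ are products of the divisors $(xx_{i})$ (hence degree-one generated), that $\ker(i_{A}^{*})=\Ann(A)_{alg}$ sits in strictly positive degree, and then runs a minimal-degree induction inside the presentation ${\bf A}^{*}_{K}\cong {\bf A}^{*}_{K+A}[T]/\bigl(f_{A}(T),\,T\cdot\ker(i_{A}^{*})\bigr)$ to force a contradiction. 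Your write-up needs an analogous ingredient (e.g.\ surjectivity of $i_{A}^{*}$ in degree one plus the projection formula to handle $\iota_{A*}(h^{j})$) before the descent from $K$ to $K+A$ is complete.
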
 

\begin{proof}
By Theorem \ref{NewChowRing}
$${\bf A}^{*}_{K} \cong \frac{ {\bf A}^{*}_{K+A, alg}[T]}{\big( f_{A}(T), T\cdot \ker (i_{A}^{*})\big)}.$$

Observe that:

\begin{itemize}
\item The zero graded components of ${\bf A}^{*}_{K+A, alg}, {\bf A}^{*}_{K+A, comb}$ equals $\mathbb{Z}$.
\item The  map $\pi_{A}^*:\;{\bf A}^{*}_{K+A, alg} \rightarrow {\bf A}^{*}_{K}$ is a homomorphism of graded rings. Moreover, the variable $T$ stands for the additive inverse of the class of the exceptional divisor $\D$. And so, $T$ a degree one homogeneous element.

\item  Since $i_{A}^*$ is the multiplication by the cycle $(A)$, the kernel $\ker(i_{A}^{*})$ equals the annihilator $\Ann(A)_{alg}$ in the ring ${\bf A}^{*}_{K+A, alg}$. {Since the space $\MMM_{0, K+A}$ is an HI-scheme}, the degree of the ideal $\Ann(A)_{alg}$ is strictly positive.

\item The  polynomial $f_{A}(T)$ is  a homogeneous element whose degree  equals the degree $\deg_{T}(f_{A}(T))$. Besides, its coefficients are generated by elements from the first graded component since they all belong  to the ring $\alpha ({\bf A}^{*}_{K+A, comb})$.
\end{itemize}

Denote by $\langle{\bf A}^1_{K+A, alg}\rangle$ the subalgebra of ${\bf A}_{K+A, alg}$ generated by the first graded component.

First observe that the restriction of the map ${\bf A}^{*}_{K+A, alg}[T] \rightarrow {\bf A}^{*}_{K}$ to the first graded components is injective. 

Assuming that the lemma is not true,
 consider a homogeneous element $r$ of the ring ${\bf A}^{*}_{K+A, alg}$ with minimal degree among all not belonging to  $\langle{\bf A}^{1}_{K+A, alg}\rangle$.
There exist elements $b_{i}\in  \langle{\bf A}^{1}_{K+A, alg}\rangle$   such that $b_{p}\cdot T^{p} + \dots + b_{1}\cdot T + b_{0} = r$ in the ring ${\bf A}^{*}_{K+A, alg}[T]$. The elements   $b_{i}$ are necessarily homogeneous.

Equivalently, $b_{p}\cdot T^{p} + \dots + b_{1}\cdot T + b_{0} - r$ belongs to the ideal $\big(f_A(T), T\cdot \Ann(A_{alg})\big)$.  Therefore $b_{p}\cdot T^{p} + \dots + b_{1}\cdot T + b_{0} - r=x\cdot f_A(T) + y \cdot T \cdot i$ with  some $x, y \in R[T]$ and $i \in \Ann(A_{alg})$.

Setting $T=0$, we get  $b_{0} - r = x_{0}\cdot f_{0}$. If the element $x_{0}$ belongs to  $\langle{\bf A}^{1}_{K+A, alg}\rangle$, then we are done. Assume  the contrary. Then from the minimality assumption  we get the following inequalities: $\deg (b_0 - r) = \deg (x_{0}\cdot f_{0}) > \deg(x_{0}) \geq \deg(r)$. A contradiction.
\end{proof}

\begin{lemma} \label{FirstGroup}
For any ASD complex $L$ the groups ${\bf A}^1_{L, comb}$ and ${\bf A}^1_{L, alg}$ are isomorphic. The isomorphism is induced by the homomorphism $\alpha_{L}$.
\end{lemma}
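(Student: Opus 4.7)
The plan is to prove the lemma by induction along a chain of flips terminating at a threshold ASD complex, where Theorem \ref{ChowForPolygons} already yields the full ring isomorphism (and hence its restriction to the first graded component). By Proposition \ref{LemmaNotLonger2}(3), every ASD complex $L$ is connected to some threshold complex $L'$ by flips $\flip_{A_i}$ with $|A_i|,|A_i^c|>2$, so it suffices to show that the property ``$\alpha$ is an isomorphism on ${\bf A}^1$'' is invariant under a single such flip $L\rightsquigarrow L'=\flip_A(L)$.

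On the combinatorial side, a flip $\flip_A$ with $|A|,|A^c|>2$ never alters any $2$-element subset of $[n]$, so $L$ and $L'$ share the same collection of $2$-element faces. Since ${\bf A}^1_{\bullet,comb}$ is presented by the generators $(ij)$ (pairs that are faces) together with the four-term relations (which involve only pairs), we obtain a tautological identification ${\bf A}^1_{L,comb}={\bf A}^1_{L',comb}$ sending $(ij)$ to $(ij)$. On the algebraic side, set $K:=L\setminus A=L'\setminus A^c$. By Lemma \ref{blowupVSflip}, $\MMM_{0,K}$ is a blow up of $\MMM_{0,L}$ along $(A)_L$ and simultaneously a blow up of $\MMM_{0,L'}$ along $(A^c)_{L'}$, with a shared exceptional divisor $\D$. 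Since $|A|,|A^c|>2$, both centers have codimension $\geq 2$, so the blow-up formula in degree one gives
\[
{\bf A}^1_{K,alg}=\pi_L^*{\bf A}^1_{L,alg}\oplus \mathbb{Z}[\D]=\pi_{L'}^*{\bf A}^1_{L',alg}\oplus \mathbb{Z}[\D],
\]
and hence a canonical group isomorphism $\phi\colon {\bf A}^1_{L,alg}\to {\bf A}^1_{L',alg}$ characterized by $\pi_L^*(x)\equiv \pi_{L'}^*(\phi(x))\pmod{\mathbb{Z}[\D]}$.

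The remaining step is to check that $\phi((ij)_L)=(ij)_{L'}$ for every pair $\{i,j\}$. This reduces to a direct pullback computation: when $\{i,j\}\subset A$, the smooth center $(A)_L$ lies in the smooth divisor $(ij)_L$ with multiplicity one, giving $\pi_L^*(ij)_L=(ij)_K+[\D]$, whereas $\pi_{L'}^*(ij)_{L'}=(ij)_K$ since $\{i,j\}\not\subset A^c$; the case $\{i,j\}\subset A^c$ is symmetric; and if $\{i,j\}$ is split across $A$ and $A^c$, both pullbacks equal $(ij)_K$. In every case the two pullbacks agree modulo $[\D]$, so $\phi((ij)_L)=(ij)_{L'}$, and the resulting commutative square with $\alpha_L,\alpha_{L'}$ on the vertical edges reduces the claim to the threshold case. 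I expect the main technical obstacle to be justifying multiplicity one in the pullback formula, which rests on the smooth embedding of the perfect cycle $(ij)_L$ in $\MMM_{0,L}$ and its containment of the smooth center $(A)_L$ at the generic point.
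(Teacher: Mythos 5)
Your proposal is correct and follows essentially the same route as the paper: induction along flips from the threshold case (Theorem \ref{ChowForPolygons}), the observation that flips with $|A|,|A^c|>2$ leave the two-element faces, hence ${\bf A}^{1}_{comb}$, unchanged, and the degree-one part of the blow-up exact sequences (Theorem \ref{SESBlow}, e) applied to the two blow-downs of $\MMM_{0,K}$ to identify ${\bf A}^{1}_{alg}$ across the flip. Your explicit verification that this identification matches the perfect-cycle generators $(ij)$ (via the pullback computation, with multiplicity one exactly when $\{i,j\}\subset A$) is left implicit in the paper's proof and essentially coincides with its later Lemma \ref{PullingUp}, so it is a useful added detail rather than a different argument.
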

The \textit{proof} analyses how do these groups change under flips.

We know that the claim is true for threshold complexes. Due to Lemma \ref{LemmaNotLonger2} we  may consider flips only  with $n-2>|A|>2$.
Again, we suppose that the claim is true for the complex $K+B$ and will prove for the complex $K+A$ with $A \sqcup B = [n]$.
Under such flips ${\bf A}^{1}_{comb}$  does not change.  The group ${\bf A}^{1}$  does not change neither. This becomes clear with the following two short exact sequences (see Theorem \ref{SESBlow},e):
\begin{align*}
0 &\rightarrow {\bf A}_{n-4}\big(\MMM_{0, \mathcal{P}_{|A|+1}}\big) \rightarrow {\bf A}_{n-4}\big( \MMM_{0, \mathcal{P}_{|A|+1} } \times \MMM_{0, \mathcal{P}_{|B|+1} } \big) \oplus {\bf A}_{n-4}\big( \MMM_{0, K + B} \big) \rightarrow {\bf A}_{n-4}\big(\MMM_{0, K}\big) \rightarrow 0,\\
0 &\rightarrow {\bf A}_{n-4}\big(\MMM_{0, \mathcal{P}_{|B|+1}}\big) \rightarrow {\bf A}_{n-4}\big( \MMM_{0, \mathcal{P}_{|A|+1} } \times \MMM_{0, \mathcal{P}_{|B|+1} } \big) \oplus {\bf A}_{n-4}\big( \MMM_{0, K + A} \big) \rightarrow {\bf A}_{n-4}\big(\MMM_{0, K}\big) \rightarrow 0. 
\end{align*}
\qed  
\medskip

   Now we know that  $\alpha:\;{\bf A}^{*}_{K+A, comb} \rightarrow {\bf A}^{*}_{K+A, alg}$ is surjective.

\begin{proposition}\label{FirstUniv}
Let $\Gamma$ be a graph $Vert(\Gamma)=[n]$ which equals a tree with one extra edge. Assume that the unique cycle in $\Gamma$ has the odd length. Then  the set of perfect cycles $\{(ij)\}$ corresponding to the edges  of $\Gamma$ is a basis of the (free abelian) group ${\bf A}^{1}_{univ}$.
\end{proposition}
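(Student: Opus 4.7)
The approach is to identify ${\bf A}^{1}_{univ}$ with a concrete sublattice of $\mathbb{Z}^n$ and reduce the statement to an elementary lattice calculation.

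First, I would define the homomorphism $\pi:{\bf A}^{1}_{univ}\to\mathbb{Z}^n$ by $(ij)\mapsto e_i+e_j$; the four-term relations become the trivial identities $(e_i+e_j)+(e_k+e_l)=(e_i+e_k)+(e_j+e_l)$, so $\pi$ is well defined. The main technical step is to verify that $\pi$ is an isomorphism onto the even-sum sublattice $L:=\{v\in\mathbb{Z}^n:\sum_i v_i\in 2\mathbb{Z}\}$. Iterated use of the four-term relations produces the identity $(ij)=(1i)+(1j)-c$ for all $i,j\geq 2$, where $c:=(12)+(13)-(23)$; hence the $n$ elements $(12),(13),\ldots,(1n),c$ generate ${\bf A}^{1}_{univ}$. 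Their images $e_1+e_2,\ldots,e_1+e_n,2e_1$ form a $\mathbb{Z}$-basis of $L$ (direct unimodularity check), so $\pi$ is an isomorphism and ${\bf A}^{1}_{univ}$ is free abelian of rank $n$.

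With this identification, the problem reduces to showing that $\{e_i+e_j:ij\in E(\Gamma)\}$ is a $\mathbb{Z}$-basis of $L$. Write $\Gamma=T\cup\{e_0\}$ with $T$ a spanning tree and $e_0=uv$ the extra edge, and let $V(T)=V_0\sqcup V_1$ be the (unique) bipartition of $T$. Introduce the rank-$(n-1)$ sublattice $L':=\{v\in L:\sum_{i\in V_0}v_i=\sum_{i\in V_1}v_i\}$. I would show that the $n-1$ tree edges form a $\mathbb{Z}$-basis of $L'$: both linear independence and surjectivity onto $L'$ follow by a standard leaf-pruning induction, since any leaf $\ell$ of $T$ lies on a unique tree edge $\ell\ell'$, so the coefficient of $e_\ell$ in any $\mathbb{Z}$-combination reads off that edge's coefficient and subtracting the appropriate multiple reduces to $T\setminus\{\ell\}$.

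Finally, the odd-cycle hypothesis enters: since the fundamental cycle of $e_0$ has odd length, the tree path from $u$ to $v$ has even length, forcing $u$ and $v$ into the same bipartition class, say $u,v\in V_0$. The quotient $L/L'$ is identified via $v\mapsto\sum_{i\in V_0}v_i-\sum_{i\in V_1}v_i$ with the infinite cyclic group $2\mathbb{Z}\cong\mathbb{Z}$ (the value is automatically even on $L$ by parity), and under this identification $\pi(e_0)=e_u+e_v$ maps to $2$, hence is a generator. Therefore the tree edges together with $e_0$ generate $L$, and $n$ generators of a rank-$n$ free abelian group automatically form a $\mathbb{Z}$-basis. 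The main obstacle is the identification ${\bf A}^{1}_{univ}\cong L$ in the first step, which simultaneously yields both freeness and the rank; the odd-cycle hypothesis is essential at the end, since if the cycle were of even length then $u,v$ would lie in opposite bipartition classes, $\pi(e_0)$ would already belong to $L'$, and the $n$ vectors would span only the proper sublattice $L'\subsetneq L$.
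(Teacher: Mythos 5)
Your proof is correct, but it follows a genuinely different route from the paper. The paper stays entirely inside the relation calculus: it observes that the four-term relation is an alternating relation around a $4$-cycle, deduces alternating relations around arbitrary even closed cycles (possibly with repeated vertices), concludes that the edge set of any graph containing an even cycle is dependent in ${\bf A}^{1}_{univ}$, and then invokes the fact that $\Gamma$ is a maximal graph without even cycles; the independence and spanning details are left largely implicit. You instead build an explicit lattice model: the homomorphism $(ij)\mapsto e_i+e_j$ into $\mathbb{Z}^n$, shown to be an isomorphism of ${\bf A}^{1}_{univ}$ onto the even-coordinate-sum lattice $L$ via the $n$ generators $(12),\dots,(1n)$ and $c=(12)+(13)-(23)$, after which the statement becomes the classical fact that the incidence vectors of a spanning tree form a basis of the balanced sublattice $L'$ (leaf-pruning), while the extra edge of an odd cycle joins two vertices in the same bipartition class and hence generates $L/L'\cong 2\mathbb{Z}$. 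Your approach buys more: it proves, rather than assumes, that ${\bf A}^{1}_{univ}$ is free of rank $n$, it makes the independence and spanning arguments fully explicit, and it isolates exactly where the odd-cycle hypothesis is used (an even cycle would keep the extra vector inside the proper sublattice $L'$). The paper's sketch is shorter and shows more directly how relations in ${\bf A}^{1}_{univ}$ correspond to even cycles, but as written it only establishes maximality-type dependence statements and leaves the basis property to the reader; your argument closes precisely those gaps, at the cost of the preliminary identification ${\bf A}^{1}_{univ}\cong L$, whose verifications (the identity $(ij)=(1i)+(1j)-c$ and the unimodularity check) are routine and correctly indicated.
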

\begin{proof}
Any element of the group ${\bf A}^{1}_{univ}$ by definition has a form $\sum_{ij} a_{ij} \cdot (ij)$ with the sum ranges over all edges of the complete graph on the set $[n]$. The four-term relation can be viewed as an alternating relation for a four-edge cycle. One concludes that analogous alternating relation holds for each cycle of even length. Example: $(ij)-(jk)+(kl)-(lm)+(mp)-(pi)=0$. Such a cycle may have repeating vertices. Therefore, if a graph has an even cycle, the perfect cycles associated to its edges are dependant. 

It remains to observe that  the graph $\Gamma$ is a maximal graph without even cycles.
\end{proof}

By Theorem \ref{NewChowRing}, the Chow rings of the compactifications corresponding to complexes $K$, $K+A$, and $K+B$ are related in the following way:

$${\bf A}^{*}_{K} \cong \frac{ {\bf A}^{*}_{K+A, alg}[T]}{\big( f_{A}(T), T\cdot \ker (i_{A}^{*})\big)} \cong \frac{ {\bf A}^{*}_{K+B}[S]}{\big( f_{B}(S), S\cdot \ker (i_{B}^{*})\big)}.$$

Now we need an explicit description of the polynomials $f_{A}$ and $f_{B}$. 

Assuming that $A=\{x, x_{2}, \dots, x_{a}\}$ $B=\{y, y_{2}, \dots, y_{b}\}$, where $|A|= a$ and  $|B| = b$, take the generators
\begin{align*}
&\big\{(xy); (xy_{i}), i \in \{2, \dots, b\}; (x_{j}y), j \in \{2, \dots, a\}; (yy_{2}) \big\} \text{ for } {\bf A}^{*}_{K+ B} \text{ , and }\\
&\big\{(xy); (xy_{i}), i \in \{2, \dots, b\}; (x_{j}y), j \in \{2, \dots, a\}; (xx_{2}) \big\} \text{ for } {\bf A}^{*}_{K+ A, comb}.
\end{align*}


\begin{figure}[t] 
\includegraphics[width=8cm]{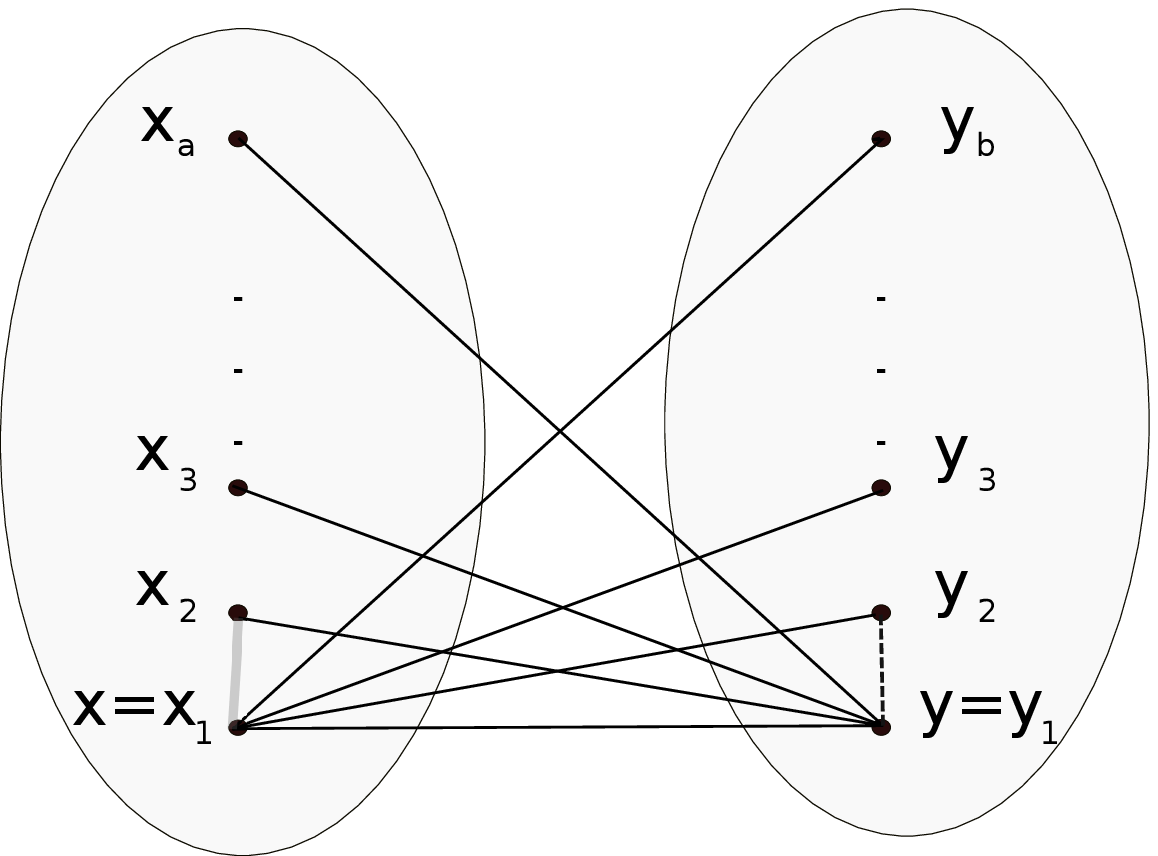}
\caption{111}
\label{Figbases}
\centering
\end{figure}

Denote by $\AA$ the subring of the Chow rings ${\bf A}^{*}_{K+ A, comb}$ and ${\bf A}^{*}_{K+ B}$ generated by the elements $\{ (xy); (xy_{i}), i \in \{2, \dots, b-1\}; (x_{j}y), j \in \{2, \dots, a-1\} \}$.

Then ${\bf A}^{*}_{K+ A, comb}$ is isomorphic to $\AA[I]/F_{B}(I)$ where $I:=(xx_{2})$ and $F_{B}(I)$ is an incarnation of the expression $(B) = (yy_{2})\cdot\dots\cdot(yy_{b})=0$ via the generators. Analogously, ${\bf A}^{*}_{K+ B, comb} \cong \AA[J]/F_{A}(J)$ with $V:= (yy_{2})$.

{The cycles $(A)$ and $(B)$ equal to the complete intersection of divisors $(xx_{2}), (xx_{3}), \dots, (xx_{a-1})$ and $(yy_{2}), (yy_{3}), \dots, (yy_{b-1})$ respectively. So  the Chern polynomials are:} $$f_{A}(T) = \big(T + (xx_{2})\big)\cdot\dots\cdot\big(T + (xx_{a-1})\big) \text{ and }f_{B}(S) = \big(S + (yy_{2})\big)\cdot\dots\cdot\big(S + (yy_{b-1})\big).$$ 

Moreover, the new variables $T$ and $S$ correspond to one and the same exceptional divisor  $\D_{A}=\D_{B}$. 

Relation between polynomials $f_{\bullet}$ and $F_{\bullet}$ are clarified in the following lemma.

\begin{lemma}\label{PullingUp}
The Chow class of the image of a divisor $(ab)_{K+A}$, $a,b \in [n]$ under the morphism $\pi_{A}^{*}$ equals
\begin{enumerate}
\item $(ab)_{K}$ for $a \in A, b\in B$, or vice versa;

\item ${\bf bl}_{(ab)(A)}\big((ab)_{K+A}\big)$ for $\{a,b\} \subset B$;

\item ${\bf bl}_{(A)}\big((ab)_{K+A}\big) + \D_{A}$ for $\{a,b\} \subset A$.
\end{enumerate}
\end{lemma}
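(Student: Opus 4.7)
My plan is to apply the standard pullback formula for a blow up with smooth center. Lemma \ref{blowupVSflip} identifies $\pi_A$ as the blow up of $\MMM_{0,K+A}$ along the smooth subvariety $(A)$, with exceptional divisor $\D_A$; consequently for any effective Cartier divisor $D$
$$\pi_A^* D = \widetilde{D} + m_{(A)}(D) \cdot \D_A,$$
where $\widetilde{D}$ is the proper transform and $m_{(A)}(D)$ its multiplicity along $(A)$. I will run through the three cases, in each deciding whether $(ab)_{K+A}$ contains $(A)$, computing the multiplicity, and identifying the proper transform.

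In cases (1) and (2), the divisor $(ab)_{K+A}$ does not contain $(A)$: its intersection with $(A)$ is the perfect cycle $(A)(ab)$, which is a proper subvariety of $(A)$ --- namely $(Ab)$ in case (1), $(A \cup \{a,b\})$ in case (2). The multiplicity therefore vanishes and the pullback reduces to the proper transform. Transversality of $(A)$ and $(ab)_{K+A}$ inside the smooth ambient $\MMM_{0,K+A}$ identifies the proper transform with the blow up of $(ab)_{K+A}$ along its intersection with $(A)$, which in case (2) is precisely $\bl_{(ab)(A)}((ab)_{K+A})$. For case (1) I will further identify this blow up with $(ab)_K$ by comparing two blow up diagrams: restricting $\pi_A$ to $(ab)_K$ realises $(ab)_K$ as the blow up of $(ab)_{K+A}$ along $(Ab)$, exactly matching the moduli-theoretic contraction of the edge $\{a,b\}$.

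In case (3), $\{a,b\} \subset A$, so every configuration with all $A$-points merged automatically satisfies $p_a = p_b$; hence $(A) \subset (ab)_{K+A}$ and the multiplicity is strictly positive. The crux is to prove it equals one. I plan to use smoothness: both $\MMM_{0,K+A}$ and $(ab)_{K+A}$ are smooth varieties (Theorem \ref{ThmSmoothComp}, applied to $K+A$ and to its edge-contraction at $\{a,b\}$), so $(ab)_{K+A}$ is a smooth divisor containing the smooth subvariety $(A)$, and a smooth divisor has multiplicity one along any smooth subvariety it contains. As in case (2), the proper transform is $\bl_{(A)}((ab)_{K+A})$.

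The main obstacle is the multiplicity-one assertion in case (3) together with the identification of proper transforms as blow ups of the divisor along its intersection with $(A)$ in cases (2) and (3). Both ultimately reduce to a local transversality check that can be carried out in the affine charts $U_{i,j,k}$ of Theorem \ref{ThmSmoothComp}, where the conditions $p_a=p_b$ and ``$A$-points merged'' become explicit polynomial equations whose Jacobians are straightforward to analyse.
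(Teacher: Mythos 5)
Your argument is correct in substance but reaches the conclusion by a different mechanism than the paper. The paper works at the level of cycle classes with Fulton's blow-up formula (Theorem \ref{CorCodim}): cases (1) and (2) follow from the corollary $\pi^{*}[V]=[\widetilde{V}]$ by a dimension count (here $\dim\big((ab)\cap(A)\big)\le (n-4)-(|A|-1)$), and case (3) is obtained by evaluating the correction term $j_{A,*}\big(g_{A}^{*}[(ab)\cap(A)]\cdot s(N_{\D_{A}}\MMM_{0,K})\big)_{n-4}$ via functoriality of Chern and Segre classes and $s(U)c(U)=1$, which collapses to $\D_{A}$. You instead use the divisor-level identity $\pi_{A}^{*}D=\widetilde{D}+m_{(A)}(D)\cdot\D_{A}$ and determine the multiplicity case by case; your case (3) rests on the observation that $(ab)_{K+A}$ is a smooth divisor (the smoothly embedded copy of $\MMM_{0,(K+A)_{(ab)}}$, which is smooth by Theorem \ref{ThmSmoothComp} since the contraction of an ASD complex is again ASD), hence has multiplicity exactly one along any subvariety it contains. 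This is a legitimately more elementary route that avoids Segre classes altogether, at the price of needing smoothness of the perfect cycle and the identification of the strict transform with the blow-up along the reduced intersection, which you correctly reduce to a transversality check in the charts $U_{i,j,k}$. Two small corrections to your case analysis: in case (1) the intersection $(ab)\cap(A)$ is the cycle $(A\cup\{b\})$, which is \emph{empty} because $A$ is a facet of $K+A$ (this is exactly how the paper argues), so the restriction of $\pi_{A}$ to $(ab)_{K}$ is an isomorphism onto $(ab)_{K+A}$ rather than a nontrivial blow-down; and in case (2) the intersection is the product cycle $(ab)\cdot(A)$ (two separate collision groups), not $(A\cup\{a,b\})$ --- your subsequent formula $\bl_{(ab)(A)}\big((ab)_{K+A}\big)$ uses the correct center, so this is only a slip of notation, not of substance.
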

\begin{proof}
 In case (1), the cycle $(ab)_{K+A}$ does not intersect $(A)_{K+A}$. It is by definition $\bl_{(ab)\cap(A)}\big((ab)_{K+A}\big)$. Then (1) and (2) follow directly from Theorem \ref{CorCodim},(2) by dimension counts.

The claim (3) also follows from the blow-up formula Theorem \ref{CorCodim}: 
$$\pi_{A}^{*} (ab) = \bl_{(ab)\cap (A)}\big((ab)_{K+A}\big) + j_{A, *}\big( g^{*}_{A} [(ab)\cap(A)] \cdot s(N_{\D_{A}}\MMM_{0, K}) \big)_{n-4},$$
where $N_{\D_{A}}\MMM_{0, K}$ is a normal bundle and $s(\;)$ is a total Segre class. This follows from the equalities by the functoriality of the total Chern and Segre classes and the equality $s(U)\cdot c(U) = 1$. Namely, we have
\begin{align*}
s((ab)\cap(A)) &= [(ab)\cap(A)] \cdot s(N_{(ab)\cap(A)}(ab)_{K+A}) = [(ab)\cap(A)]\cdot s\left(N_{(A)}\MMM_{0, K+A}\right), \\
c\left( \frac{g^{*}_{A}N_{(A)}\MMM_{0,K+A}}{N_{\D_{A}}\MMM_{0,K}}  \right) &\cdot g^{*}_{A} [(ab)\cap(A)]\cdot g^{*}_{A}s\left(N_{(A)}\MMM_{0, K+A}\right) = g^{*}_{A} [(ab)\cap(A)] \cdot s(N_{\D_{A}}\MMM_{0, K}).
\end{align*}

Finally, we note that $g^{*}_{A} [(ab)\cap(A)] = \D_{A}$.
\end{proof}

From Lemma \ref{PullingUp}, we have the equality
$$f_{A}(T) = \pi_{A}^{*}\big( (xx_{2})\cdot\dots\cdot(xx_{a-1}) \big) = \pi_{A}^{*}(A)\text{ and }f_{B}(S) = \pi_{B}^{*}(B).$$

\begin{lemma}\label{kernels}$\;$
\begin{enumerate}
\item  The ideal $\Ann(A)_{comb}$ is generated by its first graded component. 

\item  More precisely,  the generators of the ideal $\Ann(A)_{comb}$ are
\begin{enumerate}
\item the elements of type $(ab)$ with $a\in A, b\in B$, and

\item the elements of type $$(a_1a_2)-(b_1b_2),$$
where $a_1,a_2 \in A$; $b_1,b_2 \in B=A^c$.
\end{enumerate}

\item The annihilators $\Ann(A)_{comb}$ and $\Ann(B)$ are canonically isomorphic.
\end{enumerate}
\end{lemma}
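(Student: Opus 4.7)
My plan is to first verify that the elements listed in (2) lie in $\Ann(A)_{comb}$, then show they exhaust the degree-one part of the annihilator, bootstrap to higher degrees to obtain (1), and finally deduce (3) by symmetry.

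For the type (a) generators, the multiplication rule yields $(ab)\cdot(A)=(A\cup\{b\})$ for $a\in A$, $b\in B$, which vanishes because $A$ is a facet of $K+A$ so that $A\cup\{b\}\notin K+A$. For the type (b) generators, I would apply the four-term relation
\[
(a_1 a_2) + (b_1 b_2) = (a_1 b_1) + (a_2 b_2)
\]
and multiply by $(A)$; the right-hand side vanishes by the type (a) computation, producing a relation of the form $(a_1 a_2)\cdot(A) = \pm(b_1 b_2)\cdot(A)$ and hence the claimed class in $\Ann(A)_{comb}$. Conversely, to show these elements span $\Ann(A)_{comb}\cap{\bf A}^1$, I would choose a basis of ${\bf A}^1_{K+A,comb}$ analogous to Proposition \ref{FirstUniv} and reduce modulo (a) and (b): any first-degree annihilator thereby reduces to $\lambda\cdot(ij)$ for some fixed $\{i,j\}\subset B$, and the product $(ij)\cdot(A)$ is a nonzero perfect cycle---nonvanishing witnessed by multiplying with two complementary faces via Lemma \ref{LemmaTriple} to obtain $1\in{\bf A}^{n-3}_{K+A,comb}$. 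Hence $\lambda=0$.

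For (1), observe that ${\bf A}^*_{K+A,comb}$ is generated in degree one directly from the multiplication rule, since every perfect cycle $(I)$ factors as $\prod_{j\in I\setminus\{i_0\}}(i_0 j)$ for any fixed $i_0\in I$. Hence any homogeneous $\eta\in\Ann(A)_{comb}$ of degree $d\geq 2$ admits an expression $\sum_i x_i y_i$ with $x_i\in{\bf A}^1$ and $y_i\in{\bf A}^{d-1}$. I would then prove by induction on degree that $\eta$ lies in the ideal generated by $\Ann(A)_{comb}\cap{\bf A}^1$: each $x_i$ can be split using (2) into a part in the degree-one annihilator and a part captured, modulo that ideal, by a single reference divisor $(ij)$ with $\{i,j\}\subset B$; the constraint $\eta\cdot(A)=0$ then forces the residual coefficient itself to belong to $\Ann(A)_{comb}$ in lower degree, enabling the inductive step. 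The main obstacle here is the bookkeeping needed so that the reduction remains combinatorial and does not circularly invoke the isomorphism $\alpha_{K+A}$ being proven.

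For (3), the two-element faces of $K+A$ and $K+B$ coincide (because $|A|,|B|>2$ forces every 2-subset of $A$ or of $B$ to already lie in $K$), yielding a canonical isomorphism ${\bf A}^1_{K+A,comb}\cong{\bf A}^1_{K+B,comb}$. Under this identification the generators (a), (b) are manifestly symmetric in $A$ and $B$, so the first graded components of $\Ann(A)_{comb}$ and $\Ann(B)_{comb}$ correspond; by (1) this extends to a canonical isomorphism of the full ideals.
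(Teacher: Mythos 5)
Your treatment of the easy inclusions and of the degree-one part is essentially the paper's: the mixed divisors $(ab)$, $a\in A$, $b\in B$, kill $(A)$ by the multiplication rule (since $A$ is a facet), the four-term relation handles the second family, and a degree-one annihilator element reduces modulo these to $\lambda\cdot(ij)$ with $i,j\in B$, where $\lambda=0$ because $(A)\cdot(ij)\cdot(B\setminus\{i,j\})$ maps to $1$ under $\alpha$ by Lemma \ref{LemmaTriple} (using this homomorphism only to detect nonvanishing is not circular). One remark: with the convention $(a_1a_2)+(b_1b_2)=(a_1b_1)+(a_2b_2)$, the combination annihilating $(A)$ is the \emph{sum} $(a_1a_2)+(b_1b_2)$, so your ``$\pm$'' resolves to ``$+$''; the minus sign in the printed statement (and in the paper's own example) is a sign slip, and in any case these elements lie in the span of the type (a) ones.

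The genuine gap is in your induction for part (1). After writing a homogeneous $\eta\in\Ann(A)_{comb}$ as $\sum_i x_i y_i$ and reducing the degree-one factors, you have $\eta\equiv(ij)\cdot z$ modulo the ideal generated by $\Ann(A)_{comb}\cap{\bf A}^{1}$, and you assert that $\eta\cdot(A)=0$ forces $z\in\Ann(A)_{comb}$ in lower degree. That inference is unjustified and false in general: $(ij)\cdot z\cdot(A)=0$ does not imply $z\cdot(A)=0$, because multiplication by $(ij)$ is far from injective. For instance, with $i,j\in B$ and $z=(B\setminus\{j\})$ one gets $(ij)\cdot z=(B)=0$ since $B\notin K+A$, hence $(ij)\cdot z\cdot(A)=0$, while $z\cdot(A)\neq 0$ (complete the partition with the vertex $j$ and apply Lemma \ref{LemmaTriple}); similar failures occur whenever a merged index set leaves $K+A$ or the degree exceeds $n-3$. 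Deciding when multiplication by $(ij)\cdot(A)$ kills a class is essentially what the lemma is about, so this step is also close to circular. The paper closes exactly this hole by a finer reduction: modulo the degree-one-generated ideal it first pushes all indices of every monomial $(I_1)\cdots(I_k)$ into $B$, then merges the several brackets into a \emph{single} bracket via the multiplication rule (the two-bracket computation plus induction on the number of brackets), so any homogeneous annihilator element reduces to an integer multiple of one bracket, and that integer must vanish because the bracket times $(A)$ is nonzero by Lemma \ref{LemmaTriple}. Your sketch needs this bracket-merging step (or some equivalent control of the annihilator of $(ij)\cdot(A)$) to make the induction close. Finally, for (3), identifying the first graded components does not by itself ``extend'' to the ideals, which live in different rings; the intended identification goes through the common subring $\AA$ generated by the mixed divisors, in which both annihilators are generated by the same elements.
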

\begin{proof}

First observe that the kernel $\ker(i_{A}^*)$ equal the annihilator of the cycle $(A)$. Set $\kappa$ be the ideal generated by  $\ker (i_{A}^{*})\cap A^1$. Without loss of generality, we may assume that $A=\{1,2,...,m\}$. Observe that:
\begin{enumerate}
\item  If $I\subset [n]$ has a nonempty intersection with both $A$ and $A^c$, then $(A)(I)=0$. In this case, $(I)$ can be expressed as $(ab)(I')$, where $a\in A,\ b \in A^c$.

\item \begin{enumerate}
\item[(i)] If $I\subset A$, then
$(A)\smile (I)= (A)(m+1...m+|I|).$
\item[(ii)] In this case, the element $(I)-(m+1,...,m+|I|)$ is in $\kappa$.
\end{enumerate} 

\medskip

Let us demonstrate this by giving an example with $A=\{1,2,3,4\}$, $(I)=(12)$: \newline
$(1234)\smile (12)=(A)\smile ((15)+
(16)-(56))= 0+0-(1234)(56)$.
We conclude that 
$(12)-(56)\in \kappa$.

Let us show that
$(123)-(567)\in \kappa$.

Indeed, $(123)-(567)=(12) \smile (23)-(567)\in \kappa \Leftrightarrow (56)\smile(23)-(567) \in \kappa \Leftrightarrow  (56)\smile ((23)-(67))\in \kappa$. 
Since $(23)-(67)\in \kappa$, the claim is proven.

\item If  $I\subset A^c$, then
$(A)\smile (I)= (A)(m+1,...,m+|I|).$
The element $(I)-(m+1,...,m+|I|)$ is in $\kappa$.

This follows from (1) and (2).
\end{enumerate}

Now let us prove the lemma.
  Assume $x\smile (A)=0$. Let  $x=\sum_i a_i (I_1^i)...(I^i_{k_i}).$ 

We may assume that $x$
is a homogeneous element. Modulo $\kappa$,
each summand  $(I_1)...(I_{k_i})$ can be reduced to some
$ (m+1...m+r_1)(m+r_1+1,...,m+r_2)...(m+r_{k_i}+1...m')$. Modulo $\kappa$,
$ (m+1...m+r_1)(m+r_1+1...m+r_2)...(m+r_{k_i}+1,...,m')$ can be reduced to a one-bracket element $ (m+1...m+r_1m+r_1+1...m+r_2...m+r_{k_i}+1,...,m'')$.

Indeed, for two brackets we have: $$ (m+1...m+r_1)(m+r_1+1,...,m+r_2) \equiv (m+1...m+r_1)(m+r_1,...,m+r_2-1) \equiv (m+1...m+r_2-1)\;\;(\mathrm{mod} \; \kappa).$$ For a bigger number of brackets, the statement follows by induction.

We conclude that a homogeneous $x\in \ker(i_{A}^*)$ modulo $\kappa$ reduces to some $a(m+1...m+m')$, where $a \in \mathbb{Z}$.  Then $a=0$. Indeed,
$(A)(m+1...m+m')\neq 0$ since  by Lemma \ref{LemmaTriple} $(A)(m+1...m+m')(m+m'...n)\neq 0$.
\end{proof}
\begin{remark}
Via  the four-term relation any element from  b) can be expressed as a linear combination of elements from a). So only a)--elements are sufficient to generate the annihilators. Actually, $$\AA \cong \mathbb{C} \oplus \Ann(A)_{comb}.$$ 
\end{remark}


We arrive at 
the following commutative diagram of graded rings:
\begin{center}
\begin{tikzcd}[column sep=small]
& {\bf A}^{*}_{K}\cong \widetilde{{\bf A}^{*}_{K}}/\Ann(B) \cong \widetilde{{\bf A}^{*}_{K}}/\Ann(A)_{comb} & \\
& \widetilde{{\bf A}^{*}_{K}}:= {\bf A}^{*}_{K+A, comb} [T]/f_{(A)}(T) \cong {\bf A}^{*}_{K+B} [S]/f_{(B)}(S) \arrow[u] & \\
& &\\
 {\bf A}^{*}_{K+B} \cong \AA[J]/F_{A}(J)\arrow[uur, "f_{B} = \pi_{B}^{*} F_{B}"] &  & {\bf A}^{*}_{K+A, comb}\cong \AA[I]/F_{B}(I) \arrow[uul, "f_{A} = \pi_{A}^{*} F_{A}"] \\
& \AA \arrow[ul, "F_{A}"] \arrow[ur, "F_{B}"] &
\end{tikzcd}
\end{center}

Therefore, the following diagram commutes:

\begin{center}
\begin{tikzcd}[column sep=small]
0\arrow[r] & \Ann(A)_{comb} \arrow[r] \arrow[d]& {\bf A}^{*}_{K+A, comb} [f_{(A)}] \arrow[r]\arrow[d, two heads, "\alpha"] & \sfrac{{\bf A}^{*}_{K+A, comb}[f_{(A)}]}{\Ann(A)_{comb}} \arrow[r] \arrow[d, "\cong"] & 0\\
0 \arrow[r]& \Ann(A)_{alg} \arrow[r] & {\bf A}^{*}_{K+A, alg} [f_{(A)}]\arrow[r] & \sfrac{{\bf A}^{*}_{K+A, alg} [f_{(A)}]}{\Ann(A)_{alg}} \arrow[r] & 0
\end{tikzcd}
\end{center}
Here $R[g]$  denotes the
 extension of a ring $R$ by a polynomial $g(t)$. All three vertical maps are induced by the map $\alpha$; the last vertical map is an isomorphism since both rings are isomorphic to ${\bf A}^{*}_{K}$.

The ideals $\Ann(A)_{comb}$ and $\Ann(A)_{alg}$ coincide,
so  the homomorphism $$\alpha:\;{\bf A}^{*}_{K+A, comb} [f_{(A)}] \rightarrow {\bf A}^{*}_{K+A, alg} [f_{(A)}]$$ is {injective}, and the theorem is proven. \qed

\section{Poincaré polynomials of ASD compactifications}

\begin{theorem}\label{PoiASD}
Poincaré polynomial $\PP\big(\MMM_{0, L}\big)$ for an ASD complex $L$ equals
$$\PP\big(\MMM_{0, L}\big) = \frac{1}{q(q-1)} \left( (1+q)^{n-1} - \sum\limits_{I \in L} q^{|I|} \right).$$
\end{theorem}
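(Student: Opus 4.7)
The plan is to prove the formula by induction along flips, starting from threshold ASD complexes and using the blow-up structure provided by Lemma~\ref{blowupVSflip} to handle each individual flip.

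For the base case, I would invoke the classical computation of the Poincaré polynomial of a polygon space. When $L=SHORT(\mathcal L)$ is a threshold ASD complex, the space $\MMM_{0,L}$ is the polygon space $M_{\mathcal L}$, whose cohomology ring was computed by Klyachko and Hausmann--Knutson (see also \cite{NP}); the resulting Poincaré polynomial is precisely the stated expression with the sum taken over short sets, i.e.\ over the faces of $L$. The simple case $L=\mathcal{P}_n$, for which $\MMM_{0,L}\cong \mathbb{P}^{n-3}$, offers a quick sanity check: a direct sum gives $\sum_{I\in\mathcal{P}_n}q^{|I|}=q+(1+q)^{n-1}-q^{n-1}$, so the right-hand side collapses to $(q^{n-2}-1)/(q-1)$, matching $\PP(\mathbb{P}^{n-3})$.

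For the inductive step, let $K+B$ and $K+A=\flip_B(K+B)$ be ASD complexes related by a single flip, with $A=B^c$ and $|A|,|B|>2$. By Lemma~\ref{blowupVSflip}, $\MMM_{0,K}$ is simultaneously the blow-up of $\MMM_{0,K+B}$ along the smooth center $(B)\cong\mathbb{P}^{|A|-2}$ of codimension $|B|-1$, and the blow-up of $\MMM_{0,K+A}$ along $(A)\cong\mathbb{P}^{|B|-2}$ of codimension $|A|-1$. Applying the standard blow-up identity
$$\PP(\mathrm{Bl}_Y X)=\PP(X)+\PP(Y)\bigl(q+q^2+\cdots+q^{c-1}\bigr)$$
to both presentations of $\MMM_{0,K}$, subtracting, and using $\PP(\mathbb{P}^{k})=(q^{k+1}-1)/(q-1)$, a short simplification yields
$$\PP(\MMM_{0,K+A})-\PP(\MMM_{0,K+B})=\frac{q^{|B|}-q^{|A|}}{q(q-1)}.$$
On the combinatorial side, the face sets of $K+A$ and $K+B$ differ precisely in the single elements $A$ and $B$, so the right-hand side of the claimed formula changes between them by exactly the same quantity. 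Proposition~\ref{LemmaNotLonger2}(3) then guarantees that every ASD complex can be connected to some threshold ASD complex by a sequence of flips with $|A_i|,|A_i^c|>2$, and the inductive step propagates the identity along this sequence.

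The principal technical obstacle is the algebraic identity underlying the inductive step: verifying that $\PP(\mathbb{P}^{|A|-2})(q+\cdots+q^{|B|-2}) - \PP(\mathbb{P}^{|B|-2})(q+\cdots+q^{|A|-2})$ collapses exactly to $(q^{|B|}-q^{|A|})/(q(q-1))$. Expanding both products and cancelling the common $q^{|A|+|B|-2}$ term reduces the numerator to $(q-1)(q^{|B|-1}-q^{|A|-1})$, which cancels a factor of $q-1$ and produces the desired expression. Once this identity is in place, the flip induction is entirely formal, and the natural $A\leftrightarrow B$ symmetry of the difference reflects the reversibility of a single flip.
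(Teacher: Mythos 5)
Your proposal is correct and follows essentially the same route as the paper: the base case is Klyachko's formula for polygon spaces (threshold complexes), and the inductive step compares the two blow-down presentations of $\MMM_{0,K}$ from Lemma \ref{blowupVSflip} along a flip with $|A|,|A^c|>2$, which is exactly what the paper does via the split exact sequence of Chow groups (Theorem \ref{SESBlow}); your ``standard blow-up identity'' is equivalent to that sequence since the exceptional divisor is $\mathbb{P}^{|A|-2}\times\mathbb{P}^{|B|-2}$. Your computed difference $\frac{q^{|B|}-q^{|A|}}{q(q-1)}$ agrees with the paper's relation $\PP(K+B)=\PP(K+A)+\PP(\mathcal{P}_{|A|+1})-\PP(\mathcal{P}_{|B|+1})$, so the argument is sound.
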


\begin{proof}
This theorem is proven by  Klyachko \cite[Theorem 2.2.4]{Kl} for polygon spaces, that is, for compactifications coming from a threshold ASD complex.
Assume that $K + A$ be a threshold  ASD complex. 
For the blow up of the space $\MMM_{0, K+B}$ along the subvariety $(B)$ we have an exact sequence of Chow  groups
$$0 \rightarrow {\bf A}_{p}\big(\MMM_{0, \mathcal{P}_{|A|+1}}\big) \rightarrow {\bf A}_{p}\big( \MMM_{0, \mathcal{P}_{|A|+1} } \times \MMM_{0, \mathcal{P}_{|B|+1} } \big) \oplus {\bf A}_{p}\big( \MMM_{0, K + B} \big) \rightarrow {\bf A}_{p}\big(\MMM_{0, K}\big) \rightarrow 0,$$
where, as before, $A \sqcup B = [n]$ and $p$ is a natural number. We get the equality
$$\PP(K) = \PP(K+B) + \PP(\mathcal{P}_{|A|+1})\cdot\PP(\mathcal{P}_{|B|+1})- \PP(\mathcal{P}_{|A|+1}).$$
 Also,  we have the recurrent relations for the  Poincaré polynomials:
\begin{align*}
\PP(K + B) &= \PP(K + A) + \PP(\mathcal{P}_{|A|+1}) - \PP(\mathcal{P}_{|B|+1}) =\\
&=\frac{1}{q(q-1)}\left((1+q)^{n-1}  - \sum\limits_{I\in K + A} q^{|I|} + q^{|A|} - q - q^{|B|} +q \right) = \\
&=\frac{1}{q(q-1)}\left((1+q)^{n-1}  - \sum\limits_{I\in K} q^{|I|} -q^{|A|} + q^{|A|} - q - q^{|B|} +q \right) = \\
&= \frac{1}{q(q-1)} \left( (1+q)^{n-1} - \sum\limits_{I \in K + B} q^{|I|} \right).
\end{align*}
We have used the following: if $U$ is a facet of an ASD complex, then there is an isomorphism $\MMM_{0, \mathcal{P}_{|U|+1}} \cong \mathbb{P}^{|U| - 2}$ (see Example \ref{example}(2) );  the Poincaré polynomial of the projective space $\mathbb{P}^{|U| - 2}$ equals $\frac{q^{|U|} - q}{q(q-1)}$.
\end{proof}

\section{The tautological line bundles over $\MMM_{0,K}$ and the  $\psi$--classes}\label{psi}

  The \textit{tautological line bundles} $L_i, \ i=1,...,n$  were introduced by M. Kontsevich \cite{Kon}
for  the Deligne-Mumford compactification. 
The  first Chern classes of $L_i$ are  called  the $\psi$-\textit{classes}.

We now mimic  the Kontsevich's original definition for ASD compactifications.
Let us  fix an ASD complex $K$ and the corresponding compactification $\MMM_{0,K}$.
\begin{definition} The line bundle $E_i=E_{i}(L)$ is the complex line bundle over the space $\MMM_{0,K}$  whose fiber over a point $(u_1,...,u_n)\in (\mathbb{P}^1)^n$ is the  tangent line\footnote{In the original Kontsevich's definition, the fiber over a point is the cotangent line, whereas we have the tangent line. This replacement does not create much difference.} to the projective line $\mathbb{P}^1$ at the point $u_i$.  The  first Chern class of $E_{i}$   is called the $\psi$-class and is denoted by $\psi_i$.
\end{definition}

\begin{proposition} \label{Ch2} 
\begin{enumerate}
\item For any $i \neq j \neq k\in [n]$ we have
$$\psi_i= (ij) +(ik)-(jk).$$
\item The four-term relation holds true:

$(ij)+(kl)=(ik)+(jl)$ for any distinct $i,j,k,l \in [n].$
\end{enumerate}

\end{proposition}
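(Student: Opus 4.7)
The plan is to compute $\psi_i$ in (1) by constructing an explicit meromorphic section of $E_i$ from the cross-ratio and reading off its divisor; item (2) will then follow by comparing two expressions for $\psi_i$ built from different triples.

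For (1), I will work with the following natural section. Given three distinct indices $i,j,k$, at each configuration $(u_1,\dots,u_n)\in STABLE(K)$ with $u_i,u_j,u_k$ pairwise distinct, let $\phi\in\mathrm{PSL}_{2}(\mathbb{C})$ be the unique Möbius transformation sending $(u_i,u_j,u_k)$ to $(0,1,\infty)$, and set
$$\sigma_{ijk}:=(d\phi)^{-1}\bigl(\partial/\partial z|_{0}\bigr)\in T_{u_i}\mathbb{P}^1.$$
The assignment is $\mathrm{PSL}_{2}$-equivariant, hence descends to a rational section of $E_i$ on $\MMM_{0,K}$. I will compute its divisor in a chart $U_{p,q,r}$ (with $p,q,r$ distinct from $i$), trivializing $E_i$ by $\partial/\partial w|_{u_i}$ where $w$ is the fixed $\mathbb{P}^1$-coordinate of the chart. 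A direct cross-ratio computation gives $\phi'(u_i)=(u_j-u_k)/\bigl[(u_j-u_i)(u_i-u_k)\bigr]$, hence
$$\sigma_{ijk}=\frac{(u_j-u_i)(u_i-u_k)}{u_j-u_k}\,\partial/\partial w|_{u_i}.$$
The scalar factor has simple zeros along $(ij)$ and $(ik)$ and a simple pole along $(jk)$, so $\mathrm{div}(\sigma_{ijk})=(ij)+(ik)-(jk)$, and
$$\psi_i=c_1(E_i)=(ij)+(ik)-(jk).$$
Since the charts $U_{p,q,r}$ cover $\MMM_{0,K}$, the identity holds globally.

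For (2), I will exploit that (1) gives the same class $\psi_i$ for different choices of witnesses. Using triples $(i,j,k)$ and $(i,k,l)$,
$$(ij)+(ik)-(jk)=(ik)+(il)-(kl)\;\Longrightarrow\;(ij)+(kl)=(il)+(jk).$$
Using triples $(i,j,k)$ and $(i,j,l)$ analogously yields $(ik)+(jl)=(il)+(jk)$. Combining the two gives $(ij)+(kl)=(ik)+(jl)$, which is the desired four-term relation (a relabelling of the one in Proposition \ref{ComputRules}(3)).

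The one point to watch is the behaviour on the boundary when some pair $\{a,b\}$ is not a face of $K$: then the symbol $(ab)$ equals zero and the locus $\{u_a=u_b\}$ is absent from $\MMM_{0,K}$. This is harmless — if $\{a,b\}\notin K$ then $u_a-u_b$ is nowhere zero on $\MMM_{0,K}$, so the corresponding factor of $\sigma_{ijk}$ contributes no divisor, which is consistent with the convention $(ab)=0$ on non-faces. No other subtleties arise, which matches the paper's remark that this computation is essentially the Chern-number count for $T\mathbb{S}^2$.
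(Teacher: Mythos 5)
Your argument is correct, and its architecture matches the paper's: associate to the triple $(i,j,k)$ a section of $E_i$ that is defined and nonvanishing wherever $u_i,u_j,u_k$ are pairwise distinct, conclude that $\psi_i$ is supported on the divisors $(ij)$, $(ik)$, $(jk)$, and obtain (2) by comparing the expressions coming from different triples. Where you genuinely go further is in fixing the coefficients: the paper uses the smooth ``circle through $x_i,x_j,x_k$'' section and leaves the values $A=B=1$, $C=-1$ to a ``detailed analysis'', while you take the meromorphic section $\sigma_{ijk}=(d\phi)^{-1}(\partial/\partial z|_{0})$ and read the multiplicities off the explicit factor $(u_j-u_i)(u_i-u_k)/(u_j-u_k)$; the cross-ratio derivative is computed correctly, so your version actually supplies the step the paper only sketches. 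One small caveat: $\partial/\partial w|_{u_i}$ is not a frame of $E_i$ on all of $U_{p,q,r}$ --- it degenerates along the locus $u_i=u_r$ (i.e.\ $u_i=\infty$ in the chart) when $\{i,r\}\in K$ --- so your displayed divisor computation should be read at the generic points of $(ij)$, $(ik)$, $(jk)$, or one checks that the double pole of the scalar factor along $u_i=\infty$ cancels the double zero of $\partial/\partial w$ there; since $\sigma_{ijk}$ is globally defined and nonvanishing away from the three collision loci, the conclusion $\psi_i=(ij)+(ik)-(jk)$ is unaffected. Your convention $(ab)=0$ for non-faces and your derivation of the four-term relation from two choices of triples coincide with the paper's treatment.
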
 
\begin{proof}
(1) Take a stable configuration $(x_1,...,x_n)\in \MMM_{0,K}$. Take the circle passing through $x_i,x_j$, and $x_k$. It is oriented  by the order $ijk$. Take the  vector lying in the tangent complex line to $x_i$  which is tangent to the circle and points in the direction of $x_j$. It gives rise to a  section of $E_{i}$  which is defined correctly whenever the points $x_i,x_j$, and $x_k$ are distinct. Therefore, $\psi_i = A(ij) +B(ik)+C(jk)$ for some integer $A,B,C$. Detailed analysis specifies their values.

Now (2) follows since  the Chern class $\psi_i $ does not depend on the choice of $j$ and $k$.
\end{proof}

Let us denote by $|d_{1}, \dots, d_{n}|_K$ the intersection number $\langle \psi_1^{ d_1} ... \psi_k^{ d_k}\rangle_K=  \psi_1^{\smile d_1} \smile ... \smile \psi_k^{\smile d_k}$  related to the ASD complex $K$.

\begin{theorem}\label{ThmRecursion} Let   $\MMM_{0,K}$  be an ASD compactification.
A recursion for the intersection numbers is
\begin{align*}
|d_{1}, \dots, d_{n}|_{K} = &|d_{1}, \dots, d_{i}+d_{j}-1, \dots, \hat{d_{j}}, \dots, d_{n}|_{K_{(ij)}} + |d_{1}, \dots, d_{i}+d_{k}-1, \dots, \hat{d_{k}}, \dots, d_{n}|_{K_{(ik)}} \\
&-|d_{1}, \dots, d_{i}-1 ,\dots, d_{j}+d_{k}, \dots, \hat{d_{j}}, \hat{d_{k}},\dots, d_{n}|_{K_{(jk)}},
\end{align*}
where $i, j, k \in [n]$ are distinct.

Remind that $K_{(ij)}$ denotes the complex $K$ with $i$ and $j$ frozen together.  
 {Might happen that $K_{(ij)}$  is ill-defined, that is, $(ij)\notin K$. Then we set the corresponding summand  to be zero.}
\end{theorem}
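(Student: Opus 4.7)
The plan is to peel off one factor of $\psi_i$ and rewrite it via Proposition~\ref{Ch2}(1) as $(ij)+(ik)-(jk)$, which splits the intersection number into three pieces supported on the perfect cycles $(ij)$, $(ik)$, and $(jk)$:
$$|d_1,\dots,d_n|_K \;=\; \int_{\MMM_{0,K}} \psi_i^{d_i-1}\Bigl(\prod_{m\neq i}\psi_m^{d_m}\Bigr)\bigl((ij)+(ik)-(jk)\bigr).$$
Each summand will then be identified with an intersection number on a contracted ASD compactification.

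The crucial technical input is a restriction formula for the tautological line bundles along the embedding $\iota_{(ab)}\colon \MMM_{0,K_{(ab)}}\hookrightarrow\MMM_{0,K}$ that realizes the contracted moduli space as the perfect cycle $(ab)$. Namely,
$$\iota_{(ab)}^*E_a = \iota_{(ab)}^*E_b = E_{(ab)},\qquad \iota_{(ab)}^*E_m = E_m \text{ for } m\notin\{a,b\},$$
so that $\iota_{(ab)}^*\psi_a=\iota_{(ab)}^*\psi_b=\psi_{(ab)}$ and $\iota_{(ab)}^*\psi_m=\psi_m$ otherwise. This will be immediate from the fiberwise definition of the tautological bundles: a point of the perfect cycle $(ab)$ is represented by a stable configuration with $p_a=p_b$ on $\mathbb{P}^1$, so the tangent lines at $p_a$ and $p_b$ are literally the same line, which is precisely the fiber of $E_{(ab)}$ over the corresponding point of $\MMM_{0,K_{(ab)}}$; all other marked points and their tangent lines are untouched by the contraction.

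Combining this restriction formula with the projection formula $\int_{\MMM_{0,K}}\alpha\cdot(ab)=\int_{\MMM_{0,K_{(ab)}}}\iota_{(ab)}^*\alpha$, the $(ij)$-summand evaluates to
$$\int_{\MMM_{0,K_{(ij)}}}\psi_{(ij)}^{d_i+d_j-1}\prod_{m\neq i,j}\psi_m^{d_m} \;=\; |d_1,\dots,d_i+d_j-1,\dots,\hat{d_j},\dots,d_n|_{K_{(ij)}},$$
the $(ik)$-summand gives the analogous expression with the roles of $j$ and $k$ swapped, and the $(jk)$-summand yields $|d_1,\dots,d_i-1,\dots,d_j+d_k,\dots,\hat{d_j},\hat{d_k},\dots,d_n|_{K_{(jk)}}$; here $\psi_i$ keeps exponent $d_i-1$ since $i\notin\{j,k\}$. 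Signs and exponents then match the stated recursion, and the dimension bookkeeping checks out because $\sum d_m = n-3 = \dim_{\mathbb{C}}\MMM_{0,K}$ on the left while $(d_i+d_j-1)+\sum_{m\neq i,j}d_m = n-4 = \dim_{\mathbb{C}}\MMM_{0,K_{(ij)}}$ on the right, and similarly for the other two terms. The degenerate case $\{a,b\}\notin K$ is automatic: the cycle $(ab)$ then vanishes in the Chow ring by Proposition~\ref{ComputRules}(2), consistent with the convention that the corresponding summand be set to zero.

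The main obstacle is a clean justification of the restriction identity $\iota_{(ab)}^*\psi_a = \psi_{(ab)}$, which depends on identifying the perfect cycle $(ab)\subset\MMM_{0,K}$ with the contracted moduli space $\MMM_{0,K_{(ab)}}$ in a way compatible with the tautological bundles. Once this identification is pinned down, everything else is a one-line application of the projection formula followed by routine bookkeeping of exponents.
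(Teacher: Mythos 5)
Your proposal is correct and follows essentially the same route as the paper: peel off one factor of $\psi_i$, rewrite it via Proposition \ref{Ch2}(1) as $(ij)+(ik)-(jk)$, and identify each cup product with a perfect cycle $(ab)$ as the corresponding intersection number on $\MMM_{0,K_{(ab)}}$, with the $(ab)\notin K$ case giving zero. The only difference is that you spell out the restriction of the tautological bundles and the projection formula, which the paper compresses into a one-line observation.
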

\begin{proof}
By Proposition \ref{Ch2},
$$\langle \psi_1^{d_1}  \dots \psi_{n}^{d_n} \rangle_K = \langle \psi_1^{d_1-1} \dots \psi_{n}^{d_n} \rangle_K \smile \big((1i)+(1j)- (ij)\big).$$
It remains to observe that $\langle \psi_1^{d_1-1} \dots \psi_{n}^{d_n} \rangle_K\smile (ab)$ equals the $\langle \psi_1^{d_1-1} \dots \psi_{n}^{d_n} \rangle_{K_{(a,b)}}$.\end{proof}

\begin{theorem}\label{main_theorem} Let   $\MMM_{0,K}$  be an ASD compactification. Any top monomial in $\psi$-classes modulo renumbering has a form  $$\psi_1^{d_1}\smile ...\smile \psi_m^{d_m}$$ with $\sum_{q=1}^m d_q=n-3$  and $d_q \neq 0$ for $q=1,...,m$. Its value equals the signed number of partitions $$[n-2]=I\cup J$$ with $m+1 \in I$ and $I,J \subset K$. Each partition is counted with the sign
	
	$$(-1)^N \cdot \varepsilon ,$$
	
	where  $$N= |J|+\sum_{q \in J , q\leq m} d_q, \;\;\;\;\;\;\;\;\;\;\;\;\;\;\;\;\;\;\;\;\;\varepsilon= 
    \left\{
	\begin{array}{lll}
	1, & \hbox{if \ \  } J\cup \{n\}\in K, \hbox{and\ \  }J\cup \{n-1\}\in K;\\
    -1, & \hbox{if \ \  } I\cup \{n\}\in K, \hbox{and\ \  }I\cup \{n-1\}\in K;\\
	0, & \hbox{otherwise.}
	\end{array}
	\right.$$
	
\end{theorem}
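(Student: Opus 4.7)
My approach is to apply Proposition~\ref{Ch2} with $j = n-1$ and $k = n$, writing $\psi_i = x_i + y_i - z$ where $x_i := (i, n-1)$, $y_i := (i, n)$, $z := (n-1, n)$ for each $i \in [m]$, and then reduce the top intersection to a signed count of triple partitions $[n] = A \sqcup B \sqcup C$ evaluable by Lemma~\ref{LemmaTriple}. The multiplication rule of Proposition~\ref{ComputRules}(1)(b) yields $x_i y_i = x_i z = y_i z = (i, n-1, n)$, so the orthogonalities $x_i(y_i - z) = 0 = y_i(x_i - z)$ hold. These collapse each $\psi_i^{d_i}$ to $x_i^{d_i} + (y_i - z)^{d_i}$, and therefore
$$\prod_{i=1}^{m} \psi_i^{d_i} \;=\; \sum_{T \subseteq [m]} \prod_{i \in T} x_i^{d_i} \cdot \prod_{i \in [m]\setminus T} (y_i - z)^{d_i}.$$

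Next I would simplify each summand using the multiplication rule and the four-term relation. For distinct $i, j$ the factors combine as $x_i x_j = (i, j, n-1)$ and $y_i y_j = (i, j, n)$, so $\prod_{i \in T'} x_i = (T', n-1)$ and analogously $\prod_{j \in Y'} y_j = (Y', n)$; the mixed $y_i^a z^b$ terms in the binomial expansion of $(y_i - z)^{d_i}$ collapse via $y_i z = (i, n-1, n)$. The self-intersections $x_i^{d_i}$ with $d_i \geq 2$ are iteratively reduced using the identity $x_i^2 = x_i \psi_i$ (immediate from $x_i(y_i - z) = 0$), i.e.\ $x_i^{d_i} = x_i \psi_i^{d_i - 1}$, combined with recursive application of the same $\psi_i$-expansion. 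After all these reductions, every summand becomes a signed perfect cycle product associated to a triple partition $[n] = A \sqcup B \sqcup C$; by Lemma~\ref{LemmaTriple}, such a product integrates to $1$ if all three parts are faces of $K$ and $0$ otherwise.

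Then I would match the combinatorics: the parts $A$ and $B$ correspond to $I \cup \{n-1\}$ and $J \cup \{n\}$ (or the mirror with $n-1$ and $n$ swapped), while the remaining element $n-2$ (or an appropriate singleton) provides the third face $C$. Tracking the $(-z)^c$ signs in the $(y_i - z)^{d_i}$ expansions together with the binomial coefficients yields $(-1)^N$ with $N = |J| + \sum_{q \in J,\, q \le m} d_q$. The factor $\varepsilon$ records which of the two orientations of $\{n-1, n\}$ places all three triple parts in $K$. The constraint $m+1 \in I$ arises from a symmetry-breaking convention: one checks that the involution $(I, J) \leftrightarrow (J, I)$ flips both $(-1)^N$ (by the odd shift $2n - 5$) and $\varepsilon$, so their product is invariant under the swap, and restricting to $m+1 \in I$ selects one representative per unordered pair.

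\textbf{Main obstacle.} The principal technical difficulty lies in the recursive bookkeeping for $x_i^{d_i}$ with $d_i \geq 2$: the identity $x_i^{d_i} = x_i \psi_i^{d_i-1}$ reduces the self-intersection to a $\psi_i$-factor, but substituting $\psi_i = x_i + y_i - z$ on the right feeds back into the expansion and generates new mixed monomials that must be resolved via the same orthogonalities; ensuring the accumulated signs align with $(-1)^N \varepsilon$ across all these substitutions requires careful tracking. A cleaner but less direct alternative is induction on $n$ via Theorem~\ref{ThmRecursion} applied with $i = 1$, $j = n-1$, $k = n$, which reduces the problem to three intersection numbers on the contractions $K_{(1,n-1)}$, $K_{(1,n)}$, $K_{(n-1,n)}$ (each with $n-1$ vertices), to which the inductive hypothesis applies; the ASD property (exactly one of a face and its complement lies in $K$) then enables the combinatorial matching of partitions.
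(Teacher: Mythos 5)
Your preliminary reductions are correct: with $x_i=(i,n-1)$, $y_i=(i,n)$, $z=(n-1,n)$ one indeed has $x_i(y_i-z)=0$ by Proposition \ref{ComputRules}, hence $\psi_i^{d_i}=x_i^{d_i}+(y_i-z)^{d_i}$; and your observation that swapping $(I,J)\mapsto(J,I)$ changes $N$ by the odd amount $2n-5$ while flipping $\varepsilon$, so that requiring $m+1\in I$ merely picks a representative of each unordered pair, is a correct and useful sanity check of the statement. However, your main route stops exactly where the content of the theorem begins. After the expansion you are left with powers $x_i^{d_i}$, $y_i^{d_i}$ and, crucially, accumulated powers of the single cycle $z$ coming from all factors $(y_i-z)^{d_i}$ at once; the identity $x_i^{d}=x_i\psi_i^{d-1}$ merely reinserts $\psi_i$ into the expansion (it is essentially circular, since likewise $(y_i-z)^{d}=(y_i-z)\psi_i^{d-1}$), and you give no rule for reducing $z^{c}$ or mixed products such as $y_i^{a}z^{b}\prod_{j\neq i}y_j^{a_j}$. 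Moreover, it is not true that every top-degree summand is a triple-partition cycle evaluable by Lemma \ref{LemmaTriple}: top-degree perfect cycles also arise with one or two blocks (for instance $(A)$ with $|A|=n-2$), and these must be evaluated and matched against the sign $(-1)^{N}\varepsilon$ as well. This bookkeeping, which you yourself flag as the ``main obstacle,'' is precisely the assertion to be proved, so as written the main argument has a genuine gap.

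Your fallback alternative is in fact the paper's actual proof: induction on $n$ via the recursion of Theorem \ref{ThmRecursion}, with the base case $n=4$ (only two ASD complexes, both threshold) checked by hand, and an induction step consisting of a case analysis over the positions of the chosen indices $i,j,k$ relative to $I$ and $J$, verifying that each partition of $[n-2]$ contributes the same amount to both sides; one tracks how $N$ and $\varepsilon$ behave under the contractions $K_{(ij)}$, $K_{(ik)}$, $K_{(jk)}$, and uses that a summand with $(ij)\notin K$ is set to zero. So you identified the right strategy but did not execute it: to turn the proposal into a proof you would need the base case, the explicit matching of each partition's contribution under the three contractions (including the ill-defined ones), and the verification that the contracted partitions are exactly those counted by the formula for the complexes on $n-1$ vertices.
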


\textit{Proof} goes by induction.

Although \textbf{the base} is trivial, let us look at it. The smallest $n$ which makes sense is $n=4$. There exist two ASD complexes with four vertices, both are threshold. So there exist two types of fine moduli compactifications, both correspond to the configuration spaces of some flexible four-gon.
The top monomials are the first powers of the $\psi$-classes.

\begin{enumerate}
  \item For $l_1=1;\ l_2=1;\ l_3=1;\ l_4=0,1$  we have
 $\psi_1=\psi_2=\psi_3=0$,  and \ \ $\psi_4=2$.

Let us prove that the theorem holds for the monomial  $\psi_1$.
There are two partitions of \newline $[n-2]~=~[2]$: 
\begin{enumerate}
\item $J=\{1\},\ I=\{2\}$.
Here $\varepsilon =0$, so this partition contributes $0$. 
\item  $J=\emptyset,\ I=\{1,2\}$. 
Here $I\notin K $, so this partition also contributes $0$. 
\end{enumerate}

   \item For $l_1=2,9;\ l_2=1;\ l_3=1;\ l_4=1$, we have
  $\psi_2=\psi_3=\psi_4=1$, and \ \ $\psi_1=-1$.
Let us check that the theorem holds for the monomial  $\psi_1$. (The other monomials are checked in a similar way.)
There  partitions of $[2]$ are the same: 
\begin{enumerate}
\item $J=\{1\},\ I=\{2\}$.
Here $\varepsilon =-1, \ N=1+1$, so this partition contributes $-1$. 
\item  $J=\emptyset,\ I=\{1,2\}$. 
Here $I\notin K $, so it contributes $0$.
\end{enumerate}

\end{enumerate}

\medskip
For the \textbf{induction step}, let us use the recursion. We shall show  that for any partition $[n-2]=I\cup J$, its contribution to the left hand side and the right hand side of the recursion are equal.

This is done through a case analysis. We present here three cases; the rest are analogous.

\begin{enumerate}
\item Assume that $i,j,k \in I$, and $(I,J)$ contributes $1$ to the left hand side count. Then

\begin{itemize}
\item[$\triangleright$] $(d_{1}, \dots, d_{i}+d_{j}-1, \dots, \hat{d_{j}}, \dots, d_{n})_{K_{(ij)}}$
contributes $1$ to the right hand side. 

Indeed, neither $N$, nor $\varepsilon$ changes  when we pass from $K$ to $K_{(ij)}$.

\item[$\triangleright$] $(d_{1}, \dots, d_{i}+d_{k}-1, \dots, \hat{d_{k}}, \dots, d_{n})_{K_{(ik)}} $ contributes $1$, and

\item[$\triangleright$] $-(d_{1}, \dots, d_{i}-1 ,\dots, d_{j}+d_{k}, \dots, \hat{d_{j}}, \hat{d_{k}},\dots, d_{n})_{K_{(jk)}}$ contributes $-1$.
\end{itemize}

\item Assume that $i\in I,\ j,k \in J$, and $(I,J)$ contributes $1$ to the left hand side count. Then

\begin{itemize}
\item[$\triangleright$] $(d_{1}, \dots, d_{i}+d_{j}-1, \dots, \hat{d_{j}}, \dots, d_{n})_{K_{(ij)}}$
contributes $0$ to the right hand side. 

\item[$\triangleright$] $(d_{1}, \dots, d_{i}+d_{k}-1, \dots, \hat{d_{k}}, \dots, d_{n})_{K_{(ik)}} $ contributes $0$, and

\item[$\triangleright$] $-(d_{1}, \dots, d_{i}-1 ,\dots, d_{j}+d_{k}, \dots, \hat{d_{j}}, \hat{d_{k}},\dots, d_{n})_{K_{(jk)}}$ contributes $1$.

Indeed, $N$ turns to $N-1$, whereas $\varepsilon$ stays the same.
\end{itemize}

\item  Assume that $i\in J,\ j,k \in I$, and $(I,J)$ contributes $1$ to the left hand side count. Then

\begin{itemize}
\item[$\triangleright$] $(d_{1}, \dots, d_{i}+d_{j}-1, \dots, \hat{d_{j}}, \dots, d_{n})_{K_{(ij)}}$
contributes $0$. 

\item[$\triangleright$] $(d_{1}, \dots, d_{i}+d_{k}-1, \dots, \hat{d_{k}}, \dots, d_{n})_{K_{(ik)}} $ contributes $0$, and

\item[$\triangleright$] $-(d_{1}, \dots, d_{i}-1 ,\dots, d_{j}+d_{k}, \dots, \hat{d_{j}}, \hat{d_{k}},\dots, d_{n})_{K_{(jk)}}$ contributes $1$, since $N$ turns to $N-1$, and $\varepsilon$ stays the same.\qed
\end{itemize}\end{enumerate} 
  
This theorem was proven for polygon spaces (that is, for threshold ASD complexes) in \cite{Agapito}.

\section{Appendix. Chow rings and blow ups}

Assume we have a diagram of a blow up $\widetilde{Y}:={\bf bl}_{X}(Y)$. Here  $X$ and $Y$ are smooth varieties, $\iota: X \hookrightarrow Y$ is a regular embedding, and  $\widetilde{X}$ is the exceptional divisor. In this case, $\iota^{*}: A^*(Y) \rightarrow A^*(X)$ is surjective. 
\begin{center}
\begin{tikzcd}
\widetilde{X} \arrow{d}{\tau} \arrow[r, hookrightarrow, "\theta"] & \widetilde{Y} \arrow{d}{\pi} \\
X  \arrow[r, hookrightarrow, "\iota"] & Y
\end{tikzcd}
\end{center}
Denote by $E$  the relative normal bundle $$E:= \tau^* N_X Y/ N_{\widetilde{X}} \widetilde{Y}.$$

\begin{theorem}\cite[Appendix. Theorem 1]{Keel92}\label{NewChowRing}
The Chow ring $A^{*}(\widetilde{Y})$ is isomorphic to
$$\frac{A^{*}(Y)[T]}{(P(T), T\cdot \ker i^*)},$$
where $P(T) \in A^*(Y)[T]$ is the pullback from $A^*(X)[T]$ of Chern polynomial of the normal bundle $N_{X}Y$. This isomorphism is induced by $\pi^* : A^*(Y)[T] \rightarrow A^* (\widetilde{Y})$ which sends $-T$ to the class of the exceptional divisor  $\widetilde{X}$.
\end{theorem}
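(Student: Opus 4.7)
Denote the codimension of $X$ in $Y$ by $d$, so that $P(T)$ has degree $d$ in $T$. Let $\zeta = -\theta^{*}[\widetilde{X}] \in A^{*}(\widetilde{X})$, which is the first Chern class of $\mathcal{O}_{\mathbb{P}(N_{X}Y)}(1)$ since $\mathcal{O}(\widetilde{X})|_{\widetilde{X}} = \mathcal{O}_{\widetilde{X}}(-1)$. My plan is to construct the ring map $\phi: A^{*}(Y)[T]\to A^{*}(\widetilde{Y})$, $T\mapsto -[\widetilde{X}]$, verify that the two listed relations lie in $\ker\phi$, and then produce an explicit inverse using the classical blow-up decomposition of $A^{*}(\widetilde{Y})$.

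First I would verify that the relations hold. For $P(T)$: pulling back along $\theta$ sends $\phi(P(T))$ to $P(\zeta)$ evaluated in $A^{*}(\widetilde{X})$. By the projective bundle relation for $\widetilde{X}=\mathbb{P}(N_{X}Y)\to X$, this is exactly zero. The projection formula then gives $\phi(P(T)) = \theta_{*}(P(\zeta)\cdot \text{something}) = 0$; more concretely, one writes $[\widetilde{X}]^{d}$ as $\theta_{*}(\zeta^{d-1})$ and uses the identity $\zeta^{d} = \tau^{*}(c_{1}(N_{X}Y))\cdot \zeta^{d-1} - \cdots$. For the relation $T\cdot\ker i^{*}$: given $\alpha\in A^{*}(Y)$ with $\iota^{*}\alpha=0$, the projection formula gives $[\widetilde{X}]\cdot \pi^{*}\alpha = \theta_{*}\theta^{*}\pi^{*}\alpha = \theta_{*}\tau^{*}\iota^{*}\alpha = 0$. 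Thus $\phi$ descends to $\bar{\phi}: A^{*}(Y)[T]/(P(T),\, T\cdot\ker i^{*}) \to A^{*}(\widetilde{Y})$.

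Next I would establish surjectivity of $\bar\phi$. By the classical blow-up formula (see e.g.\ Fulton, Ch.\ 6), every class on $\widetilde{Y}$ is a sum of a class $\pi^{*}\beta$ with $\beta\in A^{*}(Y)$ and classes of the form $\theta_{*}(\zeta^{k}\cdot\tau^{*}\gamma)$ with $\gamma\in A^{*}(X)$ and $0\le k\le d-2$. Because $\iota^{*}$ is surjective (this is the standing hypothesis), one may lift each such $\gamma$ to some $\tilde\gamma\in A^{*}(Y)$. Then the projection formula rewrites $\theta_{*}(\zeta^{k}\cdot\tau^{*}\iota^{*}\tilde\gamma) = \theta_{*}(\zeta^{k})\cdot\pi^{*}\tilde\gamma = \pm[\widetilde{X}]^{k+1}\cdot\pi^{*}\tilde\gamma$, which is manifestly in the image of $\phi$.

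The main obstacle is \emph{injectivity}. Here I would argue via matching $A^{*}(Y)$-module decompositions. On the algebraic side, since $P(T)$ is monic of degree $d$ and the relation $T\cdot\ker i^{*}$ only constrains coefficients of $T^{k}$ for $k\ge 1$, one shows that
\begin{equation*}
A^{*}(Y)[T]/(P(T),\, T\cdot\ker i^{*}) \;\cong\; A^{*}(Y)\;\oplus\;\bigoplus_{k=1}^{d-1} A^{*}(X)\cdot T^{k}
\end{equation*}
as a graded abelian group, using $A^{*}(Y)/\ker i^{*}\cong A^{*}(X)$ (surjectivity of $\iota^{*}$). On the geometric side, the blow-up formula gives the analogous direct sum decomposition
\begin{equation*}
A^{*}(\widetilde{Y}) \;\cong\; \pi^{*}A^{*}(Y)\;\oplus\;\bigoplus_{k=1}^{d-1}\theta_{*}\bigl(\zeta^{k-1}\cdot\tau^{*}A^{*}(X)\bigr).
\end{equation*}
The delicate point is to check that $\bar\phi$ carries the $k$-th summand on the left precisely to the $k$-th summand on the right. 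For the $k=0$ piece this is tautological ($\pi^{*}$ is injective because $\pi_{*}\pi^{*}=\mathrm{id}$). For $k\ge 1$, I would use the identity $[\widetilde{X}]^{k}\cdot\pi^{*}\beta = (-1)^{k-1}\theta_{*}(\zeta^{k-1}\cdot\tau^{*}\iota^{*}\beta)$ obtained by iterating the projection formula; this shows $\bar\phi$ is a bijection between the summands of degree $k\ge 1$ on both sides, since both are parametrized by $A^{*}(X)$ via the same formula. Combining these bijections completes the injectivity argument and the proof.
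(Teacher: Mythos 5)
The paper itself gives no proof of this statement: it is imported verbatim from the appendix of \cite{Keel92} as background material, so the only thing to compare your proposal with is Keel's own argument, which your outline essentially reproduces (same map $T\mapsto-[\widetilde{X}]$, same split exact sequence and projective-bundle decomposition, same matching of summands). The surjectivity step and the injectivity step via the two direct-sum decompositions are correct in substance.

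There is, however, a genuine gap, and it sits at the one subtle point of the theorem: your verification that $P(T)\in\ker\phi$. The asserted identity $\phi(P(T))=\theta_{*}\bigl(P(\zeta)\cdot\text{something}\bigr)$ is not available, because the constant term of $P(-[\widetilde{X}])$ is $\pi^{*}c_{d}$ (where $c_{d}\in A^{d}(Y)$ is the chosen lift of $c_{d}(N_{X}Y)$), and that class is not a pushforward from the exceptional divisor; the projection formula only converts the terms carrying a positive power of $[\widetilde{X}]$. Carrying the computation out with $[\widetilde{X}]^{k}=(-1)^{k-1}\theta_{*}(\zeta^{k-1})$ and the key formula $\pi^{*}\iota_{*}(1)=\theta_{*}(c_{d-1}(E))$ (Theorem \ref{SESBlow}(a)) yields
$$P(-[\widetilde{X}])=\pi^{*}\bigl(c_{d}-[X]\bigr),$$
which, since $\pi^{*}$ is injective, vanishes if and only if the constant term of $P$ equals the class $[X]$ of the center. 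This hypothesis is explicit in Keel's statement (and silently satisfied in the body of the paper, where the $f_{A}$ have constant term $(A)$), but it does not follow from ``$P$ lifts the Chern polynomial'': for $Y=\mathbb{P}^{2}$ blown up at a point, $P(T)=T^{2}$ lifts the Chern polynomial of the trivial normal bundle, yet $T^{2}\mapsto[\widetilde{X}]^{2}=-\pi^{*}[\mathrm{pt}]\neq 0$, so the quotient ring is genuinely wrong for that choice. Your argument as written would ``prove'' this false variant, which is how one knows the step is flawed. The same hypothesis is also what makes your algebraic decomposition correct: one needs $\ker(\iota^{*})\cdot[X]=\iota_{*}\bigl(\iota^{*}\ker(\iota^{*})\bigr)=0$ to see that the coefficient of $T^{0}$ survives as all of $A^{*}(Y)$ rather than a quotient of it. With the constant-term hypothesis added and the relation $P(T)\mapsto 0$ reproved via the key formula, the rest of your proposal goes through.
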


\begin{theorem}\cite[Proposition 6.7]{FulInter}\label{SESBlow} Let $k \in \mathbb{N}$.
\begin{itemize}
\item[a)](Key formula) For all $x \in A_{k}(X)$
$$\pi^* \iota_{*} (x) = \theta_* (c_{d-1}(E)\cap \tau^* x)$$
\item[e)] There are split exact sequences
$$0 \rightarrow A_{k} X \xrightarrow{\upsilon} A_{k} \widetilde{X} \oplus A_{k} Y \xrightarrow{\eta} A_{k} \widetilde{Y} \rightarrow 0$$
with $\upsilon(x) = \big( c_{d-1}(E)\cap \tau^{*}x, -\iota_{*}x \big)$, and $\eta(\tilde{x}, y) = \theta_{*}\tilde{x}+ \pi^* y$. A left inverse for $\upsilon$ is given by $(\tilde{x}, y) \mapsto \tau_{*}(\tilde{x})$.
\end{itemize}
\end{theorem}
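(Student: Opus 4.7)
The strategy is to deduce both parts from Fulton's refined intersection theory by exploiting the specific structure of the blow-up square. Three structural facts will be used throughout: (i) the diagram is Cartesian with $\widetilde{X}=\pi^{-1}(X)$ the exceptional divisor; (ii) $\iota$ is a regular embedding of codimension $d$ with normal bundle $N_{X}Y$, and $\theta$ is a regular embedding of codimension $1$ fitting into the tautological sequence $0\to N_{\widetilde{X}}\widetilde{Y}\to\tau^{*}N_{X}Y\to E\to 0$; (iii) $\tau\colon\widetilde{X}\to X$ is the projective bundle $\mathbb{P}(N_{X}Y)$. The single Chern-class identity I will invoke repeatedly is $\tau_{*}c_{d-1}(E)=1$ in $A^{0}(X)$, which falls out of the projective bundle formula combined with $s(N_{X}Y)\,c(N_{X}Y)=1$.

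For part (a), I would apply the excess intersection formula for the Cartesian square. In this setup the excess bundle is exactly $E$, of rank $d-1$, and the general formula yields $\iota^{!}\pi^{*}=c_{d-1}(E)\cap\tau^{*}(-)$ as operators from $A_{*}X$ to $A_{*}\widetilde{X}$. Since $\pi^{*}\iota_{*}x$ is set-theoretically supported on $\widetilde{X}$ (because $\iota_{*}x$ is supported on $X$ and $\pi^{-1}(X)=\widetilde{X}$), it equals $\theta_{*}\tilde{x}$ for a unique $\tilde{x}$, and the refined Gysin identity pins down $\tilde{x}=c_{d-1}(E)\cap\tau^{*}x$. A self-contained alternative is deformation to the normal cone of $\iota$, which transports the identity to an explicit projective-bundle calculation on $\mathbb{P}(N_{X}Y\oplus\mathcal{O})$; both routes rely on the same underlying Segre-class manipulation.

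For part (e) I would proceed in three steps. Surjectivity of $\eta$ comes from localization: since $\pi$ restricts to an isomorphism $\widetilde{Y}\setminus\widetilde{X}\xrightarrow{\sim}Y\setminus X$, any class in $A_{k}\widetilde{Y}$ agrees modulo the image of $\theta_{*}$ with the $\pi^{*}$-pullback of a lift of its restriction through the surjection $A_{k}Y\twoheadrightarrow A_{k}(Y\setminus X)$. Injectivity of $\upsilon$ is immediate from the claimed left inverse: by the projection formula, $\tau_{*}(c_{d-1}(E)\cap\tau^{*}x)=(\tau_{*}c_{d-1}(E))\cdot x=x$. Exactness in the middle splits into two directions: $\eta\circ\upsilon=0$ is \emph{exactly} the content of part (a), since $\theta_{*}(c_{d-1}(E)\cap\tau^{*}x)-\pi^{*}\iota_{*}x=0$. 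Conversely, given $(\tilde{x},y)\in\ker\eta$, restricting $\theta_{*}\tilde{x}+\pi^{*}y=0$ to $\widetilde{Y}\setminus\widetilde{X}$ and applying the localization sequence on $Y$ forces $y=\iota_{*}x'$ for some $x'\in A_{k}X$; substituting back and using part (a) reduces the residual condition to a statement about $\ker\theta_{*}$, which one resolves by applying $\tau_{*}$ and using the projective-bundle structure of $\tau$ to conclude that the residual class lies in $\upsilon(A_{k}X)$.

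The main obstacle is unquestionably the key formula (a), which genuinely requires the deformation-to-the-normal-cone construction (equivalently the excess intersection formula for regular embeddings): one cannot avoid grappling with refined Gysin pullbacks. Once (a) is in hand, the content of (e) is largely formal, depending only on localization in Chow groups, the projection formula, and the projective-bundle identity $\tau_{*}c_{d-1}(E)=1$; the most delicate bookkeeping is the diagram chase for middle exactness, where care must be taken that the class produced from $\tau_{*}\tilde{x}$ lifts correctly through $\upsilon$.
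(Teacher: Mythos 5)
The paper gives no proof of this statement: it is quoted verbatim from Fulton's \emph{Intersection Theory} (Proposition 6.7) as imported background in the Appendix, so there is nothing internal to compare against. Your outline correctly reconstructs Fulton's own argument --- part (a) via the excess intersection formula for the blow-up square with excess bundle $E$ of rank $d-1$, and part (e) formally from (a) together with the localization sequence, the projection formula, and the identity $\tau_{*}c_{d-1}(E)=1$ coming from the projective bundle structure of $\widetilde{X}=\mathbb{P}(N_{X}Y)$ --- so it is the same (and the standard) route.
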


\begin{theorem}\cite[Theorem 6.7, Corollary 6.7.2]{FulInter}\label{CorCodim} $\;$
\begin{enumerate}
\item (Blow-up Formula) Let $V$ be a $k$-dimensional subvariety of $Y$, and let $\tilde{V} \subset \tilde{Y}$ be the proper transform of $V$, i.e., the blow-up of $V$ along $V\cap X$. Then $$\pi^{*}[V] = [\tilde{V}] + j_{*}\{c(E)\cap \tau^{*} s(V\cap X, V)\}_{k} \text{ in }A_{k}\tilde{Y}.$$
\item If $\dim V\cap X \leq k-d$, then $\pi^* [V] = [\tilde{V}]$.
\end{enumerate}
\end{theorem}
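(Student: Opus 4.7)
The plan is to reduce part (2) to part (1) by an elementary dimension count, and then prove part (1) by following Fulton's strategy: isolate the contribution of $\pi^{*}[V]$ supported on the exceptional divisor and identify it via deformation to the normal cone. Intrinsically, the proper transform is $\tilde{V} = \mathbf{bl}_{V \cap X}(V)$: the universal property of the blow-up produces a map $\tilde{V} \to \tilde{Y}$ which is a closed immersion onto the closure of $\pi^{-1}(V \setminus X)$, and whose exceptional divisor is $\tilde{V} \cap \tilde{X} = \mathbb{P}(C_{V \cap X} V)$, the projectivized normal cone of $V \cap X$ in $V$.

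For part (1), since $\pi$ restricts to an isomorphism over $Y \setminus X$ and $\tilde{V} \to V$ restricts to the identity there, the difference $\pi^{*}[V] - [\tilde{V}]$ vanishes on the open complement of $\tilde{X}$. By the localization sequence in Chow theory, it lifts to a class $\eta \in A_{k}(\tilde{X})$, and we must show
\[
\eta = \bigl\{ c(E) \cap \tau^{*} s(V \cap X, V) \bigr\}_{k}.
\]
I would compute $\eta$ by deformation to the normal cone applied to the inclusion $V \cap X \hookrightarrow V$. The cycle $\pi^{*}[V]$ specializes to the class of the normal cone $C_{V \cap X} V$, which sits inside the restriction of the ambient normal bundle $\tau^{*} N_{X} Y|_{V \cap X}$. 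Projectivizing inside $\tilde{X} = \mathbb{P}(N_{X} Y)$, the piece of $\pi^{*}[V]$ supported on $\tilde{X}$ is exactly the projective cone $\mathbb{P}(C_{V \cap X} V)$ pushed into $\tilde{X}$, but weighted by the Chern classes of the quotient bundle $E$ because of the short exact sequence
\[
0 \to N_{\tilde{X}} \tilde{Y} \to \tau^{*} N_{X} Y \to E \to 0.
\]
Standard Segre-class manipulations (Fulton, Proposition~4.1) then convert this cone-theoretic expression into the Segre class $s(V \cap X, V)$ on the base, yielding the stated formula after capping with $c(E)$ and extracting the degree-$k$ component. An equivalent route is to verify the formula against the key formula of Theorem~\ref{SESBlow}(a) on a basis of classes supported on $X$ and then bootstrap via the split short exact sequence in Theorem~\ref{SESBlow}(e).

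For part (2), the Segre class $s(V \cap X, V)$ has components in dimensions at most $\dim(V \cap X) \leq k - d$. Pulling back by the projection $\tau \colon \tilde{X} \to X$, which has relative dimension $d - 1$, produces classes of dimension at most $k - 1$. Capping with $c(E)$, whose homogeneous components are of codimension $0, 1, \dots, d-1$, cannot raise dimension, so the degree-$k$ part of $c(E) \cap \tau^{*} s(V \cap X, V)$ is zero, and $\pi^{*}[V] = [\tilde{V}]$.

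The main obstacle is Step~3: rigorously identifying the correction term $\eta$ with $\{c(E) \cap \tau^{*} s(V \cap X, V)\}_{k}$. This is the technical heart of the blow-up formula, and the cleanest execution goes through deformation to the normal cone together with the compatibility of Segre classes under the subcone inclusion $C_{V \cap X} V \hookrightarrow \tau^{*} N_{X} Y$; the book-keeping for the Whitney/excess factor $c(E)$ is where the argument is most delicate. Since the statement is a direct invocation of Fulton's Theorem~6.7 and Corollary~6.7.2, in a self-contained write-up I would either refer to these directly or reproduce the deformation-to-the-normal-cone argument from his §5--§6.
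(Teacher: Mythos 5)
This statement is imported verbatim from Fulton's \emph{Intersection Theory} (Theorem 6.7 and Corollary 6.7.2); the paper supplies no proof of its own, so there is nothing internal to compare against — the only question is whether your sketch is a sound rendition of the standard argument, and it essentially is. Your reduction of part (2) to part (1) is complete and correct: $s(V\cap X,V)$ is supported in dimensions at most $\dim(V\cap X)\le k-d$, the flat pullback along $\tau$ raises dimension by exactly $d-1$, and capping with $c(E)$ cannot raise dimension, so the degree-$k$ component of the correction term vanishes. For part (1), your outline (proper transform as the blow-up of $V$ along $V\cap X$ with exceptional divisor $\mathbb{P}(C_{V\cap X}V)$, localization of $\pi^{*}[V]-[\tilde V]$ to a class supported on $\tilde X$, identification of that class via deformation to the normal cone and the excess bundle $E$) is faithful to Fulton's proof. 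Two caveats worth flagging: the localization sequence only tells you the difference is \emph{some} pushforward from $A_{k}(\tilde X)$, not which one, so the deformation-to-the-normal-cone computation is not optional bookkeeping but the entire content of the theorem — your own "Step 3" admission is accurate; and the phrase "weighted by the Chern classes of $E$" elides the point that the excess factor arises because $C_{V\cap X}V$ generally has smaller dimension than $\tau^{*}N_{X}Y$ restricted over it, which is precisely where $c(E)$ enters via the excess intersection formula. Since the paper itself treats this as a black box, deferring those details to Fulton, as you do, is consistent with its usage here.
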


An algebraic variety $Z$ is a \textit{HI--scheme} if the canonical map $\mathrm{cl}: A^{*}(Z) \rightarrow H^{*}(Z, \mathbb{Z})$ is an isomorphism.

\begin{theorem}\cite[Appendix. Theorem 2]{Keel92}\label{ChowCoh}
\begin{itemize}
\item If $X, \widetilde{X}$, and $Y$ are HI, then so is $\widetilde{Y}$.

\item If $X, \widetilde{X}$, and $\widetilde{Y}$ are HI, then so is $Y$.
\end{itemize}
\end{theorem}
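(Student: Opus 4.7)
The plan is to reduce the statement to a diagram chase via the five lemma, by comparing the split short exact sequence for Chow groups from Theorem \ref{SESBlow}(e) with its cohomological counterpart, intertwined by the cycle class map.

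First, I would establish the analogous topological blow-up sequence: for the same smooth blow-up square, the singular cohomology groups with integer coefficients fit into a split short exact sequence
\begin{equation*}
0 \to H^{*}(X) \xrightarrow{\upsilon^{\mathrm{top}}} H^{*}(\widetilde{X}) \oplus H^{*}(Y) \xrightarrow{\eta^{\mathrm{top}}} H^{*}(\widetilde{Y}) \to 0,
\end{equation*}
with $\upsilon^{\mathrm{top}}(x) = (c_{d-1}(E) \smile \tau^* x,\, -\iota_* x)$ and $\eta^{\mathrm{top}}(\tilde{x}, y) = \theta_* \tilde{x} + \pi^* y$, and a section given by $(\tilde{x}, y) \mapsto \tau_* \tilde{x}$. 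This is the classical topological blow-up formula for a smooth center, obtainable by combining the projective bundle formula for $\widetilde{X} = \mathbb{P}(N_X Y) \to X$ with the relative long exact sequence of the pair $(\widetilde{Y}, \widetilde{X})$ and the excision identification $\widetilde{Y} \setminus \widetilde{X} \cong Y \setminus X$.

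Second, I would observe that the cycle class map is functorial with respect to proper pushforward ($\iota_*$, $\theta_*$), flat pullback ($\pi^*$, $\tau^*$), and cup product with Chern classes. Hence it intertwines the two sequences and produces a commutative diagram with exact rows:
\begin{center}
\begin{tikzcd}[column sep=small]
0 \arrow[r] & A^{*}(X) \arrow[r] \arrow[d,"\mathrm{cl}_X"] & A^{*}(\widetilde{X}) \oplus A^{*}(Y) \arrow[r] \arrow[d,"\mathrm{cl}_{\widetilde{X}} \oplus \mathrm{cl}_Y"] & A^{*}(\widetilde{Y}) \arrow[r] \arrow[d,"\mathrm{cl}_{\widetilde{Y}}"] & 0 \\
0 \arrow[r] & H^{*}(X) \arrow[r] & H^{*}(\widetilde{X}) \oplus H^{*}(Y) \arrow[r] & H^{*}(\widetilde{Y}) \arrow[r] & 0
\end{tikzcd}
\end{center}

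Third, both implications follow from the short five lemma. For part (i), the HI hypothesis on $X$, $\widetilde{X}$, and $Y$ makes the left and middle vertical arrows isomorphisms, so the right vertical $\mathrm{cl}_{\widetilde{Y}}$ is an isomorphism. For part (ii), the hypotheses make the left and right vertical arrows isomorphisms; the five lemma then forces the middle arrow $\mathrm{cl}_{\widetilde{X}} \oplus \mathrm{cl}_Y$ to be an isomorphism. Since its first direct summand $\mathrm{cl}_{\widetilde{X}}$ is already assumed to be an isomorphism, the complementary summand $\mathrm{cl}_Y$ must also be an isomorphism (injectivity and surjectivity of $\mathrm{cl}_Y$ are inherited directly from those of the direct sum).

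The main technical obstacle is step one — setting up the integral cohomological blow-up sequence with maps matching those of the Chow-theoretic statement. Fulton's proof of Theorem \ref{SESBlow} in \cite{FulInter} uses cycle-theoretic manipulations that do not translate verbatim to singular cohomology, but the relevant maps and the splitting can be reconstructed from the projective bundle formula for $\widetilde{X} \to X$ combined with the standard topological blow-up computation. Once that sequence is in hand, the remaining diagram chase is routine.
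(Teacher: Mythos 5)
Your proposal is correct and is essentially the standard argument of Keel's appendix (which the paper simply cites without reproducing a proof): compare the Chow-group blow-up exact sequence of Theorem \ref{SESBlow}(e) with its singular-cohomology analogue via the cycle class map and conclude by the five lemma, plus the observation that a block-diagonal map $\mathrm{cl}_{\widetilde{X}}\oplus \mathrm{cl}_{Y}$ is an isomorphism only if each block is. Only cosmetic slips: the topological sequence comes from the pairs $(\widetilde{Y},\widetilde{Y}\setminus\widetilde{X})$ and $(Y,Y\setminus X)$ rather than $(\widetilde{Y},\widetilde{X})$, and $\pi$, being a blow-up, is not flat, but the cycle class map is still compatible with these pullbacks since all varieties involved are smooth.
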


\vspace{1cm}

\textbf{Acknowledgement.} This research is supported by the Russian Science Foundation under grant 16-11-10039. 

\end{document}